\newtheorem{thm}{Theorem}[section]
\newtheorem{prp}[thm]{Proposition}
\newtheorem{cor}[thm]{Corollary}
\newtheorem{lem}[thm]{Lemma}
\theoremstyle{definition}
\newtheorem{ntn}[thm]{Notation}
\theoremstyle{remark}
\newtheorem{rmk}[thm]{Remark}
\numberwithin{equation}{section}
\DeclareMathOperator{\im}{im}
\newcommand{\NN}{\mathbb{N}}
\newcommand{\TT}{\mathbb{T}}
\newcommand{\ZZ}{\mathbb{Z}}
\newcommand{\Gg}{\mathcal{G}}
\newcommand{\Bb}{\mathcal{B}}
\newcommand\Ff{\mathcal{F}}
\newcommand{\Hh}{\mathcal{H}}
\newcommand\Ll{\mathcal{L}}
\newcommand{\Kk}{\mathcal{K}}
\newcommand{\Tt}{\mathcal{T}}
\newcommand{\Aut}{\operatorname{Aut}}
\newcommand{\dom}{\operatorname{dom}}
\newcommand{\id}{\operatorname{id}}
\newcommand{\lsp}{\operatorname{span}}
\newcommand{\clsp}{\overline{\lsp}}
\newcommand{\supp}{\operatorname{supp}}
\newcommand{\cs}{C^*}
\newcommand{\ginf}{G^{\infty}_\alpha}
\newcommand{\tginf}{\widetilde{G}^{\infty}_\alpha}
\newcommand{\ginfo}{(G^{\infty}_\alpha)^{(0)}}
\newcommand{\inv}{^{-1}}
\newcommand{\OO}{\mathcal{O}}
\newcommand{\go}{G^{(0)}}
\newcommand{\hio}{H_\infty^{(0)}}
\newcommand{\iotaG}{\iota_G}
\newcommand{\defginf}{$\ginf$ the groupoid \eqref{eq:ginf}}
\title[Kirchberg algebras from amenable principal groupoids]{Purely infinite simple $C^*$-algebras that are principal groupoid $\cs$-algebras}
\author{Jonathan H. Brown}
\address[J.H. Brown]{Department of Mathematics\\
University of Dayton\\
300 College Park Dayton\\
OH 45469-2316 USA}
\email[J.H. Brown]{jonathan.henry.brown@gmail.com}
\author{Lisa Orloff Clark}
\address[L.O. Clark]{Department of Mathematics and Statistics\\
    University of Otago\\
    PO Box 56\\
    Dunedin 9054\\
    New Zealand}
\email[L.O. Clark]{lclark@maths.otago.ac.nz}
\author{Adam Sierakowski}
\address[A. Sierakowski]{School of Mathematics and Applied Statistics\\
University of Wollongong\\
NSW 2522\\ AUSTRALIA}
\email[A. Sierakowski]{asierako@uow.edu.au}
\author{Aidan Sims}
\address[A. Sims]{School of Mathematics and Applied Statistics\\
University of Wollongong\\
NSW 2522\\ AUSTRALIA}
\email[A. Sims]{asims@uow.edu.au}
\subjclass[2010]{46L05}
\keywords{Groupoid; Kirchberg algebra}
\thanks{This research was supported by the Australian Research Council.}
\date{\today}
\begin{document}

\begin{abstract}
From a suitable groupoid $G$, we show how to construct an amenable principal groupoid
whose $C^*$-algebra is a Kirchberg algebra which is $KK$-equivalent to $C^*(G)$. Using
this construction, we show by example that many UCT Kirchberg algebras can be realised as
the $C^*$-algebras of amenable principal groupoids.
\end{abstract}

\maketitle

\section{Introduction}

Concrete models of Kirchberg algebras have proved extremely useful in classification
theory---see for example \cite{EllSie, RSS, spielberg}.  In this paper, we develop a
technique for realising many Kirchberg algebras as the $\cs$-algebras of amenable
principal groupoids.

A priori, it is not clear that any Kirchberg algebra should admit such a model.
Algebraically, a principal groupoid is just an equivalence relation; so in spirit at
least, the $C^*$-algebras of amenable principal groupoids are akin to matrix algebras.
Also, most of the existing groupoid models for Kirchberg algebras are based on graphs and
their analogues \cite{Katsura, RSS, spielberg, Szymanski}, and are not principal. But
recent work of the third-named author with R{\o}rdam shows that there are indeed examples
of amenable principal groupoids whose $C^*$-algebras are Kirchberg algebras:
\cite[Theorem~6.11]{RorSie} shows that every nonamenable exact group admits a free and
amenable action on the Cantor set for which the associated crossed-product is a Kirchberg
algebra. Because the action is free, the corresponding transformation group groupoid is
principal, and because the action is amenable, the corresponding groupoid is also
amenable. Subsequent work of the third-named author with Elliott \cite{EllSie} shows that
more than one Kirchberg algebra is achievable via this construction, but a complete range
result has yet to be established. Suzuki also shows in \cite{Suzuki:JraM14} that many
Kirchberg algebras can be constructed as crossed products arising from Cantor minimal
systems.

Here we investigate a range of Kirchberg algebras that can be modelled using amenable
principal groupoids.  Our approach is as follows: We start out with a principal groupoid
$G$ such that $\cs(G)$ is simple and the $K$-theory of $\cs(G)$ is known.  Then, given an
appropriate automorphism $\alpha$ of $G$, we construct a \emph{twisted product groupoid}
$\ginf$ and prove that $\ginf$ is principal and  amenable when $G$ is, and that
$\cs(\ginf)$ is a Kirchberg algebra. We then show that $\cs(G)$ embeds into $\cs(\ginf)$
via a homomorphism that induces a $KK$-equivalence, and in particular induces an
isomorphism in $K$-theory.

By applying the Kirchberg--Phillips classification theorem, we therefore reduce the
question of which Kirchberg algebras have amenable principal groupoid models to the
question of which pairs of $K$-groups can be obtained from simple $C^*$-algebras of
amenable principal groupoids $G$ that admit a suitable automorphism $\alpha$ (see
Theorem~\ref{thm:main}). By showing that every simple AF algebra, and every simple
A$\TT$-algebra arising from a rank-2 Bratteli diagram as in \cite{PRRS}, admits such a
model, we show that a very broad range of Kirchberg algebras can be modelled by amenable
principal groupoids. In fact, all of these examples can be modelled by amenable principal
groupoids that are ample; that is, \'etale groupoids that have a basis of compact open
bisections.

\smallskip

We outline the necessary background in Section~\ref{sec:prelims}. In
Section~\ref{sec:ginf}, we show how to form a twisted product $H \times_{c,\alpha} G$ of
groupoids $G$ and $H$ given a continuous cocycle $c : H \to \ZZ$ and an automorphism
$\alpha$ of $G$. We then investigate how properties of $G$ and $H$ relate to properties
of $H \times_{c,\alpha} G$. Most of our results are about what happens when $H$ is the
standard groupoid model $H_\infty$ for the Cuntz algebra $\OO_\infty$ and $c : H_\infty
\to \ZZ$ is the canonical cocycle. In this case we write $\ginf$ for $H_\infty
\times_{c,\alpha} G$.

In Section~\ref{sec:toepalgebra}, borrowing an idea from \cite{RenRobSim}, we show that
$\cs(\ginf)$ is isomorphic to the Toeplitz algebra $\Tt_X$ of a $C^*$-correspondence $X$
over $\cs(G)$. As a set, the groupoid $\ginf$ is just the cartesian product $H_\infty
\times G$. We show that the map $f \mapsto 1_{\hio} \times f$ from $C_c(G)$ to
$C_c(\ginf)$ extends to an embedding $\iotaG : \cs(G) \to \cs(\ginf)$. We then show that
the isomorphism $\cs(\ginf) \cong \Tt_X$ intertwines $\iotaG$ and the canonical inclusion
$i_{\cs(G)} : \cs(G) \to \Tt_X$. Combining this with Pimsner's results \cite{Pimsner}, we
deduce that the $KK$-class of $\iotaG$ is a $KK$-equivalence, and in particular,
$K_*(\iotaG)$ is an isomorphism. Though we don't need it for our later results, we also
prove that $\Tt_X$---and hence also $\cs(\ginf)$---coincides with $\OO_X$.

In Section~\ref{sec:structure}, we investigate the structure of $\ginf$. In particular we
identify conditions on $G$ that ensure  that $\cs(\ginf)$ is a Kirchberg algebra. We
summarise our results about $\ginf$ in our main theorem, Theorem~\ref{thm:main}. We
finish Section~\ref{sec:structure} with a slight modification to our construction of
$\ginf$ that shows that the class of Kirchberg algebras achievable via our construction
is closed under stabilisation.

In the last two sections, we provide two classes of examples. In Section~\ref{sec:1-graph
examples}, we show that the graph groupoids $G$ of suitable Bratteli diagrams for simple
AF algebras admit automorphisms satisfying our hypotheses. We deduce that for every
simple dimension group $D \not= \ZZ$ the stable Kirchberg algebra with $K$-theory $(D,
\{0\})$ can be realised as the $C^*$-algebra of an amenable principal groupoid. We show
further that for every element $d$ of the positive cone of $D$, the unital Kirchberg
algebra with the same $K$-theory and with the class of the unit equal to $d$ can also be
constructed in this way. In Section~\ref{sec:2-graph examples}, we apply a similar
analysis to the groupoids associated to the rank-2 Bratteli diagrams of \cite{PRRS}. Thus
for a large class of pairs $(D_0, D_1)$ of simple dimension groups with $D_0 \not=
\ZZ$ and $D_1$ a subgroup of $D_0$ for which the quotient is entirely torsional, the
stable Kirchberg algebra with $K$-theory $(D_0, D_1)$ can be realised as the
$C^*$-algebra of an amenable principal groupoid, and for many possible order units $d$
for $D_0$, the unital Kirchberg algebra with $K$-theoretic data $(D_0, d, D_1)$ can too.

\section{Preliminaries}\label{sec:prelims}

In this section, we collect the background needed for the remainder of the paper.

\subsection{Groupoids and \texorpdfstring{$C^*$}{C*}-algebras}

A groupoid is a small category $G$ with inverses. We denote the collection of identity
morphisms of $G$ by $\go$, and identify it with the object set, so that the domain and
codomain maps become maps $s,r : G \to \go$ such that $r(g)g = g = gs(g)$ for all $g \in
G$. We are interested in topological groupoids: groupoids endowed with a topology under
which inversion from $G$ to $G$ and composition from $\{(g,h) \in G \times G : s(g) =
r(h)\}$ (under the relative topology) to $G$ are continuous. By an \emph{automorphism
$\alpha$} of a topological groupoid $G$ we mean a structure-preserving homeomorphism
$\alpha\colon G\to G$. If $\alpha$ preserves the algebraic, but not necessarily the
topological, structure, we will call it an \emph{algebraic} automorphism.

A groupoid is said to be \emph{\'etale} if the range map (equivalently the source map) is
a local homeomorphism. An open set $U\subseteq G$ on which $r$ and $s$ are both homeomorphisms
is called an \emph{open bisection}; so an  \'etale groupoid is a groupoid that has a basis
consisting of open bisections. A groupoid is said to be \emph{ample} if its topology
admits a base of compact open bisections. An ample groupoid is necessarily \'etale.

We think of a groupoid $G$ as determining a partially defined action on its unit space:
for $g\in G$ and $u\in \go$, $g$ can act on $u$ if $s(g) = u$, and then $g \cdot u =
r(g)$. We write $[u]$ for the orbit $\{g\cdot u : s(g) = u\}$ of $u$ under $G$. We will
use the standard notation in the literature where $\{g \in G : s(g) = u\}$ is denoted
$G_u$, and $\{g \in G : r(g) = u\}$ is denoted $G^u$. So
\[
    [u] = r(G_u) = s(G^u).
\]
The groupoid $G$ is said to be \emph{minimal} if $\overline{[u]} = \go$ for every $u \in
\go$. It is said to be \emph{principal} if the map $g \mapsto (r(g),s(g))$ is injective.
Equivalently, $G$ is principal if, for every $u \in \go$,  $\{g \in G : s(g) = u = r(g)\}
= \{u\}$.

Each locally compact, Hausdorff, \'etale groupoid has two associated $C^*$-algebras, which
were introduced in \cite{Renault}: the full and reduced algebras $C^*(G)$ and
$C^*_r(G)$. Both are completions of the convolution $^*$-algebra $C_c(G)$ in which
\[
(\xi * \eta)(g) = \sum_{hk = g} \xi(h)\eta(k) \quad \text{ and } \quad
\xi^*(g) = \overline{\xi(g^{-1})}
\]
for $\xi,\eta \in C_c(G)$. The full $C^*$-algebra is the completion with respect to a
suitable universal norm. The reduced $C^*$-algebra is described as follows. For each $u
\in \go$, there is a representation $R_u : C_c(G) \to \Bb(\ell^2(G_u))$ such that, for
$\xi \in C_c(G)$ and $g \in G_u$,
\begin{equation}\label{eqn2.1.a}
R_u(\xi)\delta_g = \sum_{h \in G_{r(g)}}
\xi(h)\delta_{hg}.
\end{equation}
The reduced $C^*$-algebra is the completion of the image of $C_c(G)$
under $\bigoplus_{u \in \go} R_u$.

There are various notions of amenability for groupoids, but we won't give the formal
definitions of here. They all imply that $C^*(G)$ and $C^*_r(G)$ coincide. For our
purposes, it suffices to recall that when $G$ is locally compact, Hausdorff and \'etale,
\cite[Corollary~6.2.14(ii)]{A-DR} (also \cite[Theorem~5.6.18]{BO}) shows that $G$ is
amenable if and only if $C^*_r(G)$ is nuclear --- and then $C^*(G)$ automatically
coincides with $C^*_r(G)$.

\subsection{Graph groupoids}\label{subsec:graphgpd}

A key example for us will be the graph groupoids  introduced in \cite{KPRR}, and
subsequently extended in \cite{Pat02} to allow for non-row-finite graphs. For the
majority of the paper, we will be interested only in the groupoid of the graph $E$ with
one vertex $v$ and infinitely many edges $\{e_i:i \in \NN\}$; but we will need greater generality in
Section~\ref{sec:1-graph examples}.

A directed graph $E$ consists of two countable sets $E^0$ and $E^1$ and two maps $r, s :
E^1 \to E^0$. We will assume that our graphs $E$ have no sources in the sense that
$r^{-1}(v) \not= \emptyset$ for all $v \in E^0$; but we will allow them to be
non-row-finite in the sense that $r^{-1}(v)$ may be infinite.

A \emph{path} in a directed graph $E$ is a vertex, or a word $\mu = \mu_1 \dots \mu_n$
over the alphabet $E^1$ with the property that $s(\mu_i) = r(\mu_{i+1})$ for all $i$. We
write $s(\mu)$ for $s(\mu_n)$ and $r(\mu)$ for $r(\mu_1)$. If $\mu$ is a vertex $v$, then
we define $s(\mu) = r(\mu) = v$. An \emph{infinite path} is a right-infinite word $x =
x_1 x_2 \dots$ over the alphabet $E^1$ satisfying $s(x_i) = r(x_{i+1})$ for all $i$. The
space of all finite paths is denoted $E^*$ and the space of all infinite paths is denoted
$E^\infty$. We write $|\mu|$ for the number of edges in a path $\mu$, with the convention
that $|v| = 0$ for $v \in E^0$ and $|\mu| = \infty$ when $\mu$ is an infinite path. For a
path $\mu$ (either finite or infinite) and an integer $n \le |\mu|$, we write $\mu(0,n)
:= \mu_1 \dots \mu_n \in E^*$. We write $P_E$, or just $P$ for the collection
\[
E^\infty \sqcup\{\mu \in E^* : r^{-1}(s(\mu))\text{ is infinite}\}.
\]
For each $\mu \in E^*$ we define
\[
    Z(\mu) := \{\alpha \in P : |\alpha| \ge |\mu|\text{ and }\alpha(0, |\mu|) =\mu\}
\]
and then for each finite set $F \subseteq r^{-1}(s(\mu))$ we define
\[\textstyle
Z(\mu \setminus F) := Z(\mu) \setminus \bigcup_{e \in F} Z(\mu e).
\]
The collection of all such $Z(\mu \setminus F)$ form a base of compact sets that generate
a locally compact Hausdorff topology on $P$. In particular, for $\mu \in E^*$, a base of
neighbourhoods of $\mu$ is the collection $\{Z(\mu \setminus F) : F \subseteq
r^{-1}(s(\mu))\text{ is finite}\}$, and for $\mu \in E^\infty$ a base of neighbourhoods
of $\mu$ is $\{Z(x(0,n)) : n \in \NN\}$.

We borrow some notation from the $k$-graph literature. For $\mu \in E^*$ and $F \subseteq
P$, we write $\mu F$ for $\{\mu\lambda : \lambda \in F, r(\lambda) = s(\mu)\}$, similarly
for $F \subseteq E^*$, we write $F\mu = \{\lambda\mu : \lambda \in F, s(\lambda) =
r(\mu)\}$. So, for example, $\mu P$ is equal to $Z(\mu)$, and $vE^1 = r^{-1}(v)$.

For $n \in \NN$, there is map $\sigma^n : \{\mu \in P : |\mu| \ge n\} \to P$ given by
$\sigma^{n}(\mu) = s(\mu)$ for $\mu \in E^n$ and $\sigma^n(\mu) = \mu_{n+1} \cdots
\mu_{|\mu|}$ for $|\mu| > n$. Observe that $\sigma^n$ restricts to a homeomorphism on
$Z(\mu)$ whenever $|\mu| \ge n$. Also observe that the domain of $\sigma^{m+n}$ is
$(\sigma^{m})^{-1}(\dom\sigma^n)$ and on this domain, $\sigma^{m+n} = \sigma^n \circ
\sigma^m$.

As a set, the \emph{graph groupoid} $G_E$ of $E$ is equal to
\begin{equation}\label{eqn.2.2}
    G_E = \{(x,m-n,y) : x, y \in P \text{ and } \sigma^m(x) = \sigma^n(y)\}.
\end{equation}
Its unit space is $\{(x,0,x) : x \in P\}$, which we identify with $P$, and multiplication
and inverse are $(x,m,y)(y,n,z) = (x, m+n, z)$ and $(x,m,y)\inv = (y, -m, x)$. The
topology has basic compact open sets
\[
Z((\alpha,\beta) \setminus F) = \{(\alpha x,
|\alpha| - |\beta|, \beta x) : x \in Z(s(\alpha) \setminus F)\}
\]
indexed by pairs
$(\alpha,\beta) \in E^*$ with $s(\alpha) = s(\beta)$ and finite sets $F \subseteq
s(\alpha)E^1$.
\begin{rmk}\label{rem.2.1}
Under this topology $G_E$ is an amenable, second-countable, locally compact, Hausdorff,
ample groupoid \cite{Pat02}. If $\cs(E)$ denotes the graph algebra of $E$, then there is
an isomorphism $\cs(E) \cong \cs(G_E)$ that carries each generator $s_e$ to the
characteristic function $1_{Z(e,s(e))}$.
\end{rmk}

\subsection{\texorpdfstring{$C^*$}{C*}-correspondences and \texorpdfstring{$C^*$}{C*}-algebras}

Following \cite{FR99}, given a $C^*$-algebra $A$, we say that a right $A$-module $X$ is a
\emph{right inner-product module} if it is endowed with a map
$\langle\cdot,\cdot\rangle_A : X \times X \to A$ that is linear and $A$-linear in the
second variable, satisfies $\langle x, y\rangle_A^* = \langle y,x\rangle_A$, and
satisfies $\langle x, x\rangle_A \ge 0$ for all $x$, with equality only for $x = 0$. The
formula $\|x\| := \|\langle x, x\rangle_A\|^{1/2}$ defines a norm on $X$, and $X$ is a
\emph{right Hilbert $A$-module} if it is complete in this norm. An operator $T$ on $X$ is
\emph{adjointable} if there is an operator $T^*$ on $X$ such that $\langle Tx,y\rangle_A
= \langle x, T^* y\rangle_A$ for all $x,y$; $T$ is then linear, bounded and $A$-linear
and $T^*$ is unique. The space $\Ll(X)$ of all adjointable operators is a $C^*$-algebra
under the operator norm. For $x,y \in X$, the formula $\Theta_{x,y}(z) := x \cdot \langle
y,z\rangle_A$ determines an adjointable operator with adjoint $\Theta_{x,y}^* =
\Theta_{y,x}$. The set $\clsp\{\Theta_{x,y} : x,y \in X\}$ is an ideal of $\Ll(X)$
denoted $\Kk(X)$. For more details, see for example \cite[Section~2.2]{tfb}.

A right Hilbert $A$-module $X$ over a $C^*$-algebra $A$ becomes a
\emph{$C^*$-correspondence over $A$} when endowed with a homomorphism $\phi : A \to
\Ll(X)$, which we then regard as defining a left action of $A$ on $X$: $a \cdot x :=
\phi(a)x$. A \emph{Toeplitz representation}, or just a \emph{representation}, of $X$ in a
$C^*$-algebra $B$ is a pair $(\psi,\pi)$ consisting of a linear map $\psi : X \to B$ and
a homomorphism $\pi : A \to B$ such that
\[
\pi(a)\psi(x) = \psi(a \cdot x), \quad \psi(x)\pi(a) = \psi(x\cdot a) \quad \text{ and } \quad \psi(x)^*\psi(y) = \pi(\langle x, y\rangle_A)
\]
for all $a\in A$ and $x,y \in X$.

Each $C^*$-correspondence $X$ over $A$ has two associated $C^*$-algebras: the
\emph{Toeplitz algebra} $\Tt_X$ and the \emph{Cuntz--Pimsner algebra} $\OO_X$
\cite{Pimsner}. The Toeplitz algebra $\Tt_X$ is the universal $C^*$-algebra generated by
a Toeplitz representation $(i_X, i_A)$ of $X$. Every representation of $X$ induces a
homomorphism $\psi^{(1)} : \Kk(X) \to B$ satisfying $\psi^{(1)}(\Theta_{x,y}) = \psi(x)
\psi(y)^*$. If the homomorphism $\phi : A \to \Ll(X)$ implementing the left action is
injective, then we say that the representation $(\psi,\pi)$ is \emph{Cuntz--Pimsner
covariant} if
\[
\psi^{(1)}(\phi(a)) = \pi(a)\quad\text{ whenever $\phi(a) \in \Kk(X)$.}
\]
The Cuntz--Pimsner algebra $\OO_X$ of $X$ is the quotient of $\Tt_X$ that is
universal for Cuntz--Pimsner covariant representations of $X$.

\section{A twisted product of groupoids}\label{sec:ginf}

Let $H$ be a topological groupoid endowed with a cocyle $c:H \to \ZZ$ (a cocycle is a map
satisfying $c(gh)=c(g)+c(h)$ whenever $s(g)=r(h)$) which is continuous. Let $G$ be
another topological groupoid, and suppose that $\alpha : G \to G$ is an automorphism of
$G$. In this section, we describe how to construct a \emph{twisted product groupoid} $H
\times_{c,\alpha} G$. Our construction makes sense for any $H$ and $G$, but we will later
be interested primarily in the situation where $H$ is the groupoid associated to
$\OO_{\infty}$ (see~\eqref{eq:hinf}).

\begin{lem}\label{lem:twisted product}
Let $G$ and $H$ be groupoids. Suppose that $c : H \to \ZZ$ is a cocycle, and that $\alpha : G
\to G$ is an algebraic automorphism. Let
\[
    H \times_{c,\alpha} G := H \times G,
\]
and $(H \times_{c,\alpha} G)^{(0)} := H^{(0)} \times \go$. Define $r, s : H
\times_{c,\alpha} G \to (H \times_{c,\alpha} G)^{(0)}$ by $r(h,g) := \big(r(h),
r(g)\big)$ and $s(h,g) := \big(s(h), \alpha^{c(h)}(s(g))\big)$. For $(h_1, g_1), (h_2,
g_2) \in H \times_{c,\alpha} G$ with $s(h_1, g_1) = r(h_2, g_2)$, define
\[
(h_1,g_1)(h_2,g_2):=(h_1h_2, g_1\alpha^{-c(h_1)}(g_2)) \quad\text{ and }\quad	
    (h,g)\inv:=(h\inv,\alpha^{c(h)}(g\inv)).
\]
Under these structure maps, the set $H \times_{c,\alpha} G$ is a groupoid.
\end{lem}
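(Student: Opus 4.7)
The plan is to verify each of the groupoid axioms directly from the formulas, with the interplay between the cocycle $c$ and the $\alpha$-twisting doing all of the bookkeeping work. Throughout, the two crucial identities are that $\alpha^n$ is an algebraic automorphism of $G$ for every $n \in \ZZ$ (so that it preserves units, ranges and sources, and respects composition and inversion), and that $c$ is a cocycle, so that $\alpha^{c(h_1h_2)} = \alpha^{c(h_1)} \circ \alpha^{c(h_2)}$ whenever $h_1h_2$ is defined.

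First I would check that the range and source maps land in $(H\times_{c,\alpha} G)^{(0)} = H^{(0)} \times G^{(0)}$; this is immediate for $r$, and for $s$ it uses that $\alpha^{c(h)}$ sends $\go$ to itself. Next, I would verify that the product is well-defined whenever $s(h_1,g_1) = r(h_2,g_2)$: the first coordinate gives $s(h_1)=r(h_2)$, so $h_1h_2$ is defined in $H$, while the second coordinate gives $\alpha^{c(h_1)}(s(g_1)) = r(g_2)$, so $s(g_1) = \alpha^{-c(h_1)}(r(g_2)) = r(\alpha^{-c(h_1)}(g_2))$, making $g_1\alpha^{-c(h_1)}(g_2)$ composable in $G$.

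Then I would show that $r$ and $s$ respect composition. The range identity $r((h_1,g_1)(h_2,g_2)) = r(h_1,g_1)$ is automatic, while the source identity reads
\[
    s\big(h_1h_2, g_1\alpha^{-c(h_1)}(g_2)\big)
    = \big(s(h_2),\, \alpha^{c(h_1)+c(h_2)}\big(\alpha^{-c(h_1)}(s(g_2))\big)\big)
    = \big(s(h_2),\, \alpha^{c(h_2)}(s(g_2))\big),
\]
which is precisely $s(h_2,g_2)$; here the cocycle condition is used to rewrite $c(h_1h_2) = c(h_1)+c(h_2)$, and the fact that $\alpha$ is an algebraic automorphism is used to pull $\alpha^{c(h_1)+c(h_2)}$ past $s$. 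Associativity follows similarly: expanding both $((h_1,g_1)(h_2,g_2))(h_3,g_3)$ and $(h_1,g_1)((h_2,g_2)(h_3,g_3))$ in the second coordinate, each yields $g_1\alpha^{-c(h_1)}(g_2)\alpha^{-c(h_1)-c(h_2)}(g_3)$, once one uses that $\alpha^{-c(h_1)}$ is a homomorphism of $G$ to distribute it across the product $g_2\alpha^{-c(h_2)}(g_3)$.

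Finally, I would verify the inverse formula. To see that $(h\inv, \alpha^{c(h)}(g\inv))$ is composable with $(h,g)$ on the right, I compute that $r((h,g)\inv) = (s(h), \alpha^{c(h)}(s(g))) = s(h,g)$, using that $r(g\inv)=s(g)$. The product $(h,g)(h,g)\inv$ then equals $(hh\inv, g\alpha^{-c(h)}(\alpha^{c(h)}(g\inv))) = (r(h), gg\inv) = (r(h), r(g)) = r(h,g)$, and the analogous computation gives $(h,g)\inv(h,g) = s(h,g)$. The main potential obstacle is purely notational bookkeeping: keeping track of signs in the exponents of $\alpha$ and where the cocycle condition is applied; but conceptually, all the axioms reduce to the two facts that $\alpha$ is an automorphism and $c$ is a cocycle, so no deeper input is required.
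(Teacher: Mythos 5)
Your proposal is correct and follows essentially the same direct verification of the groupoid axioms as the paper, using the same two ingredients (the cocycle identity $c(h_1h_2)=c(h_1)+c(h_2)$ and the fact that each $\alpha^n$ preserves sources, ranges and products). The only step you omit that the paper includes explicitly is the check that the designated units act as identities, i.e.\ $r(h,g)(h,g)=(h,g)=(h,g)s(h,g)$; this is the same one-line computation, using that $c$ vanishes on $H^{(0)}$, so it is a cosmetic rather than a substantive gap.
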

\begin{proof}
Clearly $r((h_1, g_1)(h_2,g_2)) = r(h_1, g_1)$. Since $\ZZ$ is abelian and $\alpha$ is an
automorphism, we have $\alpha^{c(h_1h_2)}(s(\alpha^{-c(h_1)}(g_2))) =
\alpha^{c(h_2)}\circ\alpha^{c(h_1)}(\alpha^{-c(h_1)}s(g_2)) = \alpha^{c(h_2)}(s(g_2))$,
and so
\[
s((h_1, g_1)(h_2,g_2))
    = \big(s(h_1h_2), \alpha^{c(h_1h_2)}(s(\alpha^{-c(h_1)}(g_2)))\big)
    = \big(s(h_1h_2), \alpha^{c(h_2)}(s(g_2))\big)
    = s(h_2, g_2).
\]
Now if $(h_1, g_1), (h_2, g_2)$ and $(h_3,g_3)$ are composable, then
\begin{align*}
\big((h_1,g_1)(h_2,g_2)\big)(h_3,g_3)
    &= (h_1h_2, g_1\alpha^{-c(h_1)}(g_2))(h_3,g_3)
    = (h_1h_2h_3, g_1 \alpha^{-c(h_1)}(g_2)\alpha^{-c(h_1h_2)}(g_3)) \\
    &= (h_1, g_1)\big(h_2h_3, g_1\alpha^{-c(h_1)}(g_2\alpha^{-c(h_2)}(g_3))\big)
    = (h_1,g_1)\big((h_2,g_2)(h_3,g_3)\big).
\end{align*}
So multiplication preserves ranges and sources and is associative. We have
\[
(h,g)s(h,g)
    = (h,g)(s(h), \alpha^{c(h)}(s(g)))
    = (h s(h), g\alpha^{-c(h)}(\alpha^{c(h)}(s(g))))
    = (h,g),
\]
and an even simpler calculation gives $r(h,g)(h,g) = (h,g)$. We have
\[
(h,g)^{-1}(h,g)
    = (h\inv h, \alpha^{c(h)}(g\inv)\alpha^{-c(h^{-1})}(g))
    = (s(g), \alpha^{c(h)}(s(g))) = s(h,g),
\]
and a similar calculation gives $(h,g)(h\inv,\alpha^{c(h)}(g\inv)) = r(h,g)$. Finally we have
\[
s((h,g)\inv)=s(h\inv,\alpha^{c(h)}(g\inv))=\big(s(h\inv), \alpha^{c(h\inv)}\big(s(\alpha^{c(h)}
(g\inv))\big)\big)=(s(h\inv), s(g\inv))=r(h,g),
\]
completing
the proof.
\end{proof}

We now show that if $H$ and $G$ are both locally compact, Hausdorff, \'etale groupoids,
then so is $H \times_{c,\alpha} G$.

\begin{lem}
\label{lem:etale} Let $H$ and $G$ be locally compact, Hausdorff, \'etale groupoids, $c:H
\to \ZZ$ be a continuous cocycle and $\alpha:G \to G$ an automorphism. Then the groupoid
$H \times_{c,\alpha} G$ is locally compact, Hausdorff and \'etale when endowed with the
product topology. If $G$ and $H$ are second countable, then so is $H \times_{c,\alpha}G$.
If $G$ and $H$ are ample, then so is $H \times_{c,\alpha}G$.
\end{lem}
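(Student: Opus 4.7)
The plan is to observe that as a topological space $H \times_{c,\alpha} G$ is just the product $H \times G$, so local compactness, Hausdorffness, and second-countability (when applicable) are inherited from the factors by elementary point-set topology. The content of the lemma lies in checking that the groupoid structure maps defined in Lemma~\ref{lem:twisted product} are continuous and that $r,s$ are local homeomorphisms. The key leverage is that $c : H \to \ZZ$ is continuous and $\ZZ$ is discrete, so $c$ is locally constant; in particular, on any sufficiently small open set $U \subseteq H$ we have $c|_U \equiv n$ for some $n \in \ZZ$.

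First I would show that multiplication, inversion, and the maps $r,s$ are continuous. Fix $(h,g)$ and choose open neighbourhoods $U$ of $h$ in $H$ and $V$ of $g$ in $G$ with $c|_U \equiv n$. On $U \times V$, the source map becomes $(h',g') \mapsto (s(h'), \alpha^n(s(g')))$, which is continuous because $s_H$, $s_G$, and $\alpha$ are. The range map is clearly continuous. Similarly, for multiplication, choose open neighbourhoods of the two factors on which $c$ is constantly $n_1$ and $n_2$ respectively; then multiplication restricts to $(h_1,g_1,h_2,g_2) \mapsto (h_1 h_2, g_1 \alpha^{-n_1}(g_2))$, which is continuous. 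Inversion on $U \times V$ becomes $(h',g') \mapsto (h'^{-1}, \alpha^n(g'^{-1}))$, again continuous.

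Next I would verify that $s$ is a local homeomorphism, from which the étale property follows. Given $(h,g)$, shrink $U$ and $V$ to open bisections in $H$ and $G$ respectively, with $c|_U \equiv n$. Then $s$ carries $U \times V$ bijectively onto $s(U) \times \alpha^n(s(V))$, which is open in $(H \times_{c,\alpha} G)^{(0)} = H^{(0)} \times \go$; the inverse on this image is $(u, w) \mapsto \big((s|_U)^{-1}(u), (s|_V)^{-1}(\alpha^{-n}(w))\big)$, which is continuous. So $U \times V$ is itself an open bisection of $H \times_{c,\alpha} G$, and these form a basis for the topology, so $H \times_{c,\alpha} G$ is étale. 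Local compactness, Hausdorffness, and (if applicable) second-countability are then automatic from the product topology.

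Finally, for ampleness, suppose $G$ and $H$ have bases of compact open bisections. Given a basic open set $U \times V$ of $H \times G$ with $U,V$ compact open bisections, the image $c(U)$ is a compact subset of $\ZZ$, hence finite, so $U$ decomposes as a finite disjoint union of compact open bisections on each of which $c$ is constant. Taking products of these refinements with compact open bisections of $G$ gives a basis of compact open bisections of $H \times_{c,\alpha} G$ by the preceding paragraph. The main (and only real) obstacle is keeping the bookkeeping with the twist $\alpha^{c(h)}$ straight in the source map and in multiplication; once $c$ is made locally constant by restriction, everything reduces to continuity of the ordinary product-groupoid operations composed with a fixed power of $\alpha$.
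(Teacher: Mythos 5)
Your proof is correct and takes essentially the same approach as the paper's: the underlying space is just $H\times G$, the operations are continuous because $c$ is locally constant and each $\alpha^n$ is continuous, and products of (compact) open bisections give the \'etale and ample statements. The only difference is cosmetic --- the paper gets the \'etale property in one line by observing that the \emph{range} map $r(h,g)=(r(h),r(g))$ is untwisted and so coincides with the range map of the ordinary product groupoid $H\times G$, whereas you verify the local-homeomorphism property for the twisted source map directly and refine compact open bisections into pieces on which $c$ is constant; both of these extra steps are sound but not needed for the paper's argument.
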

\begin{proof}
Since $H$ and $G$ are locally compact and Hausdorff, so is the product $H \times G$.
Since $H \times_{c,\alpha} G$ is $H \times G$ as a topological space, we deduce that $H
\times_{c,\alpha} G$ is locally compact and Hausdorff. Composition and inversion in $H
\times_{c,\alpha} G$ are continuous because each $\alpha^n$ is continuous and composition
and inversion in each of $H$ and $G$ are continuous. Since the range map on $H
\times_{c,\alpha} G$ coincides with that on the \'etale groupoid $H \times G$, it is a
local homeomorphism, and so $H \times_{c,\alpha} G$ is \'etale too.

The statement about second countability is clear. For the final statement, observe that
if $\mathcal{U}$ and $\mathcal{V}$ are bases of compact open bisections for $H$ and $G$
respectively, then $\mathcal{U} \times \mathcal{V}$ is a base of compact open bisections
for $H \times_{c,\alpha} G$.
\end{proof}

\subsection{The twisted product groupoid\texorpdfstring{ $\ginf$}{}}

From now on, we restrict our attention to the situation where $G$ is locally compact,
Hausdorff and \'etale and $H$ is the groupoid associated to the Cuntz algebra
$\OO_{\infty}$, which we make precise below.

Let $E$ be the graph with one vertex $v$ and infinitely many edges $\{e_i \colon i\in
\NN\}$. Thus, the graph $\cs$-algebra $\cs(E)$ is canonically isomorphic to the Cuntz
algebra $\OO_{\infty}$ (see for example \cite[page~42]{RS}).  We denote by $H_\infty$ the
groupoid $G_E$. Since $C^*(H_\infty)\cong \OO_\infty$, we call $H_\infty$ \emph{the
groupoid associated to the Cuntz algebra $\OO_{\infty}$}. As discussed in
Section~\ref{sec:prelims}, writing $P:=E^*\cup E^\infty$, the groupoid $H_\infty$
consists of ordered triples
\begin{equation}\label{eq:hinf}
H_\infty = \{(\alpha x,|\alpha|-|\beta|, \beta x): x\in P, \alpha,\beta\in
			E^*r(x)\}
    \subseteq P \times \ZZ \times P,
\end{equation}
with operations $(x, m, y)(y, n, z)=(x, m+n, z)$ and $(x, m, y)\inv= (y, -m, x)$. We
identify $\hio$ with $P$ via $(x,0,x) \mapsto x$. The topology on $H_\infty$ described in
Section~\ref{subsec:graphgpd} is generated by the sets
\[
    Z((\alpha,\beta) \setminus F) = \{(\alpha x,|\alpha|-|\beta|, \beta x) : x \in P, x_1 \not\in F\}
\]
indexed by pairs $\alpha,\beta \in E^*$ and finite subsets $F \subseteq \{e_i : i \in
\NN\}$. We define $Z(\alpha,\beta) := Z((\alpha,\beta)\setminus \emptyset)$. For $\alpha
\in P$ we have
\[
    \alpha{P} = Z(\alpha) = Z(\alpha,\alpha) = \{\alpha x : x \in P\}.
\]
With this structure $H_\infty$ is a second-countable, amenable, locally compact,
Hausdorff, ample groupoid; the sets $Z((\alpha,\beta) \setminus F)$ are compact open
bisections.

\begin{lem}\label{lem:handy}
For every nonempty open set $W \subseteq \hio$ there exists $\lambda
\in E^*$ such that $Z(\lambda) \subseteq W$.
\end{lem}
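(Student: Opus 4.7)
The plan is to pick an arbitrary point $x \in W$ and produce an explicit $\lambda \in E^*$ with $x \in Z(\lambda) \subseteq W$. Since the description of a neighbourhood basis in the topology on $P$ depends on whether $x$ is finite or infinite, I would split into two cases corresponding to $x \in E^\infty$ and $x \in E^*$.

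In the infinite case, the neighbourhoods $\{Z(x(0,n)) : n \in \NN\}$ already form a basis at $x$, so openness of $W$ gives some $n$ with $Z(x(0,n)) \subseteq W$, and we simply take $\lambda = x(0,n)$. This case is essentially immediate from the definition of the topology.

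The finite case is the only place any real work happens. Here a basis at $x$ is given by the sets $Z(x \setminus F)$ with $F \subseteq r^{-1}(s(x)) = E^1$ finite, so there is a finite $F \subseteq E^1$ with $Z(x \setminus F) \subseteq W$. Now $Z(x \setminus F)$ is not itself of the form $Z(\lambda)$, so the task is to shrink it. The key observation is that $E^1 = \{e_i : i \in \NN\}$ is infinite, so we may choose an edge $e \in E^1 \setminus F$ and set $\lambda := xe \in E^*$. Since the cylinders $\{Z(xe') : e' \in E^1\}$ are pairwise disjoint (any element of $Z(xe')$ has $(|x|+1)$st edge equal to $e'$), we get
\[
Z(\lambda) = Z(xe) \subseteq Z(x) \setminus \bigcup_{e' \in F} Z(xe') = Z(x \setminus F) \subseteq W,
\]
as required.

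The main (and only real) obstacle is this last case, and it is overcome using the special feature of $E$ that $E^1$ is infinite, which ensures that some edge lies outside the finite exceptional set $F$. No case analysis on whether $x$ is a source or not is needed here because $E$ has no sources and the single vertex $v$ receives infinitely many edges.
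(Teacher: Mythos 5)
Your proof is correct and follows essentially the same route as the paper: treat the infinite-path case via the cylinder neighbourhood base, and in the finite-path case pass from $Z(u\setminus F)$ to $Z(ue)$ for an edge $e$ outside the finite set $F$, which exists because $E^1$ is infinite. The paper simply makes the concrete choice $e = e_n$ with $n = \max\{j : e_j \in F\}+1$, which is the same idea.
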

\begin{proof}
Fix any $u \in W$. If $u \in E^\infty$ then the sets $\{Z(u(0,n)) : n \in \NN\}$ are a
neighbourhood base at $u$, so we can take $\lambda = u(0,n)$ for large $n$. If $u \in
E^*$, then the sets $Z(u \setminus F)$, where $F\subseteq s(u)E^1$ is finite, form a
neighbourhood base at $u$. Hence there exists finite $F$ such that $Z(u \setminus
F)\subseteq W$. Let $n=\max\{j\colon e_j\in F\}+1$. Then $\lambda = u e_n$ satisfies
$Z(\lambda)\subseteq Z(u \setminus F)\subseteq W$.
\end{proof}

\begin{ntn}
Let $G$ be any locally compact, Hausdorff, \'etale groupoid and let $\alpha$
be an automorphism of $G$. We let $\ginf$ denote the twisted product of $G$ and
$H_\infty$ with its canonical cocycle, i.e.,
\begin{equation}\label{eq:ginf}
\ginf := H_\infty \times_{c, \alpha} G,
\end{equation}
as in Lemma~\ref{lem:twisted product}, where $c : H_\infty \to \ZZ$ is given by
$c(x,m,y) = m$.
\end{ntn}

\begin{lem} \label{lem:amenable}
Let $G$ be a locally compact, Hausdorff, \'etale groupoid, $\alpha$ an automorphism of
$G$, and \defginf. Then $\ginf$ is amenable if and only if $G$ is amenable; and then
$\cs(\ginf)\cong \cs_r(\ginf)$ is nuclear and in the UCT class.
\end{lem}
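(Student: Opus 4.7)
The plan is to use a continuous cocycle $\tilde c\colon \ginf \to \ZZ$ to reduce amenability of $\ginf$ to that of $G$, exploiting the known amenability of $H_\infty$ along the way. Define $\tilde c(h, g) := c(h)$; the multiplication formula in Lemma~\ref{lem:twisted product} makes it immediate that $\tilde c$ is a continuous cocycle. Since $\ZZ$ is discrete, its kernel $K := \tilde c\inv(0)$ is a clopen subgroupoid of $\ginf$ with the same unit space.

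The key observation is that within $K$ we have $c(h_1) = 0$, so the twist $\alpha^{-c(h_1)}$ disappears; the multiplication reduces to $(h_1, g_1)(h_2, g_2) = (h_1 h_2, g_1 g_2)$ and inversion to $(h, g)\inv = (h\inv, g\inv)$. Hence $K$ is isomorphic, as a topological groupoid, to the direct product $(\ker c) \times G$. Here $\ker c \subseteq H_\infty$ is the kernel of the canonical cocycle; it is an open subgroupoid of the amenable groupoid $H_\infty$ (Remark~\ref{rem.2.1}), hence amenable. Since the direct product of two \'etale Hausdorff groupoids is amenable if and only if each factor is, $K$ is amenable iff $G$ is.

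Now invoke the standard cocycle reduction for amenability (see, e.g., \cite[Proposition~5.1.1]{A-DR}): for an \'etale locally compact Hausdorff groupoid equipped with a continuous cocycle to a discrete amenable group, the groupoid is amenable iff the kernel of the cocycle is. Applied to $\tilde c \colon \ginf \to \ZZ$, this gives $\ginf$ amenable iff $K$ amenable iff $G$ amenable. When $\ginf$ is amenable, \cite[Corollary~6.2.14(ii)]{A-DR} yields $\cs(\ginf)\cong \cs_r(\ginf)$ and nuclearity, and the UCT statement then follows from Tu's theorem that the reduced $\cs$-algebra of a second-countable amenable \'etale Hausdorff groupoid lies in the UCT class (with second countability of $G$ implicit here, passing to $\ginf$ by Lemma~\ref{lem:etale}). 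The main obstacle is invoking the cocycle reduction and direct-product amenability results correctly; both are standard for \'etale groupoids but require care in their citations and in verifying that $\tilde c$ behaves well with respect to the twisted product topology.
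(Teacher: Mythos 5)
Your proposal is correct and follows essentially the same route as the paper: both define the cocycle $\tilde c(h,g)=c(h)$, identify $\tilde c\inv(0)$ with $c\inv(0)\times G$, and combine product amenability with the cocycle-reduction principle (the paper obtains amenability of $c\inv(0)$ directly from Paterson and handles the converse via the closed subgroupoid $\{x\}\times G$ rather than the ``iff'' form of the reduction, but these are cosmetic differences). One small correction: \cite[Proposition~5.1.1]{A-DR} gives only the easy half of the cocycle reduction (kernel is clopen, hence locally closed), whereas the substantive direction --- kernel amenable implies groupoid amenable --- is \cite[Proposition~9.3]{spielberg} or \cite[Corollary~4.5]{RenaultWilliams:xx15}, as you yourself anticipated when flagging the citations as the delicate point.
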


\begin{proof}
First suppose that $G$ is amenable. Define a map $\tilde{c}:\ginf \to \ZZ$  by
$\tilde{c}(h,g) = c(h)$. Then $\tilde{c}$ is a cocycle on $\ginf$. Using
\cite[Proposition~9.3]{spielberg} (see also \cite[Corollary~4.5]{RenaultWilliams:xx15}),
we show that $\ginf$ is amenable by showing that $\tilde{c}\inv(0)$ is amenable. The
formula for multiplication in $G$ restricts to coordinatewise multiplication on the
subgroupoid $\tilde{c}\inv(0)$, and so $\tilde{c}\inv(0) \cong c\inv(0) \times G$ as
topological groupoids. Since products of amenable groupoids are again amenable, and since
$G$ is amenable by assumption, it therefore suffices to show that $c\inv(0) \subseteq
H_\infty$ is amenable. As above, let $E$ be the graph with one vertex and infinitely many
loops, so that $H_\infty$ is the graph groupoid of $E$. For each $n \in \NN$, Paterson
shows \cite[Proposition~4.1]{Pat02} that the subgroupoid $R_n := \bigcup^n_{i=1}\{(\alpha
x, 0, \beta x) : \alpha,\beta \in E^i, x \in E^* \cup E^\infty\}$ of $H_\infty$ is
amenable. He then shows (see the two paragraphs following \cite[Proposition~4.1]{Pat02})
that $R := \bigcup^\infty_{n=1} R_n$ is also amenable. This $R$ is precisely $c\inv(0)$,
so $c\inv(0)$, and hence $\tilde{c}\inv(0)$ is amenable.

Now suppose that $\ginf$ is amenable. Fix $x \in \hio$. Then the subgroupoid $\{x\}
\times G$ of $\ginf$ is closed, and therefore locally closed, so
\cite[Proposition~5.1.1]{A-DR} shows that $\{x\} \times G$ is amenable. Since $G$ is
canonically isomorphic to $\{x\} \times G$, it follows that $G$ is amenable.

By Lemma \ref{lem:etale}, $\ginf$ is locally compact, Hausdorff and \'etale and so $C^*_r(\ginf)$ is defined. Suppose that
$\ginf$ is amenable. Then $\cs(\ginf) \cong \cs_r(\ginf)$ by
\cite[Proposition~6.1.18]{A-DR}. It is nuclear by \cite[Corollary~6.2.14(i)]{A-DR}, and
belongs to the UCT class by \cite[Lemma~3.3 and Proposition~10.7]{Tu}.
\end{proof}

\section{Realising \texorpdfstring{$\cs(\ginf)$}{the twisted-product groupoid C*-algebra} as a Toeplitz algebra}
\label{sec:toepalgebra}

Throughout this section we assume that $G$ is amenable.  Under this hypothesis we will show that $\cs(\ginf)$ can be realised as the Toeplitz algebra of a
correspondence over $\cs(G)$. Provided that $\cs(G)$ is separable, it will then follow
from \cite[Theorem~4.4]{Pimsner} that $\cs(\ginf)$ is $KK$-equivalent to $\cs(G)$.

We continue to write $H_\infty$ for the groupoid~\eqref{eq:hinf} associated
to the Cuntz algebra $\OO_{\infty}$. For $\phi \in C_c(H_\infty)$ and $f \in C_c(G)$ we write
$\phi \times f$ for the function in $C_c(\ginf)$ defined by
\[
    \phi \times f (h,g) = \phi(h) f(g).
\]

\begin{lem}\label{lem:embedding}
Let $G$ be an amenable, locally compact, Hausdorff, \'etale groupoid, $\alpha$ an
automorphism of $G$, and \defginf. Then the map $\iotaG: f\mapsto 1_{\hio} \times f$ from
$C_c(G)$ to $C_c(\ginf)$ extends to an embedding $\iotaG : C^*(G) \to C^*(\ginf)$.
\end{lem}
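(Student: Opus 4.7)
The plan is to verify that $\iotaG$ is a $*$-homomorphism at the $C_c$ level, then use the regular representations to show it is isometric for the reduced norms, and finally invoke amenability to transfer the conclusion to the full $\cs$-algebras. First I would check well-definedness: since $\hio = P = Z(v)$ is a compact open subset of $H_\infty$, the characteristic function $1_{\hio}$ lies in $C_c(H_\infty)$, so $\iotaG(f) = 1_{\hio} \times f$ lies in $C_c(\ginf)$ for every $f \in C_c(G)$. To see that $\iotaG$ preserves convolution and involution, I would compute directly using the formulas in Lemma~\ref{lem:twisted product}: a product $(h_1,g_1)(h_2,g_2) = (h,g)$ contributes to $\bigl(\iotaG(f_1) * \iotaG(f_2)\bigr)(h,g)$ only when both $h_1$ and $h_2$ are units, which forces $h = h_1 = h_2$ to itself be a unit and, crucially, makes $c(h_1) = 0$, so the twist $\alpha^{-c(h_1)}$ in the product formula disappears and the expression reduces to $(f_1*f_2)(g)$. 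The same vanishing of $c$ on units makes $\iotaG(f)^* = \iotaG(f^*)$ immediate.

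For the $\cs$-level extension and injectivity, I would work with the reduced $\cs$-algebras: since $G$ is amenable by hypothesis, Lemma~\ref{lem:amenable} gives that $\ginf$ is amenable too, so $\cs(G) = \cs_r(G)$ and $\cs(\ginf) = \cs_r(\ginf)$. Fix a unit $(x,y) \in \ginfo = \hio \times \go$. The source fibre decomposes as
\[
(\ginf)_{(x,y)} = \bigsqcup_{h \in (H_\infty)_x} \{h\} \times G_{\alpha^{-c(h)}(y)},
\]
giving an orthogonal decomposition of $\ell^2((\ginf)_{(x,y)})$. A direct computation of $R_{(x,y)}(\iotaG(f))\delta_{(h,g)}$ using \eqref{eqn2.1.a} shows that the only nonzero contributions come from the unit $h' = r(h)$ in $H_\infty$; since $c(h') = 0$ the twist again vanishes, and the resulting vectors lie in the same summand. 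Under the natural identification $\delta_{(h,g)} \leftrightarrow \delta_g$ the summand indexed by $h$ carries the regular representation $R_{\alpha^{-c(h)}(y)}$ of $\cs(G)$, so $\|R_{(x,y)}(\iotaG(f))\| = \sup_{h \in (H_\infty)_x} \|R_{\alpha^{-c(h)}(y)}(f)\|$.

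Taking the supremum over all units $(x,y)$ of $\ginf$, and using that $\alpha$ permutes $\go$, yields $\|\iotaG(f)\|_{\cs_r(\ginf)} \le \|f\|_{\cs_r(G)}$; the reverse inequality comes from choosing $h = x$ in the supremum above, where $c(x) = 0$ and the summand carries $R_y(f)$ exactly. Hence $\iotaG$ is isometric on $C_c(G)$ for the reduced norms, and by amenability the full and reduced norms coincide on both sides, so $\iotaG$ extends to the desired embedding $\cs(G) \hookrightarrow \cs(\ginf)$.

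The main obstacle will be the bookkeeping in the Hilbert-space decomposition: one must track carefully how $s(h)$, $r(h)$, $c(h)$, and $\alpha^{c(h)}$ interact with $s(g)$ and $r(g)$ in the definition of the source fibres of $\ginf$. Once this is untangled, the vanishing of $c$ on units makes each remaining calculation essentially mechanical, and the amenability hypothesis disposes of the passage to the full $\cs$-algebras for free.
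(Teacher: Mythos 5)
Your proposal is correct and follows essentially the same route as the paper: verify the $*$-homomorphism property on $C_c(G)$ using the vanishing of the cocycle on units, then decompose the regular representation of $\ginf$ at a unit $(x,y)$ over the fibres $\{h\}\times G_{\alpha^{-c(h)}(y)}$ to see it as a direct sum of regular representations of $G$, which gives the norm inequality in one direction, with the reverse inequality coming from the summand indexed by the unit $h=x$. The paper phrases the final step as isometry for the reduced norms with the identification of full and reduced algebras left implicit under the standing amenability hypothesis, but this is the same argument you give.
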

\begin{proof}
Since $\hio$ is compact and open in $H_\infty$, the characteristic function $1_{\hio}$ belongs to
$C_c(H_\infty)$, and so $\iota$ makes sense. Since $c$ is zero on $\hio$, the operations
on $\hio \times G$ in $\ginf$ agree with those in $H_\infty \times G$. Since $f \mapsto
1_{\hio} \times f$ is multiplicative and star-preserving from $C_c(G)$ to $C_c(H_\infty
\times G)$, it follows that $\iota$ is a homomorphism.

Fix $(u,x) \in \hio \times \go$ and $h\in (H_\infty)_u$. Define $y:=\alpha^{-c(h)}(x)\in
\go$ and let $P_h$ denote the projection of $\ell^2((\ginf)_{(u,x)})$ onto the subspace
$\ell^2(\{h\}\times G_y)$. Define $V_h : \ell^2(\{h\}\times G_y)\to \ell^2(G_y)$ by $V_h
\delta_{(h,\gamma)} = \delta_\gamma$. Fix $(s,\lambda)\in(\ginf)_{(u,x)}$. We have
$V_h\delta_{(s,\lambda)} = V_hP_h\delta_{(s,\lambda)}$, which is equal to
$\delta_{\lambda}$ if $s=h$ and zero otherwise. So~\eqref{eqn2.1.a} gives
\[
V^*_hR_y(f)V_h \delta_{(s,\lambda)}
    = 1_{\{h\}}(s)V^*_hR_y(f)\delta_{\lambda}
    = 1_{\{h\}}(s)V^*_h\sum_{\mu \in G_{r(\lambda)}}f(\mu)\delta_{\mu\lambda}
    = 1_{\{h\}}(s)\sum_{\mu \in G_{r(\lambda)}}f(\mu)\delta_{(h,\mu\lambda)}.
\]
Moreover, by \eqref{eqn2.1.a} again, $R_{(u,x)}(\iotaG(f))
\delta_{(s,\lambda)}=\sum_{(g,\mu) \in
(\ginf)_{r(s,\lambda)}}\iotaG(f)(g,\mu)\delta_{(g,\mu)(s,\lambda)}$ with nonzero terms in
the sum only when $g=r(s)$, in which case $\iotaG(f)(g,\mu)\delta_{(g,\mu)(r,\lambda)}
=f(\mu)\delta_{(r,\mu\lambda)}$. We get
$$P_h R_{(u,x)}(\iotaG(f)) \delta_{(s,\lambda)}
=P_h\sum_{(r(s),\mu) \in (\ginf)_{r(s,\lambda)}}f(\mu)\delta_{(s,\mu\lambda)}
=1_{\{h\}}(s)\sum_{\mu \in G_{r(\lambda)}}f(\mu)\delta_{(h,\mu\lambda)}.$$

Summing over all $h\in (H_\infty)_u$, we have $R_{(u,x)}(\iotaG(f)) = \bigoplus_h V_h^*
R_{\alpha^{-c(h)}(x)}(f) V_h$. That is, each regular representation of $\ginf$ is
equivalent to a direct sum of regular representations of $C_c(G)$, which gives
$\|R_{(u,x)}(\iotaG(f))\| \le \|f\|_{C^*_r(G)}$ for all $f$. Also, each regular
representation $R_x$ of $G$ is equivalent to a direct summand in a regular representation
of $C_c(\ginf)$, giving $\|R_x(f)\| \le \|\iotaG(f)\|_{C^*_r(\ginf)}$. So $\iotaG$ is
isometric for the reduced norms on $C_c(G)$ and $C_c(\ginf)$.
\end{proof}

Recall that $H_\infty$ is the groupoid of the graph with one vertex $v$ and infinitely
many edges $\{e_i \colon i\in \NN\}$. For $i\in \NN$, let $x_i := 1_{Z(e_i,v)} \in
C_c(H_\infty)$. Define
\[
    X:=\clsp\{x_i \times f : f\in C_c(G), i\in \NN\} \subseteq \cs(\ginf).
\]
We will show that $X$ is a $C^*$-correspondence over $\cs(G)$ and then apply
\cite[Theorem~3.1]{FR99} to show that $\cs(\ginf) \cong \Tt_X$.

\begin{lem}\label{lem:module}
Let $G$ be an amenable, locally compact, Hausdorff, \'etale groupoid, $\alpha$ an
automorphism of $G$, and \defginf. With
$$a \cdot x:=\iotaG(a)x, \qquad x \cdot a:=x\iotaG(a), \qquad \textrm{and}\qquad \langle x,y
\rangle_{\cs(G)}:=\iotaG\inv (x^*y)$$
for $x,y \in X$ and $a\in \cs(G)$, the space $X$ is a
$\cs$-correspondence over $\cs(G)$.
\end{lem}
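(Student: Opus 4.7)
The plan is to verify in turn that the prescribed operations make $X$ into a right Hilbert $\cs(G)$-module, and then that $\phi(a)x:=\iotaG(a)x$ defines a $^*$-homomorphism $\phi\colon\cs(G)\to\Ll(X)$. All non-trivial checks will be carried out first on the dense spanning set $\{x_i\times f:i\in\NN,\,f\in C_c(G)\}$ and then extended by linearity and continuity. The main leverage comes from Lemma~\ref{lem:embedding}: since $\iotaG$ is an isometric $^*$-embedding of $\cs(G)$ into $\cs(\ginf)$, module-theoretic statements about $X$ will reduce to $C^*$-algebraic identities inside $\cs(\ginf)$.

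The heart of the proof is three identities in $C_c(\ginf)$, all obtained by direct application of the multiplication and involution formulae from Lemma~\ref{lem:twisted product}, using that $c(h)=1$ on $Z(e_i,v)$ and $c(h)=-1$ on $Z(v,e_i)$. Writing $\alpha_*$ for the $^*$-automorphism of $\cs(G)$ induced by $\alpha$ (so $(\alpha_*f)(g)=f(\alpha^{-1}(g))$), I would establish, for $f,g,a\in C_c(G)$ and $i,j\in\NN$, that
\begin{align*}
\iotaG(a)(x_i\times f) &= x_i\times (a * f), \\
(x_i\times f)\iotaG(a) &= x_i\times (f * \alpha_*^{-1}(a)), \\
(x_i\times f)^*(x_j\times g) &= \delta_{ij}\,\iotaG\!\left(\alpha_*(f^* * g)\right).
\end{align*}
The cocycle factor $\alpha^{-c(h)}$ appearing in the multiplication on $\ginf$ is what produces the twist by $\alpha_*$ in the second and third lines, and I expect the bookkeeping of this twist---together with the change of variable $k_2'=\alpha^{\pm 1}(k_2)$ needed to recognise the resulting sums as honest convolutions in $C_c(G)$---to be the one place where it is easy to slip up. In particular, for $i\ne j$ the third identity is zero because the composability requirement $s(h_1)=r(h_2)$ forces the leading edges to agree.

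Once these identities are in hand, the module structure follows quickly. The second identity together with the density of $C_c(G)$ in $\cs(G)$ gives $X\cdot\iotaG(\cs(G))\subseteq X$. The third identity shows that $x^*y\in\iotaG(\cs(G))$ for spanning $x,y$; since $\iotaG(\cs(G))$ is closed in $\cs(\ginf)$, the same holds for all $x,y\in X$, so $\langle x,y\rangle_{\cs(G)}:=\iotaG\inv(x^*y)$ is well defined. The inner-product axioms (sesquilinearity, conjugate symmetry, positivity, and faithfulness) transfer from the corresponding $C^*$-algebraic properties of $\cs(\ginf)$ via the injective $^*$-homomorphism $\iotaG$; compatibility with the right action, $\langle x,y\cdot a\rangle=\langle x,y\rangle\,a$, is a direct consequence of associativity of the product in $\cs(\ginf)$. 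Completeness is automatic: since $\iotaG$ is isometric, $\|\langle x,x\rangle_{\cs(G)}\|=\|x^*x\|=\|x\|^2$, so the module norm on $X$ coincides with the ambient $\cs(\ginf)$-norm, in which $X$ is closed by definition.

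Finally, the first identity gives $\iotaG(a)X\subseteq X$ for $a\in C_c(G)$, and hence for all $a\in\cs(G)$ by norm continuity of left multiplication. Thus $\phi(a)$ is a well-defined bounded linear operator on $X$, and $\phi$ is a $^*$-homomorphism because it is the restriction of left multiplication by $\iotaG(\cs(G))$ inside $\cs(\ginf)$. Adjointability of each $\phi(a)$ is immediate from $\iotaG(a)^*=\iotaG(a^*)$ together with associativity:
\[
\langle \iotaG(a)x,y\rangle_{\cs(G)}
= \iotaG\inv(x^*\iotaG(a^*)y)
= \langle x,\iotaG(a^*)y\rangle_{\cs(G)},
\]
so $\phi(a)^*=\phi(a^*)\in\Ll(X)$, completing the verification that $X$ is a $\cs$-correspondence over $\cs(G)$.
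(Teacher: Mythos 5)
Your proposal is correct and follows essentially the same route as the paper: the same three convolution identities on the spanning elements $x_i\times f$ (with the same $\delta_{ij}$ and $\alpha$-twists, which you have computed with the right signs), isometry of $\iotaG$ from Lemma~\ref{lem:embedding} to identify the module norm with the ambient norm and obtain completeness, and the observation that the remaining axioms are inherited from $\cs(\ginf)$ viewed as a correspondence over itself. The only difference is that you spell out the Hilbert-module axioms and adjointability of the left action explicitly where the paper compresses them into that last observation.
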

\begin{proof}
We first show that for $x,y \in X$, we have $x^*y \in \iota_G(\cs(G))$. Since convolution
and involution are continuous and linear, it suffices to consider $x=x_i \times f$ and
$y=x_j \times f'$. We have
\begin{align}
(x_i\times f)^**(x_j \times f')(h,g)
	&=\sum_{(h_1,g_1)(h_2,g_2)=(h,g)} \overline{(x_i\times f)((h_1,g_1)^{-1})}(x_j \times f')(h_2,g_2) \label{eq:comp}\\
	&=\sum_{(h_1,g_1)(h_2,g_2)=(h,g)} \overline{(x_i\times f)(h_1^{-1},\alpha^{c(h_1)}(g_1^{-1}))}(x_j \times f')(h_2,g_2)\notag\\
    &= \sum_{(h_1,g_1)(h_2,g_2)=(h,g)} 1_{Z(v,e_i)}(h_1) \overline{\big(f\circ \alpha^{c(h_1)}\big)(g_1^{-1})} 1_{Z(e_j,v)}(h_2)f'(g_2)\notag\\
    &= \sum_{((r(h),-1,e_ir(h)),g_1)((e_js(h),1,s(h)),g_2) = (h,g)} (f\circ\alpha^{-1})^*(g_1)f'(g_2)\notag\\
    &= \delta_{i, j} 1_{\hio}(h) \sum_{g_1 \alpha(g_2) = g}  (f\circ\alpha^{-1})^*(g_1)f'(g_2)\notag\\
	&= \delta_{i, j} 1_{\hio}(h) \sum_{g_1 g_2' = g}  (f\circ\alpha^{-1})^*(g_1)(f'\circ\alpha^{-1})(g_2')\notag\\
    &= \delta_{i, j} \iotaG\big((f\circ \alpha^{-1})^**(f'\circ \alpha^{-1})\big)(h,g), \notag
\end{align}
which belongs to $\iotaG(\cs(G))$ as claimed. In particular the inner product is well
defined. Since $\iota_G$ is isometric by Lemma~\ref{lem:embedding}, The inner-product
norm on $X$ agrees with the $\cs$-norm on $\cs(\ginf)$. So $X$ is complete.

Now we check that $\iotaG(\cs(G)) X$ and $X \iotaG(\cs(G))$ are contained in $X$. For
$f,f' \in C_c(G)$ and $i \in \NN$ we have
\begin{align}
(f'\cdot (x_i\times f)) (h,g)
    &= \big(\iotaG(f')*(x_i \times f)\big) (h,g)\label{eq:comp2}\\
    &= \sum_{(h_1,g_1)(h_2,g_2)=(h,g)} (1_{\hio}\times f')(h_1,g_1)(1_{Z(e_i,v)} \times f )(h_2,g_2)\notag\\
    &= \sum_{g_1\alpha^{-c(r(h))}(g_2) = g}  f'(g_1)1_{Z(e_i,v)}(h)f(g_2)\notag\\
    &= 1_{Z(e_i,v)}(h)\sum_{g_1g_2=g} f'(g_1)f(g_2)
     = \big(x_i \times (f'*f)\big)(h,g).\notag
\end{align}
Thus by continuity of multiplication in $\cs(\ginf)$, we have $\iotaG(\cs(G)) X
\subseteq X$. Similarly,
\begin{align*}
((x_i \times f) \cdot f')(h,g)
    &= \sum_{(h_1,g_1)(h_2,g_2)=(h,g)} (1_{Z(e_i,v)} \times f )(h_1,g_1)(1_{\hio}\times f')(h_2,g_2)\\
    &= \sum_{g_1\alpha^{-1}(g_2) = g}  1_{Z(e_i,v)}(h) f(g_1)f'(g_2)\\
    &= 1_{Z(e_i,v)}(h)\sum_{g_1g'_2=g} f(g_1)f'(\alpha(g'_2))\\
    &= \big(x_i\times (f*(f'\circ \alpha))\big)(h,g).
\end{align*}
Since $\alpha \in \Aut(G)$ we have $f' \circ \alpha \in C_c(G)$. So continuity gives $X
\iotaG(\cs(G)) \subseteq X$.

The $C^*$-algebra $\cs(\ginf)$ is a correspondence over itself with actions given by
multiplication and inner-product $(a,b) \mapsto a^*b$, and we have just showed that $X
\subseteq \cs(\ginf)$ is an inner-product bimodule over $\cs(G) \subseteq \cs(\ginf)$ under
the inherited operations. Thus $X$ is a correspondence over $\cs(G)$ as required.
\end{proof}

To fix notation, recall that $\Tt_X$ is generated by a universal representation $(i_X,
i_{\cs(G)})$ of $X$. Moreover, every Toeplitz representation $(\psi,\pi)$ of $X$ in
$B(\Hh)$ induces a homomorphism $\psi\times \pi\colon \Tt_X \to B(\Hh)$ satisfying
$(\psi\times \pi)\circ i_X= \psi$ and $(\psi\times \pi)\circ i_{\cs(G)}=\pi$
(\cite[Proposition 1.3]{FR99}).

\begin{lem}\label{lem:4.3}
Let $X$ be a $C^*$-correspondence over a $C^*$-algebra $A$. Suppose that
$X=\bigoplus_{i\in\NN}X_i$ and that for each nonzero $a\in A$ the set $\{i\in\NN\colon
a\cdot X_i\neq \{0\}\}$ is infinite. Suppose that $(\psi,\pi)$ is a representation of $X$
in $B(\Hh)$ such that $\pi$ is injective. Then $\psi\times \pi$ is injective.
\end{lem}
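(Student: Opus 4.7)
The plan is to apply a gauge-invariant uniqueness theorem for $\Tt_X$ in the spirit of \cite[Theorem~2.1]{FR99}: writing $\sigma := \psi \times \pi$, it suffices to construct a strongly continuous gauge action $\beta : \TT \to \Aut(\sigma(\Tt_X))$ compatible with the canonical gauge action $\gamma$ on $\Tt_X$ (determined on generators by $\gamma_z(i_X(x)) = z\, i_X(x)$ and $\gamma_z(i_A(a)) = i_A(a)$), and then invoke the hypothesised injectivity of $\pi$.

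To construct $\beta$, it suffices to show that $\ker \sigma \trianglelefteq \Tt_X$ is $\gamma$-invariant, for then the automorphism $\beta_z := \bar\gamma_z$ induced on the quotient $\sigma(\Tt_X) \cong \Tt_X/\ker\sigma$ is well-defined and inherits strong continuity from $\gamma$. This is where the direct sum hypothesis enters. The orthogonality $\psi(x)^*\psi(y) = \pi(\langle x, y\rangle_A) = 0$ for $x \in X_i$, $y \in X_j$ with $i \neq j$, combined with the hypothesis that the left action $\phi(a)|_{X_i}$ is nonzero on infinitely many $i$ whenever $a \in A \setminus \{0\}$, furnishes a rich supply of ``wandering'' elements in $\sigma(\Tt_X)$: for each nonzero $a \in A$ and each $n$, one can choose indices $i$ outside any prescribed finite set on which $\pi(a)$ acts nontrivially through $\psi(X_i)$. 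I would use this supply to argue that the degree-$n$ component of $\sigma(t)$ is intrinsically determined by $\sigma(t)$ alone---independent of which preimage $t \in \Tt_X$ is chosen---so that the assignment $\beta_z(\sigma(t)) := \sigma(\gamma_z(t))$ is unambiguous. I expect this separation-of-degrees step to be the main obstacle, with the infinite-multiplicity hypothesis providing the ``room'' needed to handle every degree simultaneously.

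Once $\beta$ is available the proof concludes by a standard averaging argument. Let $E : \Tt_X \to \Tt_X^{\gamma}$ and $F : \sigma(\Tt_X) \to \sigma(\Tt_X)^{\beta}$ be the faithful conditional expectations obtained by integrating $\gamma$ and $\beta$ against normalised Haar measure on $\TT$; compatibility of $\beta$ with $\gamma$ gives $F \circ \sigma = \sigma \circ E$. For $t \geq 0$ with $\sigma(t) = 0$ one has $\sigma(E(t)) = F(\sigma(t)) = 0$, and injectivity of $\sigma$ on the fixed-point algebra $\Tt_X^{\gamma}$---which reduces, via the standard filtration of $\Tt_X^{\gamma}$ by closed spans of ``balanced'' monomials $i_X(x_1)\cdots i_X(x_n) i_X(y_n)^* \cdots i_X(y_1)^*$, to a level-wise check in which injectivity of $\pi$ together with orthogonality of the $X_i$ is exactly what is needed---then forces $E(t) = 0$; faithfulness of $E$ yields $t = 0$. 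The general case reduces to the positive one by applying this argument to $t^*t$.
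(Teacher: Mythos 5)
Your architecture (average over the circle to get an expectation onto the core, prove faithfulness on the core, reduce to positive elements) is the standard machinery behind uniqueness theorems of this type, but as it stands the proposal has a genuine gap, beginning with its headline reduction. There is no gauge-invariant uniqueness theorem for $\Tt_X$ of the form you invoke: the quotient of the classical Toeplitz algebra (which is $\Tt_X$ for $X=A=\CC$) onto $C(\TT)=\OO_X$, obtained by sending the generating isometry to a unitary, is gauge-equivariant and injective on $A$, yet not injective. So exhibiting a compatible gauge action $\beta$ on $\sigma(\Tt_X)$ and invoking injectivity of $\pi$ cannot suffice; the entire content lies in the two steps you defer with ``I would argue'' and ``I expect this to be the main obstacle'', namely (i) $\gamma$-invariance of $\ker\sigma$ and (ii) faithfulness of $\sigma$ on the core. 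Neither is carried out; for an abstract representation there is no ambient circle action with which to verify (i), and establishing it is essentially the norm estimate $\|E(t)\|\le\|\sigma(t)\|$ that is the crux of any such theorem. Your account of (ii) is also wrong on the decisive point: injectivity of $\pi$ plus orthogonality of the $X_i$ is \emph{not} ``exactly what is needed''---the Toeplitz example above satisfies both (take $X_0=\CC$ and $X_i=\{0\}$ for $i\ge1$) and fails on the core. The indispensable ingredient there is the infinite-multiplicity hypothesis: it forces $\phi(a)\notin\Kk(X)$ for every nonzero $a$ (any $T\in\Kk(X)$ satisfies $\|T|_{X_j}\|\to0$ as $j\to\infty$, so if $\phi(a)\in\Kk(X)$ then the functional-calculus perturbations $af_\epsilon(a^*a)$ would annihilate all but finitely many $X_j$, hence vanish, forcing $a=0$), and that is precisely what excludes the accidental Cuntz--Pimsner covariance $\pi(a)=\psi^{(1)}(k)$ that would destroy faithfulness on the core.

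The paper avoids all of this by quoting a ready-made uniqueness theorem for Toeplitz algebras, \cite[Theorem~3.1]{FR99}, whose hypothesis is that the compression of $\pi$ to $\big(\psi(\bigoplus_{j\in F}X_j)\Hh\big)^\perp$ be faithful for every finite $F\subseteq\NN$, and verifying that hypothesis in three lines: given $a\ne0$ and finite $F$, choose $i\notin F$ and $x\in X_i$ with $a\cdot x\ne0$; since $\pi$ is injective, $\psi$ is isometric, so $\psi(a\cdot x)h\ne0$ for some $h\in\Hh$; and the orthogonality $\psi(x)^*\psi(y)=\pi(\langle x,y\rangle_A)=0$ for $y\in X_j$ with $j\in F$ places both $\psi(x)h$ and $\pi(a)\psi(x)h=\psi(a\cdot x)h$ in the required complement. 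If you wish to avoid that black box you will essentially have to reprove it, and the hypothesis about infinitely many $i$ must be routed into the core computation rather than only into the construction of $\beta$.
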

\begin{proof}
Fix a finite set $F\subseteq\NN$. By \cite[Theorem~3.1]{FR99}, $\psi\times \pi$ is
injective provided that the compression of $\pi$ to $(\psi(\oplus_{j\in F}X_j)\Hh)^\bot$
is faithful. Fix $a\in A \setminus \{0\}$. Take $i\not\in F$ and $x\in X_i$ such that
$a\cdot x\neq 0$. Since $\pi$ is injective if follows from \cite[Remark 1.1]{FR99} that
$\psi$ is isometric. We can therefore find $h\in \Hh$ satisfying $\psi(a\cdot x)h\neq 0$.
For each $j\in F$, $y\in X_j$ and $k\in\Hh$ we have $\big(\psi(x)h \mid \psi(y)k\big)
    = \big(h \mid \psi(x)^*\psi(y)k\big)
    = \big(h \mid \pi(\langle x,y \rangle_{\cs(G)})k\big)
    = 0$,
so
\[
    0\neq \psi(a\cdot x)h=\pi(a)\psi(x)h\in \pi(a)(\psi(\oplus_{i\in F}X_i)\Hh)^\bot.
\]
We conclude that the compression of $\pi$ to $(\psi(\oplus_{j\in F}X_j)\Hh)^\bot$ is
faithful.
\end{proof}

Define $X_i:=\clsp\{x_i \times f : f\in C_c(G)\}$ for each $i\in \NN$, with the
module structure and left action inherited from $X$.

\begin{lem}\label{lem:graded}
Let $G$ be an amenable, locally compact, Hausdorff, \'etale groupoid, $\alpha$ an
automorphism of $G$, and \defginf. For each $i\in \NN$, the space $X_i$ is
$C^*$-correspondence over $\cs(G)$ and the module $X$ of Lemma~\ref{lem:module} is
isomorphic to $\bigoplus_i X_i$.
\end{lem}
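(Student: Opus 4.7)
The plan is as follows. I would first verify that each $X_i$ is a $\cs$-correspondence over $\cs(G)$ in its own right; second, establish pairwise orthogonality $\langle X_i, X_j\rangle_{\cs(G)} = 0$ for $i \neq j$; third, observe that $X$ is, by definition, the closed linear span of $\bigcup_i X_i$. Together these three properties identify $X$ with the internal direct sum of the $X_i$, which is isometrically isomorphic to the external Hilbert-module direct sum $\bigoplus_i X_i$, and the left action passes through because it preserves each $X_i$.

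For step one, the calculations \eqref{eq:comp} and \eqref{eq:comp2} in the proof of Lemma~\ref{lem:module} do essentially all the work. Taking $j=i$ in \eqref{eq:comp} shows that $(x_i\times f)^* \ast (x_i\times f') = \iotaG\big((f\circ\alpha^{-1})^*\ast(f'\circ\alpha^{-1})\big) \in \iotaG(\cs(G))$, so the $\cs(G)$-valued inner product restricts to $X_i$. Equation \eqref{eq:comp2} gives $\iotaG(f')\cdot(x_i\times f) = x_i\times(f'\ast f) \in X_i$, and the analogous right-action computation (already carried out in Lemma~\ref{lem:module}) gives $(x_i\times f)\cdot\iotaG(f') = x_i\times(f\ast(f'\circ\alpha)) \in X_i$. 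Continuity, linearity, and closedness of $X_i$ then show that these actions preserve $X_i$, so $X_i$ is a $\cs$-correspondence over $\cs(G)$.

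For step two, the Kronecker delta $\delta_{i,j}$ in \eqref{eq:comp} says precisely that $(x_i\times f)^* \ast (x_j\times f') = 0$ whenever $i\neq j$. By continuity and sesquilinearity of the inner product, $\langle X_i, X_j\rangle_{\cs(G)} = \{0\}$ for $i\neq j$. For step three, the inclusions $X_i \hookrightarrow X$ assemble (using pairwise orthogonality) into a well-defined isometry $\bigoplus_i X_i \to X$: given $(\xi_i) \in \bigoplus_i X_i$, the partial sums $\sum_{i\in F}\xi_i$ form a Cauchy net in $X$ because orthogonality forces $\big\|\sum_{i\in F}\xi_i\big\|^2 = \big\|\sum_{i\in F}\langle\xi_i,\xi_i\rangle_{\cs(G)}\big\|$, and summability in the external direct sum is exactly the statement that these sums converge in $\cs(G)$. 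The image of this isometry contains each $X_i$, hence the dense set $\sum_i X_i$, so it is surjective. Since the left action of $\iotaG(\cs(G))$ preserves each $X_i$, the isometry is also $\cs(G)$-bilinear, giving an isomorphism of correspondences.

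The main obstacle is essentially non-existent once Lemma~\ref{lem:module} is in place; the content of the argument is entirely contained in the computations \eqref{eq:comp} and \eqref{eq:comp2}, and the rest is the standard internal/external direct-sum dictionary for Hilbert $\cs$-modules. The only mildly delicate point is the convergence statement for $\bigoplus_i X_i$, which uses that in a Hilbert $A$-module, mutually orthogonal vectors $\eta_i$ have $\sum_i\eta_i$ convergent iff $\sum_i\langle\eta_i,\eta_i\rangle_A$ converges in $A$; this is a standard fact about Hilbert modules that requires no new input here.
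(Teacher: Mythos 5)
Your proposal is correct and follows essentially the same route as the paper's proof: each $X_i$ is a correspondence by the computations of Lemma~\ref{lem:module}, the Kronecker delta in~\eqref{eq:comp} gives pairwise orthogonality, and the summation map from the (external) direct sum to $X$ is an isometric bimodule isomorphism onto the closure of $\sum_i X_i = X$. Your additional care with the convergence criterion for the external direct sum is exactly the "routine" verification the paper elides.
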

\begin{proof}
That each $X_i$ is a correspondence follows from the argument of Lemma~\ref{lem:module}.
By (\ref{eq:comp}) it follows that $\langle x,y\rangle_{\cs(G)}=0$ for $x\in X_i$, $y\in
X_j$ and $i\neq j$. Using this, it is routine to check that the map from the algebraic
direct sum of the $X_i$ to $X$ that carries $(\xi_i)^\infty_{i=1}$ to $\sum^\infty_{i=1}
\xi_i$ preserves the inner product and so is isometric. Since the module actions in both
$X$ and $\bigoplus X_i$ are implemented by multiplication in $\cs(\ginf)$, the map $(x_i)
\mapsto \sum x_i$ is also a bimodule map, completing the proof.
\end{proof}

\begin{lem}\label{lem:4.5}
Let $G$ be an amenable, locally compact, Hausdorff, \'etale groupoid, $\alpha$ an
automorphism of $G$, and \defginf.  Suppose $X$ and  $X_i$  are the $C^*$-correspondences
of Lemma~\ref{lem:graded}. For each $a\in C^*(G)$ there exist $f \in C_0(\go)$ such that
\[
    \|a \cdot (x_i \times f)\|\geq \frac{\|a\|}{3}
\]
for all $i\in\NN$. In particular for $a\neq 0$, the set $\{i\in\NN\colon a\cdot X_i\neq
\{0\}\}$ is infinite.
\end{lem}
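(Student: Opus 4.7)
The plan is to observe that $\|a\cdot(x_i\times f)\|$ does not actually depend on $i$, and in fact equals $\|af\|_{C^*(G)}$, so the problem reduces to finding a single $f\in C_0(\go)$ with $\|af\|\ge\|a\|/3$. This last step is a standard approximate-identity argument, exploiting the fact that $C_0(\go)$ sits inside $C^*(G)$ isometrically because $G$ is \'etale and Hausdorff.

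First, I would extend the two computations \eqref{eq:comp} and \eqref{eq:comp2} from $C_c(G)$ to all of $C^*(G)$. Specialising \eqref{eq:comp} to $j=i$ and $f=f'$ gives $\langle x_i\times f, x_i\times f\rangle_{C^*(G)} = (f\circ\alpha^{-1})^*(f\circ\alpha^{-1})$. Since $\alpha$ is an automorphism of $G$, the map $g\mapsto g\circ\alpha^{-1}$ is a $*$-automorphism of $C^*(G)$, so $\|x_i\times f\|=\|f\|_{C^*(G)}$. Thus the linear map $f\mapsto x_i\times f$ is isometric on $C_c(G)$ and extends to an isometry $\iota_i\colon C^*(G)\to X_i$. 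From \eqref{eq:comp2}, $f'\cdot(x_i\times f)=x_i\times(f'*f)$, and by continuity of the left action this extends to $a\cdot\iota_i(b)=\iota_i(ab)$ for all $a,b\in C^*(G)$. Consequently, for every $f\in C_c(G)$,
\[
\|a\cdot(x_i\times f)\|=\|\iota_i(af)\|=\|af\|_{C^*(G)},
\]
which is independent of $i$.

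It remains to produce $f\in C_0(\go)$ with $\|af\|_{C^*(G)}\ge\|a\|/3$. I would choose $a'\in C_c(G)$ with $\|a-a'\|<\|a\|/3$, let $K:=s(\supp a')\subseteq\go$ (which is compact since $s$ is continuous), and use Urysohn's lemma in the locally compact Hausdorff space $\go$ to pick $f\in C_c(\go)$ with $0\le f\le 1$ and $f\equiv 1$ on $K$. Since $\go$ is open in the \'etale groupoid $G$, inclusion $C_0(\go)\hookrightarrow C^*(G)$ is isometric, so $\|f\|_{C^*(G)}=\|f\|_\infty\le 1$. A direct computation shows that for $h\in C_c(\go)$, convolution reduces to $(a'*h)(g)=a'(g)h(s(g))$, so the choice of $f$ gives $a'*f=a'$. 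Then
\[
\|af-a\|\le\|(a-a')*f\|+\|a'*f-a'\|+\|a'-a\|\le 2\|a-a'\|<\tfrac{2}{3}\|a\|,
\]
whence $\|af\|\ge\|a\|-\tfrac{2}{3}\|a\|=\|a\|/3$. Combined with the previous paragraph, this yields $\|a\cdot(x_i\times f)\|\ge\|a\|/3$ for every $i$, and for $a\ne 0$ this gives $a\cdot X_i\ne\{0\}$ for \emph{every} $i\in\NN$, which is certainly infinite.

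The main obstacle is really just the bookkeeping in the first paragraph: one must be careful that the extension-by-continuity arguments produce the identification $\|a\cdot(x_i\times f)\|=\|af\|$ rigorously, and in particular that $\alpha$ induces an isometric map on $C^*(G)$ so that the norm $\|x_i\times f\|=\|f\|$ does not degenerate. Once this independence-of-$i$ is in hand, the rest is the routine observation that any element of $C^*(G)$ is essentially unchanged under right multiplication by a $C_0(\go)$-cutoff supported appropriately near the relevant compact set in the unit space.
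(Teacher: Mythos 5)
Your proposal is correct and follows essentially the same route as the paper: approximate $a$ by $a_0\in C_c(G)$ within $\|a\|/3$, choose a cutoff $f\in C_c(\go)$ equal to $1$ on $s(\supp a_0)$ so that $a_0*f=a_0$, and use the computations \eqref{eq:comp} and \eqref{eq:comp2} together with the fact that $b\mapsto b\circ\alpha^{-1}$ is isometric to see that $\|x_i\times b\|=\|b\|$ independently of $i$. The only cosmetic difference is that you run the triangle inequality inside $C^*(G)$ (bounding $\|af-a\|$) and then transport it through the isometry $b\mapsto x_i\times b$, whereas the paper estimates directly in the module; the content is identical.
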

\begin{proof}
Fix $a\in \cs(G)$. Choose $a_0 \in C_c(G)$ satisfying $\|a - a_0\| \leq \|a\|/3$. Then
the set
\[
    s(\supp(a_0)) \subseteq \go
\]
is compact, so we can choose $f \in C_c(\go)$ such that $\|f\| = 1$ and
$f|_{s(\supp(a_0))} \equiv 1$, and hence $a_0 * f = a_0$. The computations
\eqref{eq:comp}~and~\eqref{eq:comp2} show that
\[
    \|a_0 \cdot (x_i \times f)\| = \|x_i \times (a_0 * f)\|= \|(a_0 * f) \circ \alpha^{-1}\|=\|a_0 \circ \alpha^{-1}\| = \|a_0\|
\]
for every $i \in \NN$. Since $\|a - a_0\| \leq \|a\|/3$, we have $\|a_0\| \ge 2\|a\|/3$,
and so for each $i$ we obtain $\|a_0 \cdot(x_i \times f)\| \ge 2\|a\|/3$. So for $i \in
\NN$, we have
\begin{equation}\label{eq:a-estimate}
\|a \cdot (x_i \times f)\|
    \ge \|a_0 (x_i \times f)\| - \|(a - a_0)(x_i \times f)\|
	\ge  2\|a\|/3 - \|(a - a_0)(x_i \times f)\|.
\end{equation}
Since $\|f\| = 1$, each $\|x_i \times f\| = \|f\circ \alpha^{-1}\|=1$, and since $\|a -
a_0\| \leq \|a\|/3$, we deduce that each $\|(a - a_0)(x_i \times f)\| \le \|a\|/3$. This
and~\eqref{eq:a-estimate} give $\|a\cdot (x_i \times f)\| \geq 2\|a\|/3 - \|a\|/3 =
\|a\|/3$ for all $i$. So, for $a\neq 0$, $\{i\in\NN\colon a\cdot X_i\neq \{0\}\}=\NN$,
which is certainly infinite.
\end{proof}	

We can now prove that $\Tt_X \cong \cs(\ginf)$. First observe that if $\phi\colon
C^*(\ginf)\to B(\Hh)$ is a representation, then we obtain a Toeplitz representation
$(\psi,\pi)$ of $X$ in $\Bb(\Hh)$ by setting
\begin{equation}\label{eq:repdef}
    \psi(x)= \phi(x) \qquad\text{ and }\qquad \pi(a)= \phi(\iotaG(a)).
\end{equation}

\begin{prp} \label{prop:tiscsg}
Let $G$ be an amenable, locally compact, Hausdorff, \'etale groupoid, $\alpha$ an
automorphism of $G$, and \defginf.  Suppose $X$  is the $C^*$-correspondence of
Lemma~\ref{lem:module}. Then there is an isomorphism $\rho : \cs(\ginf) \to  \Tt_X$ such
that $\rho \circ \iotaG = i_{\cs(G)}$.
\end{prp}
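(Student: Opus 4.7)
The plan is to construct a homomorphism $\sigma\colon \Tt_X \to \cs(\ginf)$ using the universal property of $\Tt_X$, show that $\sigma$ is bijective, and then take $\rho := \sigma^{-1}$. Concretely, I would observe that the pair $(\psi,\pi)$ given by $\psi := \text{inclusion}\colon X \hookrightarrow \cs(\ginf)$ and $\pi := \iotaG\colon \cs(G) \to \cs(\ginf)$ is a Toeplitz representation of $X$ in $\cs(\ginf)$: the bimodule relations $\pi(a)\psi(x) = \psi(a\cdot x)$ and $\psi(x)\pi(a) = \psi(x\cdot a)$ hold tautologically because the left and right actions on $X$ were defined in Lemma~\ref{lem:module} by multiplication by $\iotaG(a)$ inside $\cs(\ginf)$, and the inner-product relation $\psi(x)^*\psi(y) = \pi(\langle x,y\rangle_{\cs(G)})$ is exactly the definition $\langle x,y\rangle_{\cs(G)} = \iotaG^{-1}(x^*y)$. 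The universal property then yields $\sigma = \psi\times\pi$, and, once $\sigma$ is shown to be an isomorphism, setting $\rho := \sigma^{-1}$ gives $\rho \circ \iotaG = \rho \circ \sigma \circ i_{\cs(G)} = i_{\cs(G)}$ automatically.

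For injectivity of $\sigma$ I would invoke Lemma~\ref{lem:4.3} applied to the grading $X = \bigoplus_i X_i$ of Lemma~\ref{lem:graded}. The hypothesis that $\pi$ be injective is Lemma~\ref{lem:embedding}, and the hypothesis that $\{i\in\NN : a\cdot X_i \neq \{0\}\}$ be infinite for every nonzero $a\in\cs(G)$ is exactly the conclusion of Lemma~\ref{lem:4.5}. Hence $\sigma = \psi\times\pi$ is injective, and therefore isometric, so $\sigma(\Tt_X)$ is a closed $C^*$-subalgebra of $\cs(\ginf)$.

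For surjectivity the strategy is to show that $\sigma(\Tt_X)$ contains a dense subset of $C_c(\ginf)$. The image obviously contains $\iotaG(C_c(G))$ together with every $x_i\times f$ for $f\in C_c(G)$, and because it is a $^*$-algebra it also contains every finite product of such elements and their adjoints. Repeated application of the twisted-convolution computations of Lemma~\ref{lem:module} (cf.\ \eqref{eq:comp} and \eqref{eq:comp2}), combined with an approximate unit for $\cs(G)$ to handle the fact that $\go$ need not be compact, will yield that $1_{Z(\alpha,\beta)} \times f$ lies in $\sigma(\Tt_X)$ for every $\alpha,\beta\in E^*$ with $s(\alpha)=s(\beta)$ and every $f\in C_c(G)$; the function $1_{Z((\alpha,\beta)\setminus F)}\times f$ then lies in $\sigma(\Tt_X)$ by writing it as $1_{Z(\alpha,\beta)}\times f - \sum_{e\in F} 1_{Z(\alpha e,\beta e)}\times f$. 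A standard partition-of-unity/inductive-limit argument shows that the linear span of such functions is dense in $C_c(\ginf) = C_c(H_\infty \times G)$ in the inductive-limit topology, hence dense in $\cs(\ginf)$.

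The main obstacle is executing the surjectivity step cleanly. Two technical issues demand care: first, the twisted multiplication $(h_1,g_1)(h_2,g_2) = (h_1h_2,\, g_1\alpha^{-c(h_1)}(g_2))$ means that every product of generators in $\cs(\ginf)$ pulls the $G$-factors through powers of $\alpha$, so keeping track of which $f\circ\alpha^k$ appears requires careful induction on the lengths of $\alpha,\beta$; second, the absence of a unit in $\cs(G)$ when $\go$ is non-compact forces one to replace heuristic identities like $(x_{i_1}\times 1_{\go})\cdots(x_{i_n}\times 1_{\go}) = 1_{Z(e_{i_1}\cdots e_{i_n},v)}\times 1_{\go}$ by limit arguments along an approximate unit. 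Neither issue is deep, but both must be handled explicitly to conclude that $\sigma$ hits every basis function of the form $1_{Z((\alpha,\beta)\setminus F)}\times f$.
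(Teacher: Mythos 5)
Your proposal is correct and follows essentially the same route as the paper: the paper also builds the Toeplitz representation from the inclusion of $X$ and $\iotaG$ (composed with a faithful representation $\phi$ of $\cs(\ginf)$, which you should do too so that Lemma~\ref{lem:4.3}, stated for representations on Hilbert space, literally applies), deduces injectivity from Lemmas \ref{lem:4.3}, \ref{lem:graded} and~\ref{lem:4.5}, and proves surjectivity by showing the $^*$-algebra generated by $X$ is dense in $C_c(\ginf)$ via a decomposition into functions supported on basic bisections plus Stone--Weierstrass. The technical points you flag about the twisted convolution and non-compact $\go$ are handled in the paper by exactly the kind of argument you sketch.
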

\begin{proof}
Choose a faithful representation $\phi$ of $\cs(\ginf)$, and let $(\psi,\pi)$ be the
representation of~\eqref{eq:repdef}. Lemmas \ref{lem:4.3}~and~\ref{lem:4.5} show that the
induced representation $\psi \times \pi$ of $\Tt_X$ is faithful. So it suffices to show
that the image of $\psi \times \pi$ is equal to the image of $\phi$. Clearly we have
$$(\psi \times \pi)(\Tt_X)=C^*\big(i_X(X)\cup i_{\cs(G)}(\cs(G))\big)
=C^*(\phi(X\cup \cs(G)))\subseteq \phi(C^*(\ginf)).$$ To see that $\im \psi \times \pi =
\im \phi$, it suffices to show that the $^*$-subalgebra $\mathcal{A}$ of $C_c(\ginf)$
generated by $X$ is dense in $\cs(\ginf)$. Let
\[
    \mathcal{C}:= \{Z(\mu,\nu) \times U \mid \mu,\nu \in E^* \text{ and } U \text{ is
	an open bisection in }G\},
\]
which is a collection of open bisections of $\ginf$. Note that $\mathcal{A}$ contains all
the functions (with support in $\mathcal{C}$) of the form $1_{Z(\mu,\nu)} \times f$ where
$\supp f \subseteq U$ for some open bisection $U \subseteq G$.

Proposition~3.14 of \cite{Exel} says that each element of $C_c(\ginf)$ can be written as
a finite sum of the form $\sum_n f_n$ where each $f_n$ is supported on some $B_n \in
\mathcal{C}$.  Thus it suffices to show that for $B\in \mathcal{C}$, any element of
$C_c(\ginf)$ supported on $B$ can be approximated by functions in $\mathcal{A}$. We have
$\|f\|_\infty\leq \|f\|$ for all $f\in C_c(\ginf)$ (see, for example,
\cite[Lemma~2.1(1)]{BCS}), and Proposition~3.14 of \cite{Exel} gives the reverse
inequality for $f$ supported on $B\in \mathcal{C}$. So the result follows from the
Stone--Weierstrass theorem.

Finally with $\rho:=(\psi \times \pi)^{-1}\circ \phi$, we use $(\psi \times \pi)\circ i_{\cs(G)}=\pi$
and $\pi=\phi\circ\iotaG$ to deduce that $\rho \circ \iotaG = (\psi \times \pi)^{-1}\circ \pi = i_{\cs(G)}$.
\end{proof}

\begin{cor} \label{cor:samek}
Let $G$ be a second-countable, amenable, locally compact, Hausdorff, \'etale groupoid,
$\alpha$ an automorphism of $G$, and \defginf. Then the $KK$-class of the map $\iotaG :
\cs(G) \to \cs(\ginf)$ of Lemma~\ref{lem:embedding} is a $KK$-equivalence. In particular,
$K_*(\iotaG) : K_*(\cs(G)) \to K_*(\cs(\ginf))$ is an isomorphism.
\end{cor}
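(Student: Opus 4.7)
The plan is a short two-step argument: transfer the question about $\iotaG$ to the canonical inclusion $i_{\cs(G)} : \cs(G) \to \Tt_X$, and then quote Pimsner.

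First, Proposition~\ref{prop:tiscsg} supplies an isomorphism $\rho : \cs(\ginf) \to \Tt_X$ satisfying $\rho \circ \iotaG = i_{\cs(G)}$. Since $\rho$ is a $\cs$-isomorphism, its class $[\rho] \in KK(\cs(\ginf), \Tt_X)$ is a $KK$-equivalence. Hence, by functoriality of the Kasparov product,
\[
    [\iotaG] = [\rho]^{-1} \otimes_{\Tt_X} [i_{\cs(G)}] \in KK(\cs(G), \cs(\ginf)),
\]
and this will be a $KK$-equivalence as soon as $[i_{\cs(G)}]$ is.

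Second, I invoke \cite[Theorem~4.4]{Pimsner}, which asserts precisely that the canonical inclusion $i_A : A \to \Tt_X$ associated to any $\cs$-correspondence $X$ over a separable $\cs$-algebra $A$ is a $KK$-equivalence. Here I need to confirm the separability hypothesis: because $G$ is second countable, locally compact, Hausdorff and \'etale, $C_c(G)$ has a countable spanning set (e.g.\ characteristic functions of a countable base of precompact open bisections), so $\cs(G)$ is separable. Thus $[i_{\cs(G)}]$ is a $KK$-equivalence, and combined with the previous paragraph, $[\iotaG]$ is a $KK$-equivalence.

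Finally, the statement about $K$-theory is immediate: any $KK$-equivalence $[f]$ induces an isomorphism $K_*(f)$ via Kasparov product with the $K$-theoretic class, so $K_*(\iotaG) : K_*(\cs(G)) \to K_*(\cs(\ginf))$ is an isomorphism.

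The only point that needs any care at all is checking that the hypotheses of \cite[Theorem~4.4]{Pimsner} are satisfied (separability of $\cs(G)$, and the fact that $X$ is a $\cs$-correspondence in the sense required there); everything else is purely formal manipulation with $KK$-classes and the isomorphism $\rho$ from Proposition~\ref{prop:tiscsg}.
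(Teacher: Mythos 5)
Your proof is correct and follows essentially the same route as the paper: both arguments pass through the isomorphism $\rho$ of Proposition~\ref{prop:tiscsg} intertwining $\iotaG$ with $i_{\cs(G)}$, observe that second countability of $G$ gives separability of $\cs(G)$, and then invoke \cite[Theorem~4.4]{Pimsner}. Your extra explicitness about the Kasparov-product bookkeeping is fine but not needed beyond what the paper records.
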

\begin{proof}
Let $X$ be the $C^*$-correspondence of Lemma~\ref{lem:module}. Then
Proposition~\ref{prop:tiscsg} gives $\cs(\ginf) \cong \Tt_X$. Since $G$ is second
countable, $\cs(G)$ is separable, and so \cite[Theorem~4.4]{Pimsner} implies that the
$KK$-class of the inclusion $i_{\cs(G)} : \cs(G) \to \Tt_X$ is a $KK$-equivalence. In
particular, $\iotaG$ induces an isomorphism in $K$-theory (see \cite[page~38]{Ror}).
\end{proof}

Although we do not need it in the current paper, we digress now to prove that $\Tt_X$,
and so $\cs(\ginf)$, coincides with $\OO_X$. Let $\varphi_X\colon \cs(G) \to \Ll(X)$
denote the homomorphism implementing the left action on $X$.

\begin{prp}
Let $G$ be a second-countable, amenable, locally compact, Hausdorff, \'etale groupoid,
$\alpha$ an automorphism of $G$, and \defginf. Let $X$ be the $C^*$-correspondence of
Lemma~\ref{lem:module}. Then
\begin{enumerate}
 \item\label{it1:cpa} we have $\varphi_X(\cs(G)) \cap \Kk(X)=\{0\}$; and
\item\label{it2:cpa} we have $C^*(\ginf)\cong \OO_X$.
\end{enumerate}
\end{prp}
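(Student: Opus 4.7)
My plan is to prove \eqref{it1:cpa} by an ``infinite orthogonality'' argument that leverages Lemmas~\ref{lem:graded} and~\ref{lem:4.5}, and then to deduce \eqref{it2:cpa} almost for free from \eqref{it1:cpa} and Proposition~\ref{prop:tiscsg}.

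For \eqref{it1:cpa}, suppose for contradiction that there is $a \in \cs(G)\setminus\{0\}$ with $\varphi_X(a) \in \Kk(X)$. Lemma~\ref{lem:4.5} supplies $f \in C_0(\go)$ such that $\|a \cdot (x_i \times f)\| \ge \|a\|/3$ for every $i \in \NN$. A computation as in~\eqref{eq:comp} gives $\langle x_i \times f, x_i\times f\rangle_{\cs(G)} = (f \circ \alpha^{-1})^* * (f \circ \alpha^{-1})$, so the sequence $(x_i \times f)_{i\in\NN}$ is bounded in $X$. Since $\varphi_X(a)$ is by assumption compact, the sequence $\big(a \cdot (x_i\times f)\big)_{i\in\NN}$ must admit a Cauchy subsequence. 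However, by Lemma~\ref{lem:graded}, $a\cdot (x_i \times f) \in X_i$ and $\langle X_i, X_j\rangle_{\cs(G)} = 0$ for $i \ne j$; since $\cs(G)$-valued inner products on orthogonal summands of a Hilbert module are positive and sum to a positive element dominating each summand, this yields
\[
\|a\cdot(x_i\times f) - a\cdot(x_j\times f)\|^2 \ge \max\big\{\|a\cdot(x_i\times f)\|^2,\, \|a\cdot(x_j\times f)\|^2\big\} \ge \|a\|^2/9
\]
for all $i\ne j$. So no subsequence can be Cauchy, contradicting compactness of $\varphi_X(a)$. The main subtlety here is the Pythagorean-type inequality in the Hilbert-module setting, which requires the positivity of the inner products $\langle\xi,\xi\rangle_{\cs(G)}$ in orthogonal summands to ensure $\|p + q\| \ge \max(\|p\|,\|q\|)$ for positive elements; this is standard but deserves a brief justification in the write-up.

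For \eqref{it2:cpa}, first note that $\varphi_X$ is injective: Lemma~\ref{lem:4.5} produces, for any nonzero $a$, an element $f$ with $a\cdot(x_1\times f) \ne 0$, so $\varphi_X(a) \ne 0$. The Cuntz--Pimsner covariance condition of the paper's preliminaries is thus meaningful, and it requires $\psi^{(1)}(\varphi_X(a)) = \pi(a)$ whenever $\varphi_X(a) \in \Kk(X)$. By~\eqref{it1:cpa}, $\varphi_X(a) \in \Kk(X)$ forces $a = 0$, in which case the covariance identity reduces to $0 = 0$. Hence every Toeplitz representation of $X$ is automatically Cuntz--Pimsner covariant, and therefore $\Tt_X = \OO_X$. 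Combined with the isomorphism $\cs(\ginf) \cong \Tt_X$ from Proposition~\ref{prop:tiscsg}, this gives $\cs(\ginf) \cong \OO_X$, as required.
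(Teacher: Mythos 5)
Your part~(\ref{it2:cpa}) is fine and is exactly the paper's argument: by~(\ref{it1:cpa}) the Cuntz--Pimsner covariance condition is vacuous, so $\Tt_X=\OO_X$, and Proposition~\ref{prop:tiscsg} finishes. But your proof of part~(\ref{it1:cpa}) has a genuine gap. You assert that because $\varphi_X(a)\in\Kk(X)$, the image of the bounded sequence $(x_i\times f)_i$ must admit a Cauchy subsequence. That is the Banach-space notion of compactness, and elements of $\Kk(X)$ for a Hilbert module $X$ do not enjoy it in general: $\Kk(X)$ is merely the closed span of the module rank-one operators $\Theta_{x,y}$, and these need not send bounded sets to precompact sets. (For instance, if $A$ is unital and $X=A$ as a module over itself, then $\Theta_{1,1}=\id_X\in\Kk(X)$, which is not a compact Banach-space operator unless $A$ is finite dimensional.) So the uniform separation $\|a\cdot(x_i\times f)-a\cdot(x_j\times f)\|\ge\|a\|/3$ for $i\ne j$ --- which you do establish correctly from the orthogonality of the $X_i$ and the Pythagorean estimate --- does not by itself contradict $\varphi_X(a)\in\Kk(X)$.

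The repair is short and is what the paper does: use the orthogonal decomposition $X=\bigoplus_i X_i$ of Lemma~\ref{lem:graded} to show directly that $\lim_{j\to\infty}\|T(x_j\times f)\|=0$ for every $T\in\Kk(X)$ and $f\in C_0(\go)$. Indeed, writing $y=\sum_n y_n$ with $y_n\in X_n$, one has $\Theta_{x,y}(x_j\times f)=x\cdot\langle y_j,\,x_j\times f\rangle_{\cs(G)}$, whence $\|\Theta_{x,y}(x_j\times f)\|\le\|x\|\,\|y_j\|\,\|x_j\times f\|\to 0$; an arbitrary $T\in\Kk(X)$ is a norm limit of finite sums of such operators, so the same limit holds for $T$. (Equivalently, every $T\in\Kk(X)$ is approximated by operators in $\Kk\big(\bigoplus_{i\le n}X_i\big)$, which kill $x_j\times f$ for $j>n$.) Combined with the lower bound $\|a\cdot(x_j\times f)\|\ge\|a\|/3$ from Lemma~\ref{lem:4.5}, this forces $a=0$, which is the desired conclusion.
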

\begin{proof}
For~(\ref{it1:cpa}), first identify $X=\bigoplus_i X_i$ using Lemma~\ref{lem:graded}.
Since $\bigoplus_i X_i = \overline{\bigcup_n \bigoplus^n_{i=1} X_i}$, it follows that
$\Kk(X) = \bigcup^\infty_{n=1} \Kk(\bigoplus^n_{i=1} X_i)$. For $T \in
\Kk(\bigoplus^n_{i=1} X_i)$, we have $T (x_j \times f) = 0$ for $j > n$ and $f \in
C_0(\go)$. It then follows from continuity that
\begin{equation}\label{eq:cpt to zero}
\lim_{j \to \infty} \|T (x_j \times f)\| = 0 \qquad\text{ for all $T \in \Kk(X)$ and $f \in C_0(\go)$.}
\end{equation}

Fix $T\in \varphi_X(\cs(G)) \cap \Kk(X)$, say $T=\varphi_X(a)$. By Lemma~\ref{lem:4.5}
there exist $f \in C_0(\go)$ such that $\|a \cdot (x_i \times f)\|\geq\|a\|/3$ for each
$i\in\NN$. Using \eqref{eq:cpt to zero}, we see that
\[
0= \lim_{j \to \infty} \|T (x_j \times f)\| =  \lim_{j \to \infty}\|a \cdot (x_j \times f)\|\geq\|a\|/3,
\]
This forces $a = 0$ and hence $0 = \varphi_X(a) = T$, giving~(\ref{it1:cpa}).
For~(\ref{it2:cpa}), recall that $\cs(\ginf) \cong \Tt_X$ by
Proposition~\ref{prop:tiscsg}, and so the result follows from~(\ref{it1:cpa}) and the
definition of $\OO_X$ (see \cite[Corollary~3.14]{Pimsner}).
\end{proof}

\section{Structural properties of \texorpdfstring{$\ginf$}{the twisted product groupoid}}\label{sec:structure}

Recall that a Kirchberg algebra is a nuclear, separable, simple, purely infinite
$C^*$-algebra.  Our goal in this section is to identify conditions on $G$ under which
$\ginf$ is principal and $\cs(\ginf)$ is a Kirchberg algebra in the UCT class with the
same $K$-theory as $\cs(G)$. We have established that $G$ being amenable forces
$\cs(\ginf)$ to be nuclear and in the UCT class in Lemma~\ref{lem:amenable}. If $G$ is
also second countable, then $\cs(\ginf)$ is separable, and has the same $K$-theory as
$\cs(G)$. Thus, once we find conditions on $G$ under which $\ginf$ is principal and
minimal, all that will remain is to consider pure infiniteness.

\begin{prp}\label{prp:princ}
Let $G$ be a locally compact, Hausdorff, \'etale groupoid, $\alpha$ an automorphism of
$G$, and \defginf. The following are equivalent
\begin{enumerate}
\item\label{it1:propprinc} $\ginf$ is principal,
\item\label{it2:propprinc} $G$ is principal and $\alpha$ induces a free action of
    $\ZZ$ on the orbit-space of $G$ in the sense that
\begin{equation}
\label{wfc}
\text{if $x \in \go$ and $l\in \ZZ$ satisfy } [x] = [\alpha^l(x)] \text{ then } l=0.
\end{equation}
\end{enumerate}
\end{prp}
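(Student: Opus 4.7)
The plan is to analyse the isotropy group of $\ginf$ at an arbitrary unit $(u,x) \in \hio \times \go$ and unpack what its triviality translates to on the $H_\infty$ and $G$ sides. By definition, $(h,g)$ lies in the isotropy of $\ginf$ at $(u,x)$ iff $r(h)=s(h)=u$ (so $h$ is in the $H_\infty$-isotropy at $u$), $r(g)=x$, and $\alpha^{c(h)}(s(g))=x$. This last condition forces $r(g)=\alpha^{c(h)}(s(g))$, so $g$ witnesses $[x] = [\alpha^{c(h)}(x)]$ in the orbit space of $G$.

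For the direction (\ref{it2:propprinc})$\Rightarrow$(\ref{it1:propprinc}), I would start from an isotropy element $(h,g)$ at $(u,x)$. The orbit identity above, combined with the freeness hypothesis \eqref{wfc}, immediately gives $c(h)=0$. Then $h$ is an element of the kernel subgroupoid $c^{-1}(0)\subseteq H_\infty$ with $r(h)=s(h)=u$. A short computation shows that $c^{-1}(0)$ is itself principal: any element has the form $(\alpha y, 0, \beta y)$ with $|\alpha|=|\beta|$, and if both $\alpha y$ and $\beta y$ equal the same point $u$ then $\alpha=\beta$, so the triple is a unit. Hence $h=u$, which reduces the condition $\alpha^{c(h)}(s(g))=x$ to $s(g)=x=r(g)$; principality of $G$ then forces $g=x$, as required.

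For (\ref{it1:propprinc})$\Rightarrow$(\ref{it2:propprinc}) I would argue contrapositively, exhibiting non-trivial isotropy of $\ginf$ from any failure of either condition. If $G$ is not principal there is $g\in G$ with $s(g)=r(g)=x$ and $g\neq x$; choosing any unit $u\in\hio$ (say $u=v$, the base vertex) and using that $c(u)=0$, the pair $(u,g)$ is a non-unit isotropy element of $\ginf$ at $(u,x)$. If instead \eqref{wfc} fails, pick $l\neq 0$ and $g\in G$ with $r(g)=x$ and $s(g)=\alpha^l(x)$. The key construction is to produce an $h$ in the $H_\infty$-isotropy at some unit $u$ with $c(h)=-l$: for the constant infinite path $u=e_1^\infty$ the triple $h=(e_1^\infty,-l,e_1^\infty)$ lies in $H_\infty$ since $\sigma$ fixes $e_1^\infty$, and it has $c(h)=-l$. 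Then $(h,g)$ satisfies $r(h,g)=(u,x)=s(h,g)$ and is a non-unit, contradicting principality of $\ginf$.

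The main obstacle is the second half of (\ref{it1:propprinc})$\Rightarrow$(\ref{it2:propprinc}): one must certify that there is a single unit $u\in \hio$ in whose isotropy every integer $l$ appears as a value of the cocycle $c$. This is where I use the specific structure of $H_\infty$ (a single-vertex graph with loops), which supplies the fixed infinite path $e_1^\infty$ under $\sigma$ and gives elements $(e_1^\infty, l, e_1^\infty)\in H_\infty$ for all $l\in\ZZ$. Everything else is a bookkeeping exercise with the formulas for $r,s$ and multiplication in $\ginf$.
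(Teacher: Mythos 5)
Your proposal is correct and follows essentially the same route as the paper: the analysis of isotropy elements $(h,g)$ via $s(h,g)=(s(h),\alpha^{c(h)}(s(g)))$, the use of \eqref{wfc} to force $c(h)=0$ and then principality of $c^{-1}(0)$ and of $G$ to conclude, and the construction of the isotropy element $((e_1^\infty,-l,e_1^\infty),g)$ from a constant infinite path when \eqref{wfc} fails are all exactly the paper's arguments (the paper phrases the converse direction directly rather than contrapositively, and obtains principality of $G$ from the subgroupoid $\{x\}\times G$, but these are cosmetic differences). Your explicit verification that $c^{-1}(0)$ is principal fills in a step the paper states only tersely.
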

\begin{proof}
(\ref{it1:propprinc}) $\implies$(\ref{it2:propprinc}): Suppose that $\ginf$ is principal.
Then for each unit $x$ of $H_\infty$, the subgroupoid $\{x\} \times G$ of $\ginf$ is also
principal. Since $(x,g) \mapsto g$ is an isomorphism of $\{x\} \times G$ onto $G$, it
follows that $G$ is principal. To establish~\eqref{wfc}, fix $x \in \go$ and suppose that
$[x] = [\alpha^l(x)]$ for some $l \in \ZZ$. By assumption, there exists $g_0 \in G$ such
that
\[
    s(g_0) = \alpha^l(x) \quad\text{ and }\quad r(g_0)=x.
\]
Recall that $\hio$ is the space of all paths (finite and infinite) of the graph with one
vertex $v$ and infinitely many edges $e_i$. Let $y_0 =  e_0e_0e_0 \dots \in \hio$, and
let $h_0 := (y_0,-l,y_0) \in H_\infty$. Let $\gamma_0 = (h_0,g_0)$. Then
\[
s(\gamma_0) = (y_0, \alpha^{-l}(s(\alpha^{l}(x)))) = (y_0, x) = r(\gamma_0).
\]
Since $\ginf$ is principal we then have $\gamma_0 \in \ginfo$, so $h_0 \in \hio$ forcing
$l=0$.

(\ref{it2:propprinc})$\implies$(\ref{it1:propprinc}): Suppose that~(\ref{it2:propprinc})
holds. Fix $\gamma=(h,g) \in \ginf$ such that $s(\gamma) = r(\gamma)$. Then $s(h)=r(h)$
and $s(\alpha^{c(h)}(g)) = r(g)$. Hence
\[
    [s(g)] = [r(g)] = [s(\alpha^{c(h)}(g))] = [\alpha^{c(h)}(s(g))].
\]
Using~(\ref{wfc}) we see that $c(h)=0$, and hence $h=(r(h),0,s(h)) \in \hio$. Since $G$
is principal, we also have $g \in \go$, and so $\gamma \in \hio \times \go =
(\ginf)^{(0)}$.
\end{proof}

\begin{rmk}
We had to work a little to ensure that $\ginf$ is principal in
Proposition~\ref{prp:princ}; but it is automatically topologically principal whenever $G$
is. To see this, observe that if $G^u_u = \{u\}$ and $(H_\infty)^x_x = \{x\}$, then
$(h,g) \in (\ginf)^{(x,u)}_{(x,u)}$ forces $h \in (H_\infty)^x_x = \{x\}$, and then \[g
\in G^u_{\alpha^{c(h)}(u)} = G^u_u = \{u\}; \text{ so } (h,g) = (x,u) \in
(\ginf)^{(0)}.\] Since the product of dense subsets of $\hio$ and $\go$ is dense in $\hio
\times \go = (\ginf)^{(0)}$, and since $H_\infty$ is topologically principal, it follows
that if $G$ is topologically principal, then so is $\ginf$.
\end{rmk}

We next describe a necessary and sufficient condition for $\ginf$ to be minimal.

\begin{prp}\label{prp:minimal}
Let $G$ be a locally compact, Hausdorff, \'etale groupoid, $\alpha$ an automorphism of
$G$, and \defginf. The groupoid $\ginf$ is minimal if and only if for each $y\in \go$ the
set $\bigcup_{n\leq 0} \alpha^n([y])$ is dense in $\go$. So if $G$ is minimal, so is
$\ginf$.
\end{prp}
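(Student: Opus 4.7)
The plan is to translate minimality of $\ginf$ into an explicit orbit condition and then match it with the hypothesis. Unpacking the structure maps of $\ginf$, one sees that
\[
[(x_0, u_0)] = \{(r(h), r(g)) : h \in H_\infty,\ s(h) = x_0,\ g \in G,\ s(g) = \alpha^{-c(h)}(u_0)\}
\]
for each $(x_0, u_0) \in \hio \times \go$, so $[(x_0, u_0)]$ meets a basic open set $Z(\mu \setminus F) \times V$ precisely when one can choose $h \in H_\infty$ with $s(h) = x_0$ and $r(h) \in Z(\mu \setminus F)$, together with $g \in G$ satisfying $s(g) = \alpha^{-c(h)}(u_0)$ and $r(g) \in V$.

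For the forward direction I would test density at the point $(v, y)$, where $v \in E^0$ is the unique vertex (viewed as the empty path in $\hio$) and $y \in \go$ is arbitrary. Any $h \in H_\infty$ with $s(h) = v$ satisfies $\sigma^m(r(h)) = \sigma^n(v) = v$, which forces $r(h) \in E^*$ and $c(h) = |r(h)| \geq 0$. Intersecting $[(v, y)]$ with $\hio \times V$ for an arbitrary nonempty open $V \subseteq \go$ then produces an integer $n = -c(h) \leq 0$ with $\alpha^n([y]) \cap V \neq \emptyset$, which is exactly the density condition on $\bigcup_{n \leq 0} \alpha^n([y])$.

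For the converse, fix $(x_0, u_0) \in \ginfo$ and a basic open set $Z(\mu \setminus F) \times V$. The key combinatorial step is to realise elements of $Z(\mu \setminus F)$ tail-equivalent to $x_0$ with prescribed cocycle: picking any $k$ with $e_k \notin F$ and an auxiliary word $\xi \in E^{p-1}$ (for example $\xi = e_0^{p-1}$), the element $y := \mu e_k \xi \sigma^j(x_0)$ lies in $Z(\mu \setminus F) \cap [x_0]_{H_\infty}$, and the resulting morphism $h := (y, |\mu|+p-j, x_0) \in H_\infty$ has cocycle value $c(h) = |\mu| + p - j$. Varying $p \geq 1$ and $j \geq 0$ (subject to $j \leq |x_0|$ when $x_0 \in E^*$) shows that $c(h)$ may be taken to be any integer when $x_0 \in E^\infty$, and any integer at least $|\mu| + 1 - |x_0|$ when $x_0 \in E^*$. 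Applying the hypothesis to $\alpha^{N}(u_0)$ for $N := |x_0| - |\mu| - 1$, and using the identity $\bigcup_{n \leq 0} \alpha^n([\alpha^N(u_0)]) = \bigcup_{n \leq N} \alpha^n([u_0])$ (and the trivial inclusion $\bigcup_{n \leq 0} \subseteq \bigcup_{n \in \ZZ}$ in the infinite case), one then locates the required $g$.

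The final assertion is immediate: if $G$ is minimal then each $[y]$ is already dense in $\go$, so $\bigcup_{n \leq 0} \alpha^n([y]) \supseteq [y]$ is dense. The main obstacle is the asymmetry between $x_0 \in E^*$ and $x_0 \in E^\infty$ in the range of achievable cocycle values; the forward direction hinges on the finite case (specifically $x_0 = v$), and this is what forces the one-sided restriction $n \leq 0$ in the statement rather than $n \in \ZZ$.
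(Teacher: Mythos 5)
Your proof is correct and follows essentially the same route as the paper: both directions test density of the orbit of $(v,y)$ to extract the one-sided condition, and the converse rests on the same observation that applying the hypothesis to $\alpha^{p}(z)$ yields density of $\bigcup_{n\le p}\alpha^{n}([z])$. The only difference is cosmetic: the paper first shrinks $U$ to a cylinder $Z(\lambda)$ via Lemma~\ref{lem:handy} and then builds the connecting element by prepending, taking $h=(x,n,\lambda\lambda' x)$ with $s(h)\in Z(\lambda)$, which realises every cocycle value $n\le -|\lambda|$ uniformly in $x$ and so avoids your case split between $x_0\in E^*$ and $x_0\in E^\infty$.
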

\begin{proof}
First suppose $\ginf$ is minimal and choose $x,y\in \go$. Choose an open neighbourhood
$V$ of $x$ in $\go$. Since $[y] = s(G^y)$, it suffices to find $\gamma\in G^y$ and $n \le
0$ such that $\alpha^n(s(\gamma))\in V$. Recall that $H_\infty$ is constructed from the
graph with a single vertex $v$. Since $\ginf$ is minimal there exists
$(h,\gamma)\in\ginf$ such that $(r(h),r(\gamma)) = r(h,\gamma) = (v,y)$ and $(s(h),
\alpha^{c(h)}(s(\gamma))) = s(h,\gamma) \in \hio \times V$. So $\gamma \in G^y$, and
$\alpha^{c(h)}(s(\gamma)) \in V$. Since $r(h) = v \in E^0$, we have $h = (v, -|\delta|,
\delta)$ for some $\delta  \in E^*$. So $n := c(h) = -|\delta| \le 0$ satisfies
$\alpha^n(s(\gamma)) \in V$ as required.

Now suppose that for all $y\in \go$, the set $\bigcup_{n\leq 0} \alpha^n([y])$ is dense
in $\go$. Observe that by applying this with $y = \alpha^p(z)$ we deduce that
\begin{equation}\label{eq:looks easier}\textstyle
    \bigcup_{n \le p} \alpha^n([z])\text{ is dense in $\go$ for every $p \in \ZZ$ and $z \in \go$.}
\end{equation}
Choose $(u,w), (x,y)\in \hio\times \go$ and let $U\times V$ be a basic open set
containing $(u,w)$. We must find $\xi\in \ginf$ such that $r(\xi)=(x,y)$ and $s(\xi)\in
U\times V$. By Lemma~\ref{lem:handy}, there exists $\lambda \in E^*$ such that
$Z(\lambda) \subseteq U$. Using~\eqref{eq:looks easier}, we can choose $n \le -|\lambda|$
and $\gamma\in G$ such that $r(\gamma) = y$ and $\alpha^{n}(s(\gamma))\in V$. Choose
$\lambda' \in E^*$ such that $|\lambda\lambda'| = -n$. Let $h := \big(x, n,
\lambda\lambda' x\big) \in H_\infty$. Then $r(h) = x$ and $s(h) \in Z(\lambda) \subseteq
U$. So $\xi := (h,\gamma)$ satisfies $r(\xi)=(x,y)$ and $s(\xi) = (s(h),
\alpha^{n}(s(\gamma))) \in U\times V$ as required.

The final statement follows immediately: if $G$ is minimal, then $[y] \subseteq
\bigcup_{n \le 0} \alpha^n([y])$ is dense in $\go$ for every $y$.
\end{proof}

The idea of our construction is that the $C^*$-algebra $\cs(\ginf)$ is something like a
twisted tensor product of $C^*(G)$ with $\OO_\infty$, and so we can expect it frequently
to be purely infinite even when $C^*(G)$ is not. While there is not as yet a completely
satisfactory characterisation of the groupoids $G$ whose $C^*$-algebras are purely
infinite (but see \cite{BCS}), there is a very useful sufficient condition due to
Anantharaman-Delaroche \cite[Proposition~2.4]{AD}. Our next result provides conditions on
$G$ and $\alpha$ that ensure $\ginf$ satisfies this condition. Our proof requires the
additional assumption that $G$ is ample (this is our main source of examples in any
case); but we expect that a similar result should hold more generally.

Recall that an ample groupoid $L$ is \emph{locally contracting} if for every nonempty open
 set $W\subseteq L^{(0)}$, there exists a compact open bisection $B$ such that
\[
    r(B) \subsetneq s(B) \subseteq W.
\]
The groupoid $H_\infty$ is certainly locally contracting: by Lemma~\ref{lem:handy}, each
nonempty open set in $W \subseteq \hio$ contains a set of the form $Z(\lambda)$, and then
$B = Z(\lambda e_1, \lambda)$ satisfies $r(B) = Z(\lambda e_1) \subsetneq Z(\lambda) =
s(B) \subseteq W$.

\begin{lem}\label{lem:lc}
Let $G$ be a locally compact, Hausdorff, \'etale groupoid, $\alpha$ an automorphism of
$G$, and \defginf.  Suppose that $G$ is ample and $\Bb$ is a basis for the topology on
$G^{(0)}$ consisting of compact open bisections such that
\begin{equation}
\label{cond:lc}\text{for every $V\in\Bb$ there exists $l \ge 1$ such that $\alpha^{-l}(V)\subseteq V$}.
\end{equation}
Then $\ginf$ is locally contracting.
\end{lem}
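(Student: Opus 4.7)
The plan is to exhibit, for each nonempty open set $W \subseteq (\ginf)^{(0)} = \hio \times \go$, a compact open bisection $B$ of the ``product'' form $U_1 \times U_2$ with $U_1 \subseteq H_\infty$ having constant cocycle value. By Lemma~\ref{lem:twisted product} the formulas for $r$ and $s$ then split cleanly across the two factors, reducing the verification $r(B) \subsetneq s(B) \subseteq W$ to a check in $H_\infty$ and a check in $\go$. Intuitively, $H_\infty$ supplies the strict contraction (as already noted in the paragraph preceding the lemma), while the hypothesis~\eqref{cond:lc} supplies the compatibility needed on the $G$-factor.

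The first step is to shrink $W$ to a basic open set. Starting from a point $(u,y) \in W$ and a product neighborhood of it, Lemma~\ref{lem:handy} applied to the $\hio$-coordinate together with the assumption that $\Bb$ is a basis of $\go$ produces $\lambda \in E^*$ and $V \in \Bb$ with $Z(\lambda) \times V \subseteq W$. Applying~\eqref{cond:lc} to $V$ yields $l \geq 1$ with $\alpha^{-l}(V) \subseteq V$. I would then pick any $\mu' \in E^l$ and set
\[
    B := Z(\lambda\mu', \lambda) \times \alpha^{-l}(V) \subseteq \ginf,
\]
which is a compact open bisection of $\ginf$ by Lemma~\ref{lem:etale}. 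Since $c \equiv l$ on $Z(\lambda\mu', \lambda)$ and $\alpha^{-l}(V) \subseteq \go$, the formulas of Lemma~\ref{lem:twisted product} immediately give
\[
    s(B) = Z(\lambda) \times \alpha^l\bigl(\alpha^{-l}(V)\bigr) = Z(\lambda) \times V \subseteq W
\]
and $r(B) = Z(\lambda\mu') \times \alpha^{-l}(V)$. Since $\mu'$ is nonempty and $E$ has more than one edge, $Z(\lambda\mu') \subsetneq Z(\lambda)$, so $r(B) \subsetneq s(B)$, as required.

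The only real design choice is the sign of the cocycle. The hypothesis $\alpha^{-l}(V) \subseteq V$ controls $\alpha$ in only one direction, so $U_1$ must have cocycle value $+l$ (which is why I take $|\mu'|=l$ and place $\mu'$ in the \emph{first} coordinate of $Z(\lambda\mu', \lambda)$ rather than the second), and correspondingly $U_2$ must be chosen as $\alpha^{-l}(V)$ rather than $V$ itself, so that $\alpha^l(U_2) = V$ lands back inside $V$. With those sign conventions fixed, the remainder is a routine direct computation, and I do not anticipate any genuine obstacle.
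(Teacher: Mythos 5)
Your proposal is correct and takes essentially the same approach as the paper: a product bisection $U_1\times U_2$ with $U_1\subseteq H_\infty$ of constant cocycle value $m\ge 1$ supplying the strict contraction and $U_2=\alpha^{-m}(V)$ chosen via~\eqref{cond:lc} so that the source lands in $Z(\lambda)\times V\subseteq W$. The only (cosmetic) difference is that the paper uses $U_1=Z(\lambda^{2l},\lambda^l)$ with cocycle value $|\lambda|\,l$, whereas your $Z(\lambda\mu',\lambda)$ with $|\mu'|=l$ is marginally more direct.
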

\begin{proof}
Let $W$ be a nonempty open set in $(\ginf)^{(0)}=\hio\times \go$. Then there exist nonempty clopen subsets
$V_H\subseteq \hio$ and $V_G\subseteq \go$ such that $V_G\in \Bb$ and $V_H\times
V_G\subseteq W$. By Lemma~\ref{lem:handy}, there exists $\lambda \in E^*$ such that
$Z(\lambda)\subseteq V_H$. Since each $Z(\lambda e_i) \subseteq Z(\lambda)$, we can
assume that $|\lambda| \ge 1$. Define $N:=|\lambda|$.

Fix $l \ge 1$ such that $\alpha^{-l}(V_G)\subseteq V_G$.  Then
$\alpha^{-Nl}(V_G)\subseteq V_G$. Let $\lambda^{l}$ denote the finite path
$\lambda\lambda \cdots\lambda$ in which $\lambda$ is repeated $l$ times. Let
$U:=Z(\lambda^{2l}, \lambda^l)$. Then
\[
    r(U)=Z(\lambda^{2l})\text{ and }s(U) = Z(\lambda^l),
\]
and we also have $U \subseteq c^{-1}(Nl)$.

Let $B := U\times \alpha^{-Nl}(V_G) \subseteq \ginf$. Then $B$ is a compact open
bisection, and
\[
r(B)
    =r(U)\times \alpha^{-Nl}(V_G)
    \subsetneq Z(\lambda^l)\times V_G
    =Z(\lambda^l)\times \alpha^{Nl}(\alpha^{-Nl}(V_G))
    =s(B)
    \subseteq W.
\]
Hence $\ginf$ is locally contracting.
\end{proof}

Putting all this together, we obtain our main result.


\begin{thm}\label{thm:main}
Let $G$ be a second-countable, amenable, locally compact, Hausdorff, \'etale groupoid,
$\alpha$ an automorphism of $G$ satisfying conditions (\ref{wfc})~and~(\ref{cond:lc}), and \defginf.
Then $\ginf$ is also a second-countable, amenable, locally compact, Hausdorff, \'etale groupoid. Furthermore
\begin{enumerate}
\item\label{it:same Kth} the inclusion $\iotaG : \cs(G) \to \cs(\ginf)$ of
    Lemma~\ref{lem:embedding} induces a $KK$-equivalence;
\item\label{it:principal} if $G$ principal, then  so is $\ginf$;
\item\label{it:ample} if $G$ ample, then $\ginf$ is ample and locally contracting;
    and
\item\label{it:minimal} if $G$ minimal, so is $\ginf$.
\end{enumerate}
In particular, if $G$ is principal, ample and minimal, then $C^*(\ginf)$ is
a Kirchberg algebra in the UCT class with the same $K$-theory as $\cs(G)$.
\end{thm}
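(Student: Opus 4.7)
The plan is to read off most of the theorem directly from results already established in Sections~\ref{sec:ginf}--\ref{sec:structure}, and then combine them with two standard criteria to get the Kirchberg conclusion. First I would dispense with the preamble and items~(\ref{it:same Kth})--(\ref{it:minimal}): Lemma~\ref{lem:etale} shows that $\ginf$ inherits second countability and the locally-compact-Hausdorff-\'etale structure from $G$ and $H_\infty$; Lemma~\ref{lem:amenable} delivers amenability together with $\cs(\ginf)\cong\cs_r(\ginf)$ nuclear and in the UCT class. Item~(\ref{it:same Kth}) is exactly Corollary~\ref{cor:samek}; item~(\ref{it:principal}) is the implication (\ref{it2:propprinc})$\Rightarrow$(\ref{it1:propprinc}) of Proposition~\ref{prp:princ}, whose second hypothesis is precisely~(\ref{wfc}); item~(\ref{it:ample}) combines the ample assertion of Lemma~\ref{lem:etale} with Lemma~\ref{lem:lc}, whose hypothesis is~(\ref{cond:lc}); and item~(\ref{it:minimal}) is the last sentence of Proposition~\ref{prp:minimal}.

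For the ``in particular'' clause, assuming $G$ is principal, ample and minimal, I would verify the four Kirchberg axioms in turn. Nuclearity, UCT membership and the $K$-theoretic identification with $\cs(G)$ are all in hand from the items just noted; separability of $\cs(\ginf)$ follows from second countability of $\ginf$, since then $C_c(\ginf)$ is separable in the inductive-limit topology and dense in $\cs(\ginf)$. For simplicity I would invoke the standard criterion that the $\cs$-algebra of a Hausdorff, \'etale, topologically principal, minimal groupoid whose full and reduced algebras coincide is simple: item~(\ref{it:principal}) together with the remark following Proposition~\ref{prp:princ} gives topological principality, item~(\ref{it:minimal}) gives minimality, and Lemma~\ref{lem:amenable} identifies the full and reduced algebras.

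For pure infiniteness I would apply \cite[Proposition~2.4]{AD}, which guarantees that every nonzero hereditary subalgebra of $\cs_r(L)$ contains an infinite projection whenever $L$ is a locally contracting, topologically principal, Hausdorff, \'etale groupoid. Item~(\ref{it:ample}) gives locally contracting and item~(\ref{it:principal}) gives topologically principal, so this applies to $\ginf$, and combined with simplicity it yields pure infiniteness.

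The ``main obstacle'' here is really organisational rather than mathematical: everything substantive has been done in the preceding sections, and the only point of care is confirming that the two standing hypotheses~(\ref{wfc}) and~(\ref{cond:lc}) propagate through the cited principal--minimal simplicity criterion and the Anantharaman-Delaroche local-contraction criterion without gap. No fresh calculations are required.
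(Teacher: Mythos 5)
Your proposal is correct and follows essentially the same route as the paper: the preamble and items (1)--(4) are read off from Lemma~\ref{lem:etale}, Lemma~\ref{lem:amenable}, Corollary~\ref{cor:samek}, Proposition~\ref{prp:princ}, Lemma~\ref{lem:lc} and Proposition~\ref{prp:minimal}, and the Kirchberg conclusion is assembled from the same simplicity criterion for minimal, (topologically) principal, Hausdorff \'etale groupoids together with \cite[Proposition~2.4]{AD} for pure infiniteness. The only cosmetic difference is that since $\ginf$ is actually principal by item~(\ref{it:principal}), topological principality is immediate and the remark following Proposition~\ref{prp:princ} is not needed.
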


\begin{proof}
The first statement and~(\ref{it:ample}) follow from Remark~\ref{rem.2.1} and
Lemma~\ref{lem:etale}, Lemma~\ref{lem:amenable} and Lemma~\ref{lem:lc}.
Item~(\ref{it:same Kth}) follows from Corollary~\ref{cor:samek},
item~(\ref{it:principal}) follows from Lemma~\ref{prp:princ}, and item~(\ref{it:minimal})
follows from the final statement of Proposition~\ref{prp:minimal}.

For the final statement observe that $C^*(\ginf)$ is nuclear and in the UCT class by
Lemma~\ref{lem:amenable}. It is separable because $\ginf$ is second countable by Lemma
\ref{lem:etale}. Items (\ref{it:principal})~and~(\ref{it:minimal}) combine with
\cite[Corollary~4.6]{Renault:JOT91} (see also \cite[Theorem~5.1]{BCFS}) to show that
$\cs(\ginf)$ is simple, and Lemma~\ref{lem:lc} combines with \cite[Proposition~2.4]{AD}
to show that it is purely infinite. Item~(\ref{it:same Kth}) (or
Corollary~\ref{cor:samek}) says that $C^*(\ginf)$ has the same $K$-theory as $\cs(G)$,
completing the proof.
\end{proof}

It will be useful to be able to stabilise the $C^*$-algebras of the groupoids $G$
appearing in Theorem~\ref{thm:main}. The following result allows us to do so.
We define $K$ to be the complete equivalence relation on $\ZZ$ regarded as a discrete
groupoid. So $\cs(K)$ is canonically isomorphic to $\Kk(\ell^2(\ZZ))$ (see for example Theorem~3.1 of \cite{MRW87}).

\begin{lem}\label{lem:stabilise}
Suppose that $G$ is a second-countable, amenable, locally compact, Hausdorff, \'etale
groupoid, and let $\alpha$ be an automorphism of $G$ that satisfies conditions
(\ref{wfc})~and~(\ref{cond:lc}). Then the cartesian product $G \times K$ and the
automorphism $\alpha \times \id : G \times K \to G \times K$ given by
\[
(\alpha \times \id)(g, (m,n)) = (\alpha(g), (m,n))
\]
also have all these properties. Moreover,
\[
    \cs(G \times K) \cong C^*(G) \otimes \Kk(\ell^2(\ZZ)),
\]
and $G \times K$ is principal if $G$ is principal, minimal if $G$ is minimal, and ample
if $G$ is ample.
\end{lem}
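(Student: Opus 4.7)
The plan is to verify each listed property for $G \times K$ and $\alpha \times \id$ by reducing it to the corresponding property of $G$ (respectively $\alpha$), exploiting that $K$ is a countable, discrete, principal, amenable groupoid with $\cs(K) \cong \Kk(\ell^2(\ZZ))$ and that $\alpha \times \id$ acts trivially on the $K$-coordinate.

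First I would handle the structural inheritance. Second countability, local compactness, Hausdorffness, and étaleness all pass to Cartesian products under the product topology, and amenability passes to products (as already used in the proof of Lemma~\ref{lem:amenable}); hence $G \times K$ is a second-countable, amenable, locally compact, Hausdorff, étale groupoid. Principality passes through products since the map $(g,(m,n)) \mapsto ((r(g),m),(s(g),n))$ is injective whenever $g \mapsto (r(g),s(g))$ is, and products of bases of compact open bisections form a base of compact open bisections, so $G \times K$ is ample whenever $G$ is.

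Next I would check the two dynamical hypotheses on $\alpha \times \id$. Because $K$ is the complete equivalence relation on $\ZZ$, the $(G \times K)$-orbit of $(x,n)$ is $[x]_G \times \ZZ$, which is independent of $n$. So $[(x,n)] = [(\alpha \times \id)^l(x,n)]$ holds precisely when $[x]_G = [\alpha^l(x)]_G$, and condition~(\ref{wfc}) for $\alpha \times \id$ follows from condition~(\ref{wfc}) for $\alpha$. For condition~(\ref{cond:lc}), if $\Bb$ is the given base for $\go$, then $\{U \times \{n\} : U \in \Bb,\ n \in \ZZ\}$ is a base of compact open subsets of $\go \times \ZZ$, and choosing $l \ge 1$ with $\alpha^{-l}(U) \subseteq U$ gives $(\alpha \times \id)^{-l}(U \times \{n\}) = \alpha^{-l}(U) \times \{n\} \subseteq U \times \{n\}$.

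For the $C^*$-algebra identification I would appeal to the standard tensor-product decomposition $\cs(G_1 \times G_2) \cong \cs(G_1) \otimes \cs(G_2)$ for étale groupoids: algebraically one identifies $C_c(G \times K)$ with $C_c(G) \odot C_c(K)$ under convolution, and extends by continuity, using amenability of both factors to identify full with reduced norms and nuclearity of $\Kk(\ell^2(\ZZ))$ to render the tensor norm unambiguous. Combining this with $\cs(K) \cong \Kk(\ell^2(\ZZ))$ from \cite{MRW87} gives the stated isomorphism. Minimality of $G \times K$ is then immediate: if $\overline{[x]_G} = \go$ for every $x$, then $\overline{[(x,n)]} = \overline{[x]_G \times \ZZ} = \overline{[x]_G} \times \ZZ = \go \times \ZZ$ because $\ZZ$ is discrete. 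The only mildly subtle point, and the main obstacle, is pinning down the tensor-product decomposition $\cs(G \times K) \cong \cs(G) \otimes \cs(K)$; everything else reduces to direct coordinatewise verifications.
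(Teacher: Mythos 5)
Your proposal is correct and follows essentially the same route as the paper: reduce each property to the corresponding property of $G$ using that $K$ is principal, minimal, ample and amenable, and identify $\cs(G\times K)$ with $\cs(G)\otimes\cs(K)\cong \cs(G)\otimes\Kk(\ell^2(\ZZ))$ via the standard tensor decomposition for products of \'etale groupoids. The only cosmetic difference is that the paper obtains amenability of $G\times K$ by first noting $\cs_r(G\times K)\cong \cs_r(G)\otimes\cs_r(K)$ is nuclear and invoking \cite[Corollary~6.2.14(ii)]{A-DR}, whereas you directly use that products of amenable groupoids are amenable (a fact the paper itself uses in Lemma~\ref{lem:amenable}); your explicit verifications of conditions \eqref{wfc} and \eqref{cond:lc} for $\alpha\times\id$, which the paper leaves implicit, are also correct.
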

\begin{proof}
The cartesian product of second-countable locally compact Hausdorff spaces is itself
second-countable, locally compact and Hausdorff. Since $r : G \to \go$ and $r' : K \to
K^{(0)}$ are local homeomorphisms the product map $r \times r' : G \times K \to (G \times
K)^{(0)}$ is too, and so $G \times K$ is \'etale. Since $G$ is amenable,
$C^*(G)=C_r^*(G)$ is nuclear. By definition of $K$, the associated $C^*$-algebra
$C^*(K)=C_r^*(K)$ is nuclear and simple. Using \cite[Proposition~10.1.5]{BO}, we see that
$C_r^*(G \times K)\cong C_r^*(G)\otimes C_r^*(K)$ is nuclear, and then $G \times K$ is
amenable by \cite[Corollary~6.2.14(ii)]{A-DR}. Since $\cs(K)$ is canonically isomorphic
to $\Kk(\ell^2(\ZZ))$, the $\cs$-algebra $C^*(G \times K)$ is canonically isomorphic to
$C^*(G) \otimes C^*(K) = C^*(G)\otimes \Kk(\ell^2(\ZZ))$. For the final statement, note
that since $K$ is principal, minimal and ample, taking cartesian products with $K$
preserves these properties.
\end{proof}

\section{Examples from Bratteli diagrams}\label{sec:1-graph examples}
The hypotheses of Theorem~\ref{thm:main} may appear restrictive, but we show in this and
the subsequent section that there are many examples. By the Kirchberg--Phillips
classification theorem, for each simple dimension group $D \not= \ZZ$ and each $d \in
D^+$ there is a unique Kirchberg algebra $A_D$ with  $(K_0(A_D), [1_{A_D}], K_1({A_D}))
\cong (D, d, \{0\})$. In this section we show that $A_D$ can be realised as the
$C^*$-algebra of an amenable, principal, ample groupoid with compact unit space.

\begin{thm}\label{thm:AFeg}
Let $D$ be a simple dimension group other than $\ZZ$. Let $A$ be a UCT Kirchberg algebra
with trivial $K_1$-group and suppose that there is an isomorphism $\pi : K_0(A) \to D$.
If $A$ is unital, suppose further that $\pi([1_A]) \in D^+$. Then there is an amenable,
principal, ample groupoid $\Gg$ such that $A \cong \cs(\Gg)$. If $A$ is unital, then
$\Gg^{(0)}$ is compact.
\end{thm}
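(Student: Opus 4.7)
The plan is to apply Theorem~\ref{thm:main}, combined with Lemma~\ref{lem:stabilise} in the non-unital case and with the Kirchberg--Phillips classification theorem. The task therefore reduces to exhibiting a second-countable, amenable, principal, ample, minimal groupoid $G$ together with an automorphism $\alpha$ of $G$ satisfying conditions~(\ref{wfc}) and~(\ref{cond:lc}), such that $\cs(G)$ is a simple AF algebra with $K_0(\cs(G)) \cong D$, and in the unital case with $G^{(0)}$ compact and the class of the unit matching $\pi([1_A])$.

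First I would realise $D$, with order unit $d := \pi([1_A])$ when $A$ is unital, as the pointed $K_0$-group of a simple unital AF algebra by the Effros--Handelman--Shen theorem, and then present that AF algebra as $\cs(G_E)$ for a suitable Bratteli diagram $E$ regarded as a directed graph. Remark~\ref{rem.2.1} guarantees that $G := G_E$ is automatically amenable, second-countable, locally compact, Hausdorff, and ample; it is principal because a Bratteli diagram contains no cycles, and minimal because $\cs(G_E)$ is simple. Arranging $E$ to have a single vertex at level $0$ makes $G^{(0)}$ compact and ensures that $[1_{\cs(G)}]$ is the class corresponding to $d$.

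The central difficulty is to choose $E$ simultaneously so that $G_E$ carries an automorphism $\alpha$ satisfying both~(\ref{wfc}) and~(\ref{cond:lc}). My plan is to engineer $E$ to have a shift-like symmetry, so that a canonical bijection of the edge set descends to a homeomorphism of $E^\infty$ and thence to a groupoid automorphism. Condition~(\ref{wfc}) then reduces to checking that no infinite path is tail-equivalent to any nontrivial translate of itself, while~(\ref{cond:lc}) reduces to verifying that each cylinder $Z(\mu)$ is eventually carried strictly inside itself by a sufficiently large power of $\alpha^{-1}$; both should follow from the shift moving level indices in a controlled way. The main obstacle is arranging the combinatorics of $E$ so that the pointed $K_0$-data $(D,d)$ is preserved while such symmetry is available; the hypothesis $D \neq \ZZ$ is exactly what provides enough room in the diagram for a nontrivial shift to coexist with the $K$-theoretic constraint.

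Given such $(G,\alpha)$, Theorem~\ref{thm:main} produces an amenable, principal, ample groupoid $\ginf$ with $\cs(\ginf)$ a UCT Kirchberg algebra of $K$-theory $(D,\{0\})$, and $\ginfo = \hio \times G^{(0)}$ is compact when $G^{(0)}$ is. The inclusion $\iotaG$ of Lemma~\ref{lem:embedding} is unital in the unital case and sends $[1_{\cs(G)}]$ to $[1_{\cs(\ginf)}]$, so Corollary~\ref{cor:samek} places the class of the unit at $d$, and Kirchberg--Phillips yields $A \cong \cs(\ginf)$ with $\Gg := \ginf$. If instead $A$ is non-unital then it is stable, and I would apply Lemma~\ref{lem:stabilise} to replace $(G,\alpha)$ by $(G \times K,\alpha \times \id)$ before invoking Theorem~\ref{thm:main}; the resulting groupoid has non-compact unit space $\hio \times G^{(0)} \times \ZZ$, so its $\cs$-algebra is a stable Kirchberg algebra with the correct $K$-theory, and Kirchberg--Phillips again delivers the required isomorphism.
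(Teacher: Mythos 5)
Your overall strategy---produce an AF groupoid model $(G,\alpha)$ for $(D,d)$, feed it into Theorem~\ref{thm:main}, and finish with Kirchberg--Phillips---is the paper's strategy, but the proposal leaves the genuinely hard step unproved, and the mechanism you sketch for it would not work. The entire content of the theorem lies in exhibiting an automorphism $\alpha$ of $G_E$ satisfying \eqref{wfc} and \eqref{cond:lc}. A ``shift-like symmetry \dots moving level indices'' cannot exist: in a Bratteli diagram the level $V_0$ is intrinsically characterised (as the set of vertices emitting no edge), so every graph automorphism preserves each $V_n$; and even formally, a level-shifting map would carry a cylinder $Z(\mu)$ to a cylinder based at a different level, disjoint from $Z(\mu)$, killing \eqref{cond:lc}. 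What the paper actually does is telescope the diagram so that each pair $v\in V_n$, $w\in V_{n+1}$ is joined by $k_{vw}>n$ parallel edges, and take $\alpha$ to be the cyclic permutation of each parallel class, fixing all vertices. Then every finite path has finite $\alpha$-orbit, so $\alpha^{-l}(Z(\mu))=Z(\mu)$ for suitable $l$, which gives \eqref{cond:lc} (the condition asks only for containment, not the strict containment you describe); and \eqref{wfc} holds because $[x]=[\alpha^l(x)]$ forces $l\equiv 0 \pmod{k_{v_pw_p}}$ for every $p$ while $k_{v_pw_p}\to\infty$. The hypothesis $D\neq\ZZ$ is what permits this telescoping. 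Without some such concrete construction your argument has a hole at its centre.

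There is also an error in your unital case. For the paper's graph groupoid $G_E$ the unit space is all of $P_E=E^\infty$, which contains paths beginning at every level and hence is never compact for a Bratteli diagram, no matter how $V_0$ is arranged; accordingly $\cs(G_E)$ is only Morita equivalent to the AF algebra $B$, is nonunital, and you cannot simply ``arrange $[1_{\cs(G)}]=d$''. The paper instead stabilises ($\widetilde G=G\times K$ via Lemma~\ref{lem:stabilise}), applies Theorem~\ref{thm:main}, realises $\pi([1_A])$ as $[1_W]$ for a compact open $W\subseteq(\tginf)^{(0)}$, and takes $\Gg:=\tginf|_W$, using that simplicity, pure infiniteness, nuclearity and the UCT pass to full corners. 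Your route could be repaired by cutting $G_E$ down to the compact open set $Z(v_0)$ before applying Theorem~\ref{thm:main}, but some corner argument of this kind is needed and is missing. Your treatment of the nonunital case (stabilise, observe the unit space $\hio\times\go\times\ZZ$ is noncompact, apply Kirchberg--Phillips) does match the paper.
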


The remainder of this section will be spent proving Theorem~\ref{thm:AFeg}. Our
convention is that a \emph{Bratteli diagram} is a row-finite directed graph whose vertex
set $E^0$ is partitioned into finite sets $V_n$, $n = 0, 1, 2, \dots$ and whose edge set
is partitioned as $E^1 = \bigcup^\infty_{n=0} V_n E^1 V_{n+1}$. We will further assume
that $vE^1 \not= \emptyset$ for all $v \in E^0$ and that $E^1v \not= \emptyset$ for all
$v \in E^0 \setminus V_0$.

\begin{lem}\label{lem.6.2}
Let $D$ be a simple dimension group other than $\ZZ$. Then there exists a Bratteli
diagram $E$ such that $K_0(C^*(E))\cong D$, $K_1(C^*(E))\cong \{0\}$ and $k_{vw}:=|vEw| >
n$ for each $n\in \NN$, $v\in V_n$ and $w\in V_{n+1}$. Let $G$ denote the graph
groupoid~\eqref{eqn.2.2} of $E$. Then $G$ is a second-countable, amenable, locally
compact, Hausdorff, ample groupoid that is minimal and principal.
\end{lem}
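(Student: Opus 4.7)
The plan is to first construct a Bratteli diagram $E$ meeting the $K$-theoretic and multiplicity requirements, then verify the listed properties of the graph groupoid $G=G_E$.

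For the construction of $E$, I would first invoke the Effros--Handelman--Shen theorem to realise $D$ as the $K_0$-group of a simple AF algebra, and pick a Bratteli diagram $E_0$ representing it as an inductive limit. Simplicity of $D$ forces the combinatorial condition that for every vertex $v$ at level $n$ in $E_0$ there exists $N>n$ such that $v E_0^* w$ is nonempty for every $w$ at level $N$. Starting from this, I would telescope $E_0$: choose a strictly increasing sequence $0=n_0<n_1<n_2<\dots$ of levels with the gaps $n_{k+1}-n_k$ large enough that for every pair $(v,w)$ with $v$ at level $n_k$ and $w$ at level $n_{k+1}$, the number of length-$(n_{k+1}-n_k)$ paths from $v$ to $w$ in $E_0$ exceeds $k$. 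Because $D\ne\ZZ$, simplicity ensures these counts can be driven above any threshold by enlarging the gap, so such $n_k$ exist. Letting $E$ be the telescoped diagram, we have $K_0(C^*(E))\cong K_0(C^*(E_0))\cong D$ and $K_1(C^*(E))=0$ (as $C^*(E)$ is AF, being the graph algebra of a graph with no cycles), and the multiplicity condition $k_{vw}>n$ for $v\in V_n$, $w\in V_{n+1}$ holds by construction.

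For the groupoid properties, Remark~\ref{rem.2.1} already yields that $G$ is a second-countable, amenable, locally compact, Hausdorff, ample groupoid; second-countability is immediate since $E^0$ and $E^1$ are countable. Row-finiteness together with the absence of sinks force $P=E^\infty$, so $\go$ is the infinite-path space. For principality, note that because edges in a Bratteli diagram lie in specific slices $V_m E^1 V_{m+1}$, for any $x\in E^\infty$ the vertex $r(\sigma^k(x))$ sits at a level that depends linearly on $k$; hence $\sigma^m(x)=\sigma^n(x)$ forces $m=n$, so each isotropy group $G_x^x$ is trivial. For minimality, fix $x\in E^\infty$ and a basic open $Z(\mu)\subseteq E^\infty$, and pick $n$ large enough that the level of $r(\sigma^n(x))$ strictly exceeds the level of $s(\mu)$. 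The fact that every pair of consecutive-level vertices is joined by at least one edge (an immediate consequence of the multiplicity condition) produces a path $\lambda$ from $s(\mu)$ to $r(\sigma^n(x))$ of the appropriate length, and then $\mu\lambda\sigma^n(x)\in Z(\mu)\cap [x]$, establishing that $[x]$ is dense.

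The main technical point is the telescoping argument: one must verify carefully that, when $D$ is simple and $D\ne\ZZ$, sufficiently wide telescoping amplifies the incidence counts enough to clear the threshold $n$ at each level. Everything else reduces to standard facts about graph groupoids and Bratteli diagrams.
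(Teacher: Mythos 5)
Your construction of $E$ is the same as the paper's: realise $D$ as $K_0$ of a simple AF algebra (Effros--Handelman--Shen), pass to a Bratteli diagram, and telescope until $k_{vw}>n$; like the paper, you defer the key amplification step (that simplicity plus $D\neq\ZZ$ lets telescoping drive all multiplicities above any threshold) to a known fact --- the paper cites Lemmas A4.3 and A4.4 of \cite{Effros} for exactly this, and your sketch of why $D\neq\ZZ$ is needed is the right one. Where you genuinely diverge is in verifying minimality and principality. The paper gets minimality indirectly: the AF algebra $B$ is simple, $B$ is Morita equivalent to $C^*(E)\cong C^*(G)$ by \cite{Tyler, Drinen}, so $C^*(G)$ is simple, and then \cite[Theorem~5.1]{BCFS} forces $G$ to be minimal; principality is quoted from \cite[Proposition~8.1]{aHH} since $E$ has no cycles. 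You instead argue both directly on the path space: the level stratification of a Bratteli diagram kills isotropy (if $\sigma^m(x)=\sigma^n(x)$ then comparing the levels of the ranges gives $m=n$), and the multiplicity condition $k_{vw}>n\ge 0$ guarantees full connectivity between consecutive levels, so any shift-tail class meets any cylinder $Z(\mu)$ by routing a finite path from $s(\mu)$ down to $r(\sigma^n(x))$. Both routes are correct; yours is more self-contained and exploits the telescoped structure you just built (note that your minimality proof genuinely uses the telescoping, whereas the paper's works for any Bratteli diagram of a simple AF algebra), while the paper's is shorter by citation and makes the logical dependence on simplicity of $D$ transparent. One small point worth making explicit in your write-up: the identification $K_0(C^*(E_0))\cong D$ for the \emph{graph} algebra of the Bratteli diagram is not a tautology --- it rests on the Morita equivalence of $C^*(E_0)$ with the AF algebra $B$ (Drinen/Tyler) or on the standard $K$-theory computation for graph algebras.
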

\begin{proof}
Let $B$ be an AF algebra whose $K_0$-group is isomorphic to $D$. Then $K_1(B) = \{0\}$
because $B$ is AF. By \cite[Corollary IV.5.2]{Dav}, $B$ is simple. Let $E$ be a Bratteli
diagram for $B$. By \cite[Theorem~1]{Tyler} (see also \cite[Theorem~1]{Drinen}), $B$ is
Morita equivalent to $\cs(E)$. By telescoping $E$, we can assume that for every $n \in
\NN$, every $v\in V_n$ and every $w\in V_{n+1}$, we have $k_{vw}=|vEw| > n$: this
follows, for example, from Lemmas A4.3~and~A4.4 of \cite{Effros}---see the proof of
Theorem~6.2(2) on page~163 of \cite{PRRS}.

Since $G$ is the path groupoid \eqref{eqn.2.2} associated to $E$, it is a
second-countable, amenable, locally compact, Hausdorff, ample groupoid, and we have
$\cs(E) \cong \cs(G)$ (see Remark~\ref{rem.2.1}). Since simplicity is preserved by Morita
equivalence and $B$ is simple, $\cs(G)$ is simple and hence $G$ is minimal (see, for
example, \cite[Theorem~5.1]{BCFS}). Since $E$ has no cycles, $G$ is also principal by
\cite[Proposition~8.1]{aHH}.
\end{proof}

With $E$, $k_{vw}$ and $G$ as in Lemma \ref{lem.6.2}, we label the edges in $vEw$ as
$\{(vw)_{1}, \dots, (vw)_{k_{vw}}\}$. To define an automorphism $\alpha$ of $G$, first
consider the graph automorphism, also called $\alpha$, of $E$ such that
\[
\alpha((vw)_{i}) = (vw)_{((i+1)\ \mathrm{mod}\ k_{vw})}\quad\text{ for all $v\in V_n$, $w \in
    V_{n+1}$ and $i < k_{vw}$.}
\]
Then $\alpha$ restricts to a permutation of the set $vE^1w$ for $v\in V_n$ and $w\in
V_{n+1}$, and $\alpha$ pointwise fixes $E^0$.

\begin{lem}\label{lem.6.3}
With $G$ as in Lemma \ref{lem.6.2}, and $\alpha$ as above, there is an automorphism
$\alpha$ of $G$ satisfying \eqref{wfc}  and \eqref{cond:lc}  such that $\alpha(x,m,y) = (\alpha(x),m,\alpha(y))$ for $(x,m,y)\in G$.
\end{lem}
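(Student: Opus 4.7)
The plan is to lift the graph automorphism $\alpha$ of $E$ (which fixes every vertex and cyclically permutes each set $vE^1w$ for $v \in V_n$, $w \in V_{n+1}$) first to the path space $P$ and then to the groupoid $G$. First I would extend $\alpha$ coordinatewise to finite and infinite paths by $\alpha(x_1 x_2 \cdots) := \alpha(x_1)\alpha(x_2)\cdots$; because $\alpha$ fixes vertices, this preserves composability and so is well defined. Since our Bratteli diagram is row-finite we have $P = E^\infty$, so $\alpha$ is a self-map of $P$. It carries each basic clopen cylinder $Z(\mu)$ bijectively to $Z(\alpha(\mu))$, and its inverse is the coordinatewise extension of the inverse graph automorphism, so $\alpha$ is a homeomorphism of $P$; moreover $\alpha\circ\sigma^n = \sigma^n\circ\alpha$. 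I would then define $\alpha : G\to G$ by the required formula $\alpha(x,m,y) := (\alpha(x),m,\alpha(y))$; well-definedness follows from the shift-equivariance $\sigma^m(\alpha(x))= \alpha(\sigma^m(x))=\alpha(\sigma^n(y))=\sigma^n(\alpha(y))$, and preservation of composition and inversion is immediate. Finally $\alpha$ maps the basic open bisection $Z(\mu,\nu)$ onto $Z(\alpha(\mu),\alpha(\nu))$, so it belongs to $\Aut(G)$.

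For condition \eqref{wfc}, I would suppose $x \in \go$ and $l \in \ZZ$ satisfy $[x]=[\alpha^l(x)]$, so there exist $m,n\ge 0$ with $\sigma^m(\alpha^l(x))=\sigma^n(x)$. Writing $x=x_1x_2\cdots$ with $r(x_1)\in V_p$ (so $r(x_i)\in V_{p+i-1}$), the range vertex of $\sigma^m(\alpha^l(x))$ lies in $V_{p+m}$ while that of $\sigma^n(x)$ lies in $V_{p+n}$; since the $V_j$ partition $E^0$, this forces $m=n$. Comparing coordinates then yields $\alpha^l(x_{m+k})=x_{m+k}$ for every $k\ge 1$. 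Because $\alpha$ restricts to a cyclic permutation of $r(x_{m+k})E^1s(x_{m+k})$ of order $k_{r(x_{m+k}),s(x_{m+k})}$, this order divides $l$ for all $k$. But Lemma~\ref{lem.6.2} provides $k_{r(x_{m+k}),s(x_{m+k})} > p+m+k-1$, which eventually exceeds $|l|$; hence $l=0$.

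For condition \eqref{cond:lc}, I would take $\Bb := \{Z(\mu) : \mu\in E^*\}$, which is a basis of compact open subsets of $\go$ (hence of compact open bisections of the unit-space groupoid). Given $\mu=\mu_1\cdots\mu_n\in E^*$, setting $l := \operatorname{lcm}\{k_{r(\mu_i),s(\mu_i)} : 1\le i\le n\} \ge 1$ makes $\alpha^l$ fix every $\mu_i$ and hence $\mu$ itself, so $\alpha^{-l}(Z(\mu))=Z(\alpha^{-l}(\mu))=Z(\mu)$, as required.

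The hard part of the argument is the verification of \eqref{wfc}: the growth condition $k_{vw}>n$ that was built into the telescoping step of Lemma~\ref{lem.6.2} is used essentially here, since any uniform upper bound on the edge multiplicities $k_{vw}$ would allow some $\alpha^l$ with $l\ne 0$ to fix entire paths and thus violate freeness of the induced $\ZZ$-action on orbits. Everything else is a routine packaging of the coordinatewise construction.
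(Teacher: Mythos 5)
Your proposal is correct and takes essentially the same route as the paper's own proof: the coordinatewise lift of the graph automorphism to $P=E^\infty$ and then to $G$, the level argument forcing $m=n$, the observation that $\alpha^l$ fixing an edge of $vE^1w$ forces $k_{vw}\mid l$ combined with $k_{vw}\to\infty$ to get \eqref{wfc}, and the basis $\Bb=\{Z(\mu):\mu\in E^*\}$ for \eqref{cond:lc}. The only cosmetic difference is that for \eqref{cond:lc} you exhibit an explicit $l$ as an lcm of edge multiplicities, whereas the paper just notes that the $\alpha$-orbit of $\mu$ is contained in the finite set $r(\mu)E^{|\mu|}s(\mu)$ and so is finite.
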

\begin{proof}
Since $\alpha$ is an automorphism of $E$, it determines a bijection of $P = P_E$ by
$\alpha(x)_i = \alpha(x_i)$ for $i \le |x|$. This map clearly commutes with $\sigma^n$
for all $n$, and so determines an algebraic automorphism $\alpha$ of $G$ as described.
Since this automorphism carries each basic open set $Z((\mu,\nu)\setminus F)$ bijectively
onto $Z\big((\alpha(\mu),\alpha(\nu))\setminus\alpha(F)\big)$, it is a homeomorphism, and
hence an automorphism of the topological groupoid $G$. In particular, we have
$\alpha^l(Z(\mu)) = Z(\alpha^l(\mu))$. Since the orbit of any $\mu \in E^*$ under
$\alpha$ is contained in the finite set $r(\mu) E^{|\mu|} s(\mu)$, it is finite, and so
there exists $l > 0$ such that $\alpha^{-l}(\mu) = \mu$, giving $\alpha^{-l}(Z(\mu)) =
Z(\mu)$. Since $E$ is row-finite, $P_E = E^\infty$, so $G$ and $\alpha$
satisfy~\eqref{cond:lc} with $\Bb = \{Z(\mu) : \mu \in E^*\}$.

It remains to show $\alpha$ satisfies condition~(\ref{wfc}). Fix $x \in \go = E^{\infty}$ and suppose $[x]=[\alpha^l(x)]$ for some $l \in \ZZ$. Then
there exist $m, n$ such that
\[
    \sigma^m(x) = \sigma^n(\alpha^l(x)).
\]
Thus, $r(\sigma^m(x))=r(\sigma^n(\alpha^l(x)))$. We have $r(x) \in V_j$ for some $j$, and
since $\alpha$ fixes vertices of the graph $E$, we then have $r(\alpha^l(x)) = r(x) \in V_j$ as
well. So $V_{j+m} \owns r(\sigma^m(x)) = r(\sigma^n(\alpha^l(x))) \in V_{j+n}$. Since the
$V_i$ are mutually disjoint, we obtain $m=n$. By replacing $x$ with $\sigma^m(x)$, we may
assume that $x=\alpha^l(x)$. For each $p \in \NN$, we have $x(p,p+1) =
\alpha^l((x)(p,p+1))$. For each $p$, let $v_p = r(x(p, p+1))$ and $w_p= s(x(p,p+1))$.
Then $l=0 \mod k_{v_pw_p}$ for all $p$. Lemma~\ref{lem.6.2} shows that $k_{v_pw_p} \to
\infty$, and it follows that $l = 0$.
\end{proof}

\begin{proof}[Proof of Theorem~\ref{thm:AFeg}]
Let $E$ be the Bratteli diagram of Lemma~\ref{lem.6.2} and let $\alpha$ be the
automorphism of the path groupoid $G$ of $E$ described in Lemma~\ref{lem.6.3}. By
Lemmas~\ref{lem.6.2}~and~\ref{lem.6.3}, we have $K_*(A)\cong K_*(C^*(G))$,
the groupoid $G$ is second-countable, amenable, locally compact, Hausdorff, ample,
minimal and principal, and $\alpha$ satisfies conditions \eqref{wfc}~and~(\ref{cond:lc}).

Now Lemma~\ref{lem:stabilise} implies that $\widetilde{G} = G \times K$ and
$\widetilde{\alpha} = \alpha \times \id$ have all the properties of $G$ and $\alpha$
discussed in the preceding paragraph. It also implies that $\cs(\widetilde{G})$ is
stable. Applying Theorem~\ref{thm:main}, we obtain a second-countable, amenable, locally
compact, Hausdorff, ample groupoid $\tginf$ that is minimal and principal and
 an inclusion $\iota_{\widetilde{G}} : \cs(\widetilde{G}) \to \cs(\tginf)$ such that
$K_*(\iota_{\widetilde{G}})$ is an isomorphism and $\cs(\tginf)$ is a Kirchberg algebra
in the UCT class.

First suppose that $A$ is nonunital. Since $\Gg := \tginf$ has noncompact unit space,
$\cs(\tginf)$ is also nonunital. So $A$ and $\cs(\Gg)$ are nonunital UCT Kirchberg
algebras with the same $K$-theory because
\[
    K_*(A)\cong K_*(C^*(E))\cong K_*(C^*(G))\cong K_*(C^*(\widetilde{G}))\cong K_*(C^*(\tginf)) = K_*(C^*(\Gg)).
\]
Hence $A\cong\cs(\Gg)$ by the Kirchberg--Phillips classification theorem \cite{Kirchberg,
Phillips}.

Now suppose that $A$ is unital. There is an order isomorphism $\phi : K_0(C^*(E)) \cong
\varinjlim (\ZZ V_n, B_n)$ where $B_n \in M_{V_{n+1}, V_n}(\NN)$ is given by $B_n(v,w) =
|vE^1w|$ (see, for example, \cite{Drinen}). So $D = \varinjlim(\ZZ V_n, B_n)$, and $D^+$
is the union of the images of the subsemigroups $\NN V_n \subseteq \ZZ V_n$. Writing
$B_{n,\infty} : \ZZ V_n \to \varinjlim(\ZZ V_n, B_n)$ for the canonical inclusion, we
have $\phi([p_v]) = B_{n,\infty}(\delta_v)$ for $v \in V_n$. Since the isomorphism
$C^*(G) \cong C^*(E)$ carries $1_{Z(v)}$ to $p_v$ \cite{KPRR}, we obtain an isomorphism
$\tilde\phi : K_0(C^*(G)) \cong D$ such that $\tilde\phi(1_{Z(v)}) =
B_{n,\infty}(\delta_v)$ for $v \in V_n$.

By assumption, $\pi([1_A]) \in D^+$, so there exist $n \ge 1$ and $a \in \NN V_n$ such
that $\pi([1_A]) = B_{n,\infty}(a)$. Define $V \subseteq \widetilde{G}^{(0)} = G^{(0)}
\times \ZZ$ by $V := \bigcup_{v \in V_n} \bigcup_{1 \le j \le a(v)} Z(v) \times \{j\}$;
observe that $V$ is clopen. The canonical isomorphism $K_0(\cs(\widetilde{G})) \cong
K_0(\cs(G))$ induced by the isomorphism $\cs(\widetilde{G}) \cong \cs(G) \otimes
\Kk(\ell^2(\ZZ))$ carries $[1_V]$ to $\sum_{v \in V_n} a(v)[1_{Z(v)}]$. So composing this
isomorphism with $\tilde\phi$ gives an isomorphism $K_0(C^*(\widetilde{G})) \cong D$ that
takes $[1_V]$ to $\pi([1_A])$.

Since $K_*(\iota_{\widetilde{G}}) : K_*(\cs(\widetilde{G})) \to K_*(\cs(\tginf))$ is an
isomorphism, it follows that
\[
    (K_0(A), 1_A) \cong (K_0(\cs(\tginf)), [1_W]),
\]
where $1_W=\iota_{\widetilde{G}}(1_V)$. Let $\Gg$ be the restriction $\tginf|_{W} = \{g
\in \tginf : r(g), s(g) \in W\}$ of $\tginf$ to the compact open subset $W$ of its unit
space. Then $\cs(\Gg) \cong 1_{W} \cs(\tginf) 1_{W}$ is a corner of $\cs(\tginf)$, which
is full since $\cs(\tginf)$ is simple. We therefore have
\[
(K_0(\cs(\Gg)), 1_{\cs(\Gg)}) \cong (K_0(A), 1_A).
\]
Since $\tginf$ is amenable, principal and ample, so is $\Gg$. Since simplicity,
nuclearity, separability, pure infiniteness and membership of the UCT class pass to full
corners, $\cs(\Gg)$ is a UCT Kirchberg algebra. So the Kirchberg--Phillips theorem
\cite{Kirchberg, Phillips} gives $\cs(\Gg) \cong A$ as required.
\end{proof}

\section{Examples from rank-2 Bratteli diagrams}\label{sec:2-graph examples}

In this section, we show that Theorem~\ref{thm:main} can also be applied to groupoids
associated to the rank-2 Bratteli diagrams of \cite{PRRS}.
Recall that a matrix $A$ with nonnegative entries is \emph{proper} if each row and each
column of $A$ has at least one nonzero entry.

\begin{thm}\label{thm8.1}
Let $\{c_n: n\in \NN\}$ be positive integers. For each $n$, let $A_n,B_n\in
M_{c_{n+1},c_n}(\NN)$ be proper matrices, and let $T_n\in M_{c_n}(\NN)$ be a proper
diagonal matrix. Suppose that $A_nT_n=T_{n+1}B_n$ for all $n$ and $\varinjlim (\ZZ^{c_n},
A_n)$ is a simple dimension group not isomorphic to $\ZZ$. Let $A$ be a Kirchberg algebra
such that $K_0(A) \cong \varinjlim (\ZZ^{c_n}, A_n)$ and $K_1(A) \cong \varinjlim
(\ZZ^{c_n}, B_n)$. Further suppose, if $A$ is unital, that the isomorphism $K_0(A) \cong
\varinjlim (\ZZ^{c_n}, A_n)$ carries $[1_A]$ to an element of the canonical positive cone
of $\varinjlim (\ZZ^{c_n}, A_n)$. Then there exists an amenable, principal, ample
groupoid $\Gg$ such that $C^*(\Gg) \cong A$. If $A$ is unital, then $\go$ is compact.
\end{thm}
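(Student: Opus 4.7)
The strategy is to mirror Section \ref{sec:1-graph examples}, replacing ordinary Bratteli diagrams with the rank-$2$ Bratteli diagrams of \cite{PRRS}. First I would use \cite{PRRS} to construct a rank-$2$ Bratteli diagram $\Lambda$ whose $n$-th level has $c_n$ vertices and whose edge multiplicity matrices realise the given $A_n$, $B_n$, $T_n$. By the results of \cite{PRRS}, $\cs(\Lambda)$ is a simple A$\TT$-algebra whose $K$-theory agrees with that of $A$. After a telescoping, I would ensure both that $\Lambda$ is aperiodic and that the blue-edge multiplicities $A_n(v,w)$ between $V_{n+1}$ and each $v \in V_n$ exceed $n$; this is the rank-$2$ analogue of the growth condition arranged in Lemma \ref{lem.6.2}. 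The path groupoid $G := G_\Lambda$ is then a second-countable, amenable, locally compact, Hausdorff, ample, minimal, and principal groupoid.

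I would next define an automorphism $\alpha$ of $\Lambda$ that cyclically permutes the blue edges between each pair $(v,w)$ with $v \in V_{n+1}$, $w \in V_n$, exactly as in Section \ref{sec:1-graph examples}, while choosing the action on red edges to respect the factorisation property encoded by $A_n T_n = T_{n+1} B_n$. Because $T_n$ is diagonal, the red edges decompose as independent cycles at each vertex, and a blue cyclic permutation can be lifted to a full 2-graph automorphism by propagating the action through the commuting-square factorisations. The resulting map extends to $G$ by $\alpha(x,m-n,y) = (\alpha x, m-n, \alpha y)$, and the argument of Lemma \ref{lem.6.3} carries over: each cylinder set $Z(\mu)$ lies in a finite orbit under $\alpha$, giving condition (\ref{cond:lc}); and if $[x] = [\alpha^l(x)]$ for some $x \in \go$, then $l$ must be a common multiple of $A_n(v,w)$ at each successive level of the blue component of $x$, forcing $l = 0$ by the growth condition and hence establishing (\ref{wfc}).

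Having verified the hypotheses of Theorem \ref{thm:main} for $(G,\alpha)$, I would apply Lemma \ref{lem:stabilise} to pass to $(\widetilde{G}, \widetilde{\alpha}) = (G \times K, \alpha \times \id)$ and invoke Theorem \ref{thm:main} to obtain $\tginf$, an amenable, principal, ample groupoid whose $C^*$-algebra is a stable UCT Kirchberg algebra with the prescribed $K$-theory. If $A$ is nonunital I would set $\Gg := \tginf$ and conclude by Kirchberg--Phillips. If $A$ is unital I would use the given order-unit datum $[1_A]$ together with the identification $K_0(\cs(\widetilde{G})) \cong K_0(C^*(G)) \cong \varinjlim(\ZZ^{c_n}, A_n)$ to select a compact clopen $W \subseteq \tginf^{(0)}$ whose class in $K_0$ corresponds to $[1_A]$, set $\Gg := \tginf|_W$, and note that $\cs(\Gg) \cong 1_W \cs(\tginf) 1_W$ is a full corner of $\cs(\tginf)$ with compact unit space $W$; another application of Kirchberg--Phillips finishes the unital case.

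The hard part will be the construction of $\alpha$. In the rank-$1$ setting of Lemma \ref{lem.6.3}, any permutation of edges automatically extends to a graph automorphism, but for a $2$-graph a cyclic permutation of the blue edges must cohere simultaneously with a suitable action on the red edges via the factorisation squares $A_n T_n = T_{n+1} B_n$. I expect this to be resolvable by choosing the blue enumerations vertex-by-vertex in a way that respects the block structure of $T_n$ and then letting the red action be determined by unique completion of commuting squares; a secondary concern is verifying that the resulting $G_\Lambda$ is genuinely principal rather than merely topologically principal, which should follow from the aperiodicity arranged by the telescoping.
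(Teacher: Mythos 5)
Your overall architecture---build a rank-2 Bratteli diagram realising the given $K$-theory, equip its path groupoid with an automorphism satisfying \eqref{wfc} and \eqref{cond:lc}, stabilise, apply Theorem~\ref{thm:main}, and finish with a corner argument and Kirchberg--Phillips---is exactly the paper's. But the step you yourself flag as ``the hard part,'' the construction of $\alpha$, is a genuine gap, and the fix you propose would not work. An automorphism of a rank-2 Bratteli diagram that fixes the red edges must commute with the canonical permutation $\Ff$ of blue edges induced by the factorisation squares (since $fe=\Ff(e)f'$ must be carried to another valid factorisation). A cyclic permutation of each set $v\Lambda^{e_1}w$, as in Lemma~\ref{lem.6.3}, does not commute with $\Ff$ in general---$\Ff$ moves blue edges around the red cycles, mixing the sets $v\Lambda^{e_1}w$ for different $v$ on the same cycle---so no choice of enumerations will make it extend to a $2$-graph automorphism. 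The paper resolves this by taking $\alpha$ to \emph{be} a power of $\Ff$: namely $\alpha(e)=\Ff^{m_n}(e)$ for $e\in V_n\Lambda^{e_1}$, where $m_{n+1}=m_n+nO_n$ and $O_n$ is the lcm of the $\Ff$-orders of the level-$n$ blue edges; the identity $\Ff^{O_n}(e)=e$ is precisely what makes this level-dependent power a well-defined functor (Lemma~\ref{lem8.3}).

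This change of automorphism also changes which growth condition you need, and here your proposal uses the wrong one. Because $\alpha$ involves the red direction, the relation $[x]=[\alpha^l(x)]$ produces not only a blue shift but also a red shift $s$, and the resulting constraint is $lm_t-s\equiv 0 \pmod{o(f_t)}$ for all large $t$, where $o(f_t)$ is the $\Ff$-order of the level-$t$ blue edge of $x$. To force $l=0$ one needs $o(e)>n\cdot m_n$ for all $e\in V_n\Lambda^{e_1}$ (Lemma~\ref{lem8.4}), and since the $m_n$ are themselves built from lcm's of the orders at earlier levels, arranging this requires the carefully recursive telescoping of Lemma~\ref{lem8.2} (each entry of $A_{l(n+2),l(n+1)}$ must exceed $(n+1)M_{n+1}$, where $M_{n+1}$ involves a product over all entries of $A_{l(n+1),l(n)}T_{l(n)}$). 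Your condition ``$A_n(v,w)>n$'' is the rank-1 condition and is far too weak; and your freeness argument (``$l$ is a common multiple of the $A_n(v,w)$'') ignores the red shift $s$ entirely. The endgame of your proof (stabilisation, the corner $\tginf|_W$, Kirchberg--Phillips) is correct and matches the paper.
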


The main ingredients in the proof of Theorem~\ref{thm8.1} are Lemmas
\ref{lem8.2}--\ref{lem8.4}. We briefly recall the notions involved, starting with rank-2
Bratteli diagrams.
For a row-finite 2-graph $\Lambda$ we will identify the \emph{blue graph} $f_1^*\Lambda
:= \{(\lambda,n): n\in \NN, \lambda\in \Lambda^{n e_1}\}$ discussed in \cite{PRRS} with
the subgraph $\Lambda^{\NN e_1}\subseteq \Lambda$, and similarly we identify the
\emph{red graph} $f_2^*\Lambda$ with $\Lambda^{\NN e_2}\subseteq \Lambda$. Following
\cite{PRRS}, a \emph{rank-2 Bratteli diagram} (of infinite depth) is a row-finite 2-graph
$\Lambda$ such that $\Lambda^0$ is a disjoint union of nonempty finite sets or
\emph{levels} $(V_n)_{n=0}^\infty$ which satisfy:
\begin{itemize}
\item[(1)] for every blue edge $e\in \Lambda^{e_1}$, there exists $n$ such that $e
    \in V_n E^1 V_{n+1}$;
\item[(2)] the blue graph has no sources and all sinks belong to $V_0$; and
\item[(3)] every vertex of $\Lambda$ lies on an \emph{isolated cycle}\footnote {A
    \emph{cycle} in a $k$-graph $\Lambda$ is an element $\lambda\in\Lambda$ with the
    property that $d(\lambda)\neq 0$, $r(\lambda)=s(\lambda)$ and if $\lambda =
    \lambda'\lambda''$ with $0 < d(\lambda') < d(\lambda)$, then $s(\lambda')\neq
    s(\lambda)$. The cycle $\lambda$ is \emph{isolated} if, whenever $\lambda =
    \lambda'\lambda''$ with $d(\lambda') \not= 0$, the sets
    \[
    r(\lambda)\Lambda^{d(\lambda')}\setminus \{\lambda'\} \qquad\textrm{and}\qquad
    \Lambda^{d(\lambda'')} s(\lambda)\setminus \{\lambda''\}
    \]
    are both empty. See \cite[p.~140]{PRRS}.} of the red graph, and $\Lambda^{e_2} =
    \bigcup_n V_n \Lambda^{e_2}V_n$.
\end{itemize}
The $\cs$-algebra $C^*(\Lambda)$ of a rank-2 Bratteli diagram $\Lambda$ is an A$\TT$
algebra. Its $K$-groups can be computed as follows: Recall that every $V_n$ is a disjoint
union of sets $\bigcup_{i=1}^{c_n} V_{n,i}$ where each $V_{n,i}$ consists of the vertices
on an isolated red cycle. For each $0\leq n$, $1\leq j\leq c_n$, and $1\leq i\leq
c_{n+1}$ define
\[
A_n(i,j):=|\{v\Lambda^{e_1}V_{n+1,i}\}|\quad\text{ and }\quad B_n(i,j):=|\{V_{n,j}\Lambda^{e_1}w\}|,
\]
for any choice of $v\in V_{n,j}$, $w\in V_{n+1,i}$ (the result is independent of these
choices). By \cite[Theorem 4.3]{PRRS}, there is an isomorphism $K_0(C^*(\Lambda))\cong
\lim (\ZZ^{c_n}, A_n)$ that carries $s_v$ to $\delta_j \in \ZZ^{c_n}$ if $v \in V_{n,j}
\subseteq V_n$\label{pg:isodesc}, and $K_1(C^*(\Lambda))\cong \lim (\ZZ^{c_n}, B_n)$.

For each $n$, define a diagonal matrix $T_n\in M_{c_n}(\NN)$ by $T_n(j,j):=|V_{n,j}|$.
Then $A_n T_n = T_{n+1} B_n$ for all $n$ (see \cite[Lemma~4.2]{PRRS}).

In a rank-2 Bratteli diagram $\Lambda$ the factorisation property induces a permutation
$\Ff$ of the edges of the blue graph: For each $e\in \Lambda^{e_1}$, let $f$ be the
unique element of $\Lambda^{e_2} r(e)$, and define $\Ff(e)$ to be the unique element of
$\Lambda^{e_1}$ such that $fe=\Ff(e)f'$ for some red edge $f'$. The \emph{order} $o(e)$
of $e$ is then defined to be the the smallest $k>0$ such that $\Ff^k(e)=e$. Observe that
if $e \in V_n$, then $o(e)\leq |V_n\Lambda^{e_1}|$ is finite since $\Lambda$ is
row-finite and $V_n$ is finite. For each $n \in \NN$, define
\[
    O_n:=\operatorname{lcm}\{o(e)  \mid e \in V_n\Lambda^{e_1}\},
\]
and $m_n$ (inductively) by $m_0:=0$ and $m_{n+1}:= m_n + nO_n$ for $n \ge 0$ (see
Figure~\ref{fig1} for an illustrative example).

\begin{lem}\label{lem8.2}
Let $\{c_n: n\in \NN\}$ be positive integers. For each $n$, let $A_n,B_n\in
M_{c_{n+1},c_n}(\NN)$ be proper matrices, and let $T_n\in M_{c_n}(\NN)$ be a proper
diagonal matrix. Suppose that $A_nT_n=T_{n+1}B_n$ for all $n$ and that $\lim (\ZZ^{c_n},
A_n)$ is a simple dimension group not isomorphic to $\ZZ$. Then there exists a rank-2
Bratteli diagram $\Lambda$ with $\Lambda^0=\bigcup_{n=0}^\infty V_{n}$ such that
$K_0(C^*(\Lambda))\cong \lim (\ZZ^{c_n}, A_n)$, $K_1(C^*(\Lambda))\cong \lim (\ZZ^{c_n},
B_n)$ and	
$$o(e)>n\cdot m_n,$$
for every $n\in\NN$ and $e\in V_n\Lambda^{e_1}$.
\end{lem}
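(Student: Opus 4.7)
The plan is to invoke the existing construction of rank-2 Bratteli diagrams in \cite{PRRS} to produce a diagram $\Lambda_0$ realising the prescribed $K$-theoretic data, and then telescope $\Lambda_0$ along a carefully chosen subsequence of levels so as to force the orders $o(e)$ to grow as required. Specifically, \cite[Theorem~4.3]{PRRS} (together with the analysis at the start of \cite[Section~6]{PRRS}) shows that from the data $(A_n, B_n, T_n)$ satisfying $A_nT_n = T_{n+1}B_n$ we can build a rank-2 Bratteli diagram $\Lambda_0$ with levels $(V^0_n)$ such that the edge-counts and isolated red cycles realise $A_n$ and $T_n$, yielding $K_0(\cs(\Lambda_0)) \cong \varinjlim(\ZZ^{c_n}, A_n)$ and $K_1(\cs(\Lambda_0)) \cong \varinjlim(\ZZ^{c_n}, B_n)$.

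Next I would telescope $\Lambda_0$: given a strictly increasing sequence $0 = n_0 < n_1 < n_2 < \cdots$, the telescoped rank-2 Bratteli diagram $\Lambda$ has levels $V_k := V^0_{n_k}$ and morphisms $V_k \Lambda^{p e_1 + q e_2} V_{k+1}$ identified with $V^0_{n_k} \Lambda_0^{p e_1 + q e_2} V^0_{n_{k+1}}$ (the red cycles at level $V^0_{n_k}$ are preserved under telescoping, so condition (3) of a rank-2 Bratteli diagram persists). Since the telescoped matrices $A_{n_{k+1}-1}\cdots A_{n_k}$ and $B_{n_{k+1}-1}\cdots B_{n_k}$ determine the same direct limits, the $K$-theory is unchanged.

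Now I would choose the sequence $(n_k)$ inductively so that the order bound holds. Suppose $n_0 < \cdots < n_k$ have been chosen. Then levels $V_0, \dots, V_k$ of $\Lambda$ are fixed, so $O_0, \dots, O_{k-1}$ and hence $m_0, \dots, m_k$ are determined. I then pick $n_{k+1}$ so large that every blue edge $e \in V_k \Lambda^{e_1}$ (which corresponds to a blue path of length $n_{k+1}-n_k$ in $\Lambda_0$) has order $o(e) > k \cdot m_k$ in $\Lambda$. The key input here is that the factorisation map $\Ff$ of $\Lambda$ acts on $V_k \Lambda^{e_1}$ as a permutation built from the permutations $\Ff$ on $V^0_j\Lambda_0^{e_1}$ for $n_k \le j < n_{k+1}$, and for paths these orders multiply up: writing a blue edge of $\Lambda$ as $e = f_1 f_2 \cdots f_{n_{k+1}-n_k}$ with each $f_i$ a blue edge of $\Lambda_0$, the order of $e$ is divisible by the order of each $f_i$ in the appropriate intermediate telescope, and in particular is at least the order of $f_{n_{k+1}-n_k}$, an edge out of $V^0_{n_{k+1}-1}$. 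Since $\Lambda_0$ is row-finite but has infinitely many blue edges accumulating in the deeper levels (rows of the products $A_{n_k}\cdots A_{n_{k+1}-1}$ grow without bound because $\varinjlim(\ZZ^{c_n},A_n)$ is not $\ZZ$), these cycle lengths can be made arbitrarily large by pushing $n_{k+1}$ far enough.

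The main obstacle will be the last step: justifying that telescoping can make the orders $o(e)$ of blue edges arbitrarily large. This is a purely combinatorial statement about the factorisation permutation on rank-2 Bratteli diagrams, and the cleanest route is probably to argue (as in \cite[Lemma~6.1]{PRRS}) that one may first telescope $\Lambda_0$ once to arrange that $|V^0_{n}\Lambda^{e_1}_0 V^0_{n+1}| \to \infty$, and then note that the order $o(\cdot)$ of a blue edge in the further telescoped diagram is at least the orbit-length of the iterated $\Ff$-action on the corresponding blue path, which grows with the path length whenever the underlying cycle structure is nontrivial. The remaining verification that $\Lambda$ still satisfies conditions (1)--(3) of a rank-2 Bratteli diagram, and that $K_*(\cs(\Lambda)) \cong K_*(\cs(\Lambda_0))$, is routine from the definitions and from \cite[Theorem~4.3]{PRRS}.
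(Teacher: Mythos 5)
Your opening step (realising the prescribed $K$-theory via \cite[Proposition~6.4]{PRRS}) and your level-by-level induction for choosing the telescoping subsequence are both sound, but the crucial step---making the orders $o(e)$ grow by telescoping the \emph{diagram} $\Lambda_0$---does not work. In the telescoped diagram a blue edge is a blue path $e=f_1\cdots f_L$ of $\Lambda_0$, and since each vertex carries a unique red edge, commuting the red edge at $r(f_1)$ past the path gives $\Ff_{\mathrm{tel}}(f_1\cdots f_L)=\Ff_0(f_1)\cdots\Ff_0(f_L)$, hence $\Ff_{\mathrm{tel}}^k(f_1\cdots f_L)=\Ff_0^k(f_1)\cdots\Ff_0^k(f_L)$ and, by uniqueness of factorisations, $o(e)=\operatorname{lcm}\{o_0(f_i):1\le i\le L\}$. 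So the order of a blue path is governed by the orders of its constituent edges, not by the number of blue paths between the chosen levels, and it need not grow with $L$ at all. Concretely, take $c_n=1$, $A_n=B_n=(2)$ and $T_n=(1)$ for all $n$: then $\varinjlim(\ZZ^{c_n},A_n)\cong\ZZ[1/2]$ is a simple dimension group not isomorphic to $\ZZ$, but in the diagram of \cite[Proposition~6.4]{PRRS} every blue edge has order $A_n(1,1)T_n(1,1)=2$, so every blue path, and hence every blue edge of every telescope of $\Lambda_0$, has order at most $2$; no choice of subsequence can then achieve $o(e)>n\cdot m_n$ for $n\ge 2$. The same example defeats your fallback of first arranging $|V^0_n\Lambda_0^{e_1}V^0_{n+1}|\to\infty$: the number of blue edges grows, but the cycle lengths of the factorisation permutation do not. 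Your phrase ``at least the order of $f_{n_{k+1}-n_k}$, an edge out of $V^0_{n_{k+1}-1}$'' also gives nothing, since that order is determined by the single matrix $A_{n_{k+1}-1}$ and is bounded independently of how far you push $n_{k+1}$.

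The repair, which is what the paper does, is to telescope the \emph{matrix data} first and only then apply \cite[Proposition~6.4]{PRRS}: for the diagram built from $A'_n:=A_{l(n+1),l(n)}$ and $T'_n:=T_{l(n)}$, that proposition produces a factorisation permutation whose cycles on $V_{n,j}\Lambda^{e_1}V_{n+1,i}$ have length exactly $A'_n(i,j)T'_n(j,j)$, i.e.\ as long as the entries of the \emph{product} matrices, and simplicity of the non-$\ZZ$ dimension group lets one choose $l(n)$ recursively so that these dominate any prescribed bound. (One further wrinkle your sketch elides: $m_n$ is defined in terms of the orders $O_j$ of the diagram being built, so the paper introduces an auxiliary majorant $M_n\ge m_n$ computed purely from the matrix data to make the recursion well founded.) The diagram obtained from the telescoped matrices is genuinely different from any telescope of the diagram obtained from the original matrices, even though both have the same $K$-theory.
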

\begin{proof}
We follow the proof of \cite[Theorem 6.2(2)]{PRRS} modulo a small adjustment of the
construction of the subsequence $(l(n))_{n=1}^\infty$ which we define as follows:

For $n\geq m$ define $A_{n,m}:=A_{n-1}A_{n-2}\dots A_m$. As in \cite{PRRS} we can find a
subsequence $(l'(i))_{i=1}^\infty$ of $\NN$ with the property that every entry of
$A_{l'(n+1),l'(n)}$ is at least $n$, and hence
\begin{eqnarray}\label{eqn8}
\lim_{k\to \infty} \min_{p,q}\{A_{l'(i+k),l'(i)}(p,q)\} = \infty\qquad\textrm{ for all }i.
\end{eqnarray}
Let $l(0):=l'(1)$. Using induction we construct sequences $M_n$ and $l(n+1)$, for all
$n\in\NN$. Step $0$: Put $M_0:=0$ and $l(0+1):=l'(2)$. Step $1$: Put $M_1:=0$ and
$l(1+1):=l'(3)$. For the induction step $n+1$, put $M_{n+1}:=M_n+n \cdot
\prod_{i,j}A_{l(n+1),l(n)}(i,j)T_{l(n)}(j,j)$ and use (\ref{eqn8}) to find
$l(n+2)>l(n+1)$ such that every entry of $A_{l(n+2),l(n+1)}$ is strictly grater than
$(n+1)M_{n+1}$.

Proceeding as in the proof of \cite[Theorem 6.2(2)]{PRRS} we let $\Lambda$ be the rank-2
Bratteli diagram obtained by applying \cite[Proposition 6.4]{PRRS} to the data
\[
    c'_n:=c_{l(n)}, \quad A'_n:=A_{l(n+1),l(n)},\quad B'_n:=B_{l(n+1),l(n)} \quad \text{ and } \quad T'_n:=T_{l(n)}
\]
for $n\in \NN$. As in  \cite{PRRS} it follows that $K_0(C^*(\Lambda))\cong \lim
(\ZZ^{c_n}, A_n)$ and $K_1(C^*(\Lambda))\cong \lim (\ZZ^{c_n}, B_n)$.

We now prove that $o(e)>n\cdot M_n \geq n\cdot m_n$ for every $n\in\NN$ and $e\in
V_n\Lambda^{e_1}$ using induction. The statement is trivial when $n=0$ and 1 because each
$o(e) \ge 1$ and $M_1=m_1=0$. For the induction step, we first consider $e\in
V_{n}\Lambda^{e_1}$, say $e \in V_{{n},j}\Lambda V_{{n+1},i}$. By \cite[Proposition
6.4]{PRRS},
\begin{eqnarray}\label{eqn9}
o(e)=A'_{n}(i,j)|V_{{n},j}|=A'_{n}(i,j)T'_{n}(j,j)=A_{l(n+1),l(n)}(i,j)T_{l(n)}(j,j).
\end{eqnarray}
Hence
\[
    m_{n+1}=m_n+n\cdot \operatorname{lcm}\{o(e)  \mid e \in V_n\Lambda^{e_1}\} \leq M_n+ n\cdot \prod_{i,j}A_{l(n+1),l(n)}(i,j)T_{l(n)}(j,j)= M_{n+1},
\]
where we have used that $m_n\leq M_n$.

Secondly we consider any $e\in V_{n+1}\Lambda^{e_1}$. Since $T_{l(n)}(j,j)$ is positive,
we know from~\eqref{eqn9} that $o(e)\geq A_{l(n+2),l(n+1)}(i,j)$ for some (in fact for
any) $i,j$. By construction every entry of $A_{l(n+2),l(n+1)}$ is strictly grater than
$(n+1)M_{n+1}$, so
\[o(e)>(n+1)M_{n+1}\geq (n+1)\cdot m_{n+1}.\qedhere\]
\end{proof}

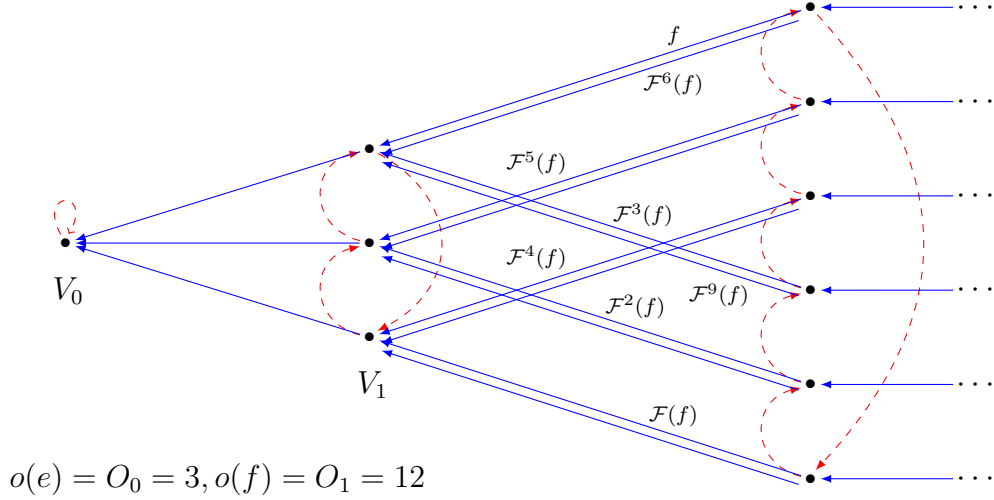
\begin{figure}[ht]
\begin{center}
\begin{tikzpicture}[xscale=2, yscale=1.25]
    \node[inner sep=0.5pt, circle] (05) at (0,7.5) {\tiny$\bullet$};
    \node[inner sep=0.5pt, circle] (24) at (2,6.5) {\tiny$\bullet$};
    \node[inner sep=0.5pt, circle] (24b) at (2,6.4) {$ \ \ $};
    \node[inner sep=0.5pt, circle] (25) at (2,7.5) {\tiny$\bullet$};
    \node[inner sep=0.5pt, circle] (25b) at (2,7.4) {$ \ \ $};
    \node[inner sep=0.5pt, circle] (26) at (2,8.5) {\tiny$\bullet$};
    \node[inner sep=0.5pt, circle] (26b) at (2,8.4) {$ \ \ $};
    \node[inner sep=0.5pt, circle] (f) at (4,9.7) {\tiny{$f$}};	
    \node[inner sep=0.5pt, circle] (f) at (4,5.67) {\tiny{$\Ff(f)$}};
    \node[inner sep=0.5pt, circle] (f) at (3.75,6.83) {\tiny{$\Ff^2(f)$}};
    \node[inner sep=0.5pt, circle] (f) at (3.8,7.8) {\tiny{$\Ff^3(f)$}};	
    \node[inner sep=0.5pt, circle] (f) at (3.1,7.35) {\tiny{$\Ff^4(f)$}};	
    \node[inner sep=0.5pt, circle] (f) at (3.1,8.35) {\tiny{$\Ff^5(f)$}};	
    \node[inner sep=0.5pt, circle] (f) at (4,9.19) {\tiny{$\Ff^6(f)$}};
    \node[inner sep=0.5pt, circle] (f) at (4.3,6.95) {\tiny{$\Ff^9(f)$}};
    \node[inner sep=0.5pt, circle] (f) at (0.02,7.0) {$V_0$};
    \node[inner sep=0.5pt, circle] (f) at (2.02,6.0) {$V_1$};
	\node (f) at (1,5) {$o(e)=O_0=3, o(f)=O_1=12$};
    \node[inner sep=0.5pt, circle] (45) at (4.9,5) {\tiny$\bullet$};
    \node[inner sep=0.5pt, circle] (45b) at (4.9,4.9) {$ \ \ $};
    \node[inner sep=0.5pt, circle] (55) at (6,5) {$\cdots$};
    \node[inner sep=0.5pt, circle] (46) at (4.9,6) {\tiny$\bullet$};
	\node[inner sep=0.5pt, circle] (46b) at (4.9,5.9) {$ \ \ $};
    \node[inner sep=0.5pt, circle] (56) at (6,6) {$\cdots$};
    \node[inner sep=0.5pt, circle] (47) at (4.9,7) {\tiny$\bullet$};
    \node[inner sep=0.5pt, circle] (47b) at (4.9,6.9) {$ \ \ $};
    \node[inner sep=0.5pt, circle] (57) at (6,7) {$\cdots$};
    \node[inner sep=0.5pt, circle] (48) at (4.9,8) {\tiny$\bullet$};
    \node[inner sep=0.5pt, circle] (48b) at (4.9,7.9) {$ \ \ $};
    \node[inner sep=0.5pt, circle] (58) at (6,8) {$\cdots$};
    \node[inner sep=0.5pt, circle] (49) at (4.9,9) {\tiny$\bullet$};
    \node[inner sep=0.5pt, circle] (49b) at (4.9,8.9) {$ \ \ $};
    \node[inner sep=0.5pt, circle] (59) at (6,9) {$\cdots$};
    \node[inner sep=0.5pt, circle] (410) at (4.9,10) {\tiny$\bullet$};
	\node[inner sep=0.5pt, circle] (410b) at (4.9,9.9) {$ \ \ $};
    \node[inner sep=0.5pt, circle] (510) at (6,10) {$\cdots$};
%
    \draw[-latex, red, dashed] (410) edge[out=300,in=60] (45); 
    \draw[-latex, red, dashed] (45) edge[out=160,in=200] (46);
    \draw[-latex, red, dashed] (46) edge[out=160,in=200] (47);
    \draw[-latex, red, dashed] (47) edge[out=160,in=200] (48);
    \draw[-latex, red, dashed] (48) edge[out=160,in=200] (49);
    \draw[-latex, red, dashed] (49) edge[out=160,in=200] (410);
%
    \draw[-latex, red, dashed] (24) edge[out=160,in=200] (25);
    \draw[-latex, red, dashed] (25) edge[out=160,in=200] (26);
    \draw[-latex, red, dashed] (26) edge[out=320,in=40] (24);
	\path[->,every loop/.style={looseness=10}] (05)
	         edge  [in=70,out=110,loop, red, dashed] ();
%
    \draw[-latex, blue] (410)--(26); 
    \draw[-latex, blue] (48)--(24);
    \draw[-latex, blue] (47)--(26);
    \draw[-latex, blue] (46)--(25);
    \draw[-latex, blue] (45)--(24);
    \draw[-latex, blue] (49)--(25);
    \draw[-latex, blue] (410b)--(26b); 
    \draw[-latex, blue] (48b)--(24b);
    \draw[-latex, blue] (47b)--(26b);
    \draw[-latex, blue] (46b)--(25b);
    \draw[-latex, blue] (45b)--(24b);
    \draw[-latex, blue] (49b)--(25b);	
    \draw[-latex, blue] (510)--(410);	
    \draw[-latex, blue] (24)--(05);
    \draw[-latex, blue] (25)--(05);
    \draw[-latex, blue] (26)--(05);
    \draw[-latex, blue] (59)--(49);
    \draw[-latex, blue] (58)--(48);
    \draw[-latex, blue] (57)--(47);
    \draw[-latex, blue] (56)--(46);
    \draw[-latex, blue] (55)--(45);
\end{tikzpicture}
\caption{Since $f\in V_1\Lambda^{e_1}$, we have $r(\Ff^{lO_{1}}(f)) = r(f)$
for any $l\in \NN$. Indeed, $r(\Ff^{lO_{0}}(f)) = r(f)$ for any $l\in \NN$.}
\label{fig1}
\end{center}
\end{figure}

We now recall the definition of the groupoid $G$ associated to a rank-2 Bratteli diagram
$\Lambda$ as defined in \cite{KP}. Denote by $\Omega_k$ the $k$-graph with vertices
$\Omega^0_k:=\NN^k$, paths  $\Omega^m_k:=\{(n,n+m), n\in \NN^k\}$ for $m\in \NN^k$,
$r((n,n+m))=n$ and $s((n,n+m))=n+m$. The \emph{infinite paths} in a $k$-graph $\Lambda$
with no sources are degree-preserving functors $x\colon \Omega_k\to \Lambda$. The
collection of all infinite paths of $\Lambda$ is denoted $\Lambda^\infty$, and we write
$r(x)$ for the vertex $x(0)$ and call it the the range of $x$. For $p\in \NN^k$ and $x\in
\Lambda^\infty$, $\sigma^p(x)\in \Lambda^\infty$ is defined by
$\sigma^p(x)(m,n):=x(m+p,n+p)$ \cite[Definition~2.1]{KP}. For $\lambda\in \Lambda$ and
$x\in \Lambda^\infty$ with $s(\lambda)=r(x)$ we write $\lambda x$ for the unique element
$y\in \Lambda^\infty$ such that $\lambda=y(0,d(\lambda))$ and $x=\sigma^{d(\lambda)}y$.
Then, as a set,
\[
    G=\{(x,n,y)\in \Lambda^\infty\times \ZZ^2 \times \Lambda^\infty \colon \sigma^{l}(x)=\sigma^{m}(y), n=l-m\}.
\]
Composition and inverse are given by $(x,n,y)(y,l,z)=(x,n+l,z)$ and
$(x,n,y)^{-1}=(y,-n,x)$, implying $r(x,n,y)=x$, $s(x,n,y)=y$. The topology on the
groupoid $G$ has basic open sets $Z(\lambda,\mu):=\{(\lambda{z},d(\lambda)-d(\mu),
\mu{z}) \colon z\in s(\lambda)\Lambda^\infty\}$, indexed by pairs $\lambda,\mu\in
\Lambda$ with $s(\lambda)=s(\mu)$. We call the topological groupoid $G$ the \emph{path
groupoid} of $\Lambda$. It is second countable, amenable, locally compact, Hausdorff and
ample.

\begin{lem}\label{lem8.3}
Let $\Lambda$ be a rank-2 Bratteli diagram. There is a unique automorphism $\alpha$ of
$\Lambda$ such that $\alpha(e) = \Ff^{m_n}(e)$ for all $e\in V_n\Lambda^{e_1}$. This
$\alpha$ induces a homeomorphism, also denoted $\alpha$, of $\Lambda^{\infty}$ by
$\alpha(x):=\alpha\circ x$, which in turn induces an automorphism $\alpha$ of $G$ such
that \[\alpha((x,m,y)) = (\alpha(x), m, \alpha(y)).\]
\end{lem}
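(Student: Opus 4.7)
The plan is to produce $\alpha$ as a $2$-graph automorphism of $\Lambda$ and then extend by naturality to $\Lambda^\infty$ and $G$. Writing $f_v$ for the unique red edge with $s(f_v)=v$ and $\rho_n\colon V_n\to V_n$ for the ``next vertex'' map on the red cycles at level $n$ (so $\rho_n(v) = r(f_v)$), I observe that the prescription $\alpha(e)=\Ff^{m_n}(e)$ on $V_n\Lambda^{e_1}$ forces the rest of $\alpha$: since the defining factorization $f_v e = \Ff(e) f'$ gives $r(\Ff(e)) = \rho_n(r(e))$ and $s(\Ff(e))=\rho_{n+1}(s(e))$, iterating yields $r(\Ff^{k}(e))=\rho_n^k(r(e))$ and $s(\Ff^{k}(e))=\rho_{n+1}^k(s(e))$, so range- and source-compatibility force $\alpha|_{V_n}=\rho_n^{m_n}$; and since the red graph is a disjoint union of isolated cycles, compatibility with sources of red edges then forces $\alpha(f_v) = f_{\rho_n^{m_n}(v)}$. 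Uniqueness follows. For existence I need to check that this prescription is consistent and respects the factorization property.

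The consistency check is that the two recipes for $\alpha|_{V_{n+1}}$ (via range of $V_{n+1}\Lambda^{e_1}$ and via source of $V_n\Lambda^{e_1}$) agree, that is, $\rho_{n+1}^{m_{n+1}}=\rho_{n+1}^{m_n}$, or equivalently $\rho_{n+1}^{nO_n}=\mathrm{id}$. I expect this to be the main obstacle. To verify it, for any $e\in V_n\Lambda^{e_1}$ the identity $\Ff^{o(e)}(e)=e$ combined with source-compatibility gives $\rho_{n+1}^{o(e)}(s(e))=s(e)$; since $A_n$ is proper (and the red-cycle factorization spreads edges across each $V_{n+1,i}$), every $w\in V_{n+1}$ is $s(e)$ for some $e\in V_n\Lambda^{e_1}$, so the order of $\rho_{n+1}$ at each $w$ divides $O_n$, and hence $\rho_{n+1}^{nO_n}=\mathrm{id}$. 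The factorization check amounts to verifying $f_{\rho_n^{m_n}(v)}\Ff^{m_n}(e) = \Ff^{m_n+1}(e)\,f_{\rho_{n+1}^{m_n}(w)}$ whenever $f_v e=\Ff(e)f_w$; this is immediate because it is precisely the defining factorization applied to the blue edge $\Ff^{m_n}(e)$, whose range is $\rho_n^{m_n}(v)$ and whose source is $\rho_{n+1}^{m_n}(w)$. Since the two blue skeletons and the red skeleton, together with the factorization rules, determine $\Lambda$ completely, this data extends uniquely to a degree-preserving bijective functor, i.e.\ a $2$-graph automorphism.

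Extension to $\Lambda^\infty$ is then immediate: since $\alpha\colon\Lambda\to\Lambda$ is a degree-preserving bijective functor, for any infinite path $x\colon\Omega_2\to\Lambda$ the composition $\alpha\circ x$ is again an infinite path, and the resulting map on $\Lambda^\infty$ is a bijection with inverse $x\mapsto\alpha^{-1}\circ x$, carrying each basic clopen set $Z(\lambda)$ bijectively onto $Z(\alpha(\lambda))$, hence a homeomorphism. Because $\alpha$ is functorial it commutes with every shift $\sigma^p$, so the formula $\alpha((x,m,y)) = (\alpha(x),m,\alpha(y))$ is well defined on $G$: if $\sigma^l(x)=\sigma^{l'}(y)$ and $m=l-l'$, then $\sigma^l(\alpha(x))=\alpha(\sigma^l(x))=\alpha(\sigma^{l'}(y))=\sigma^{l'}(\alpha(y))$. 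The groupoid operations are preserved by direct inspection of the formulae $(x,n,y)(y,l,z)=(x,n+l,z)$ and $(x,n,y)^{-1}=(y,-n,x)$, and continuity of $\alpha$ and its inverse on $G$ follows since each basic open bisection $Z(\lambda,\mu)$ is mapped to $Z(\alpha(\lambda),\alpha(\mu))$.
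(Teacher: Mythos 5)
Your proposal is correct and follows essentially the same route as the paper: uniqueness from the fact that $\Lambda$ is generated by its blue and red edges (each $v\Lambda^{e_2}$ being a singleton), the key consistency check at level $n+1$ coming from $\Ff^{O_n}(e)=e$ together with $m_{n+1}=m_n+nO_n$ (your identity $\rho_{n+1}^{nO_n}=\id$ is exactly the paper's computation $s(\Ff^{m_{n+1}}(e))=s(\Ff^{m_n}(e))$), compatibility with the factorisation squares, and then the functorial extension to $\Lambda^\infty$ and to $G$ via commutation with the shifts. Your packaging in terms of the vertex permutations $\rho_n$ and the explicit verification that the commuting squares are preserved is, if anything, slightly more careful than the paper's ``routine to check''.
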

\begin{proof}
There is at most one automorphism with the desired property because each $v\Lambda^{e_2}$
is a singleton and $\Lambda$ is generated as a category by $\Lambda^{e_1}\cup
\Lambda^{e_2}$. To show that there exist such an $\alpha\colon \Lambda\to \Lambda$, first
we consider $e,f \in \Lambda^{e_1}$ with $s(e)=r(f)$. We show
$s(\alpha(e))=r(\alpha(f))$. Let $n$ denote the level of $r(e)$. Then $r(f)$ is on level
$n+1$. Since $\Ff$ is a morphism and $\Ff^{O_n}(e)=e$ we get
$$r(\alpha(f))=r(\Ff^{m_{n+1}}(f))=s(\Ff^{m_{n+1}}(e))=s(\Ff^{m_{n} + nO_{n}}(e))
=s(\Ff^{m_n}(e))=s(\alpha(e)).$$

Thus we can extend $\alpha$ edgewise so that it is defined on any blue path. Because
$\Lambda$ is a rank-2 Bratelli diagram, $\alpha$ extends to an automorphism of $\Lambda$
as follows:  for $\lambda \in \Lambda$, factor $\lambda$ so that $\lambda =
\lambda_1\lambda_2$ where $\lambda_1$ is blue and $\lambda_2$ is red. Then
$\alpha(\lambda_1)$ makes sense and we define  $\alpha(\lambda_2)$ to be the unique red
path with degree $d(\lambda_2)$ and range equal to $s(\alpha(\lambda_1))$.  Now it is
routine to check that the formula $\alpha(\lambda) := \alpha(\lambda_1)\alpha(\lambda_2)$
is bijective and preserves composition and hence defines an automorphism of $\Lambda$.

Now $\alpha$ induces a homeomorphism $\alpha:\Lambda^{\infty} \to \Lambda^{\infty}$ by
$\alpha(x):=\alpha\circ x$. One checks that $\sigma^m\circ \alpha=\alpha\circ \sigma^m$
for all $m\in\NN^k$, and so there is an algebraic automorphism $\alpha$ of $G$ given by
\[
    \alpha((x,m,y)) = (\alpha(x), m, \alpha(y)).
\]
Since $\alpha:\Lambda^{\infty} \to \Lambda^{\infty}$ carries each $Z(\lambda,\mu)$
bijectively onto $Z(\alpha(\lambda), \alpha(\mu))$, $\alpha$ is a homeomorphism, and
hence an automorphism of the topological groupoid $G$.
\end{proof}

\begin{lem}\label{lem8.4}
Let $\Lambda$ be a rank-2 Bratteli diagram. Suppose that $o(e)>n\cdot m_n$ for every
$n\in\NN$ and $e\in V_n\Lambda^{e_1}$. Then the automorphism $\alpha$ of $G$ given by
Lemma~\ref{lem8.3} satisfies conditions \eqref{wfc}~and~\eqref{cond:lc}, and $G$ is
principal and minimal.
\end{lem}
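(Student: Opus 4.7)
The proof has four parts, all underpinned by a single factorisation identity relating blue edges at different red positions. For any infinite path $y \in \Lambda^\infty$, any $m \ge 1$, and any $b \in \NN$, write
\[
    e_m^{(b)} := y\big((m-1, b), (m, b)\big) \in V_{\ell_m}\Lambda^{e_1},
\]
where $\ell_m$ is the common blue level of the range vertices---independent of $b$ because red edges stay within a single $V_n$. First I would establish that for $b_1 \le b_2$,
\[
    e_m^{(b_1)} = \Ff^{b_2 - b_1}\big(e_m^{(b_2)}\big),
\]
by comparing the blue-then-red and red-then-blue factorisations of $y\big((m-1, b_1), (m, b_2)\big) \in \Lambda^{(1, b_2 - b_1)}$ and iterating the definition of $\Ff$. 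Since $\alpha$ acts as $\Ff^{m_{\ell_m}}$ on $V_{\ell_m}\Lambda^{e_1}$, this identity also controls the action of $\alpha^l$ on the blue edges of any shifted path.

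Principality and \eqref{wfc} then follow by careful use of this identity, exploiting that $\alpha$ preserves blue levels. For principality, suppose $\sigma^{p_1}(x) = \sigma^{p_2}(x)$ with $p_j = (a_j, b_j) \in \NN^2$; matching blue levels forces $a_1 = a_2$, and after shifting $x$ one reduces to $\sigma^{(0, k)}(z) = z$ for some $z \in \Lambda^\infty$ and $k := |b_2 - b_1|$. The factorisation identity applied to $z$ gives $\Ff^k(e_m^{(k)}) = e_m^{(k)}$, so $o(e_m^{(k)}) \mid k$ for every $m \ge 1$; since $o(e_m^{(k)}) > \ell_m m_{\ell_m} \to \infty$, this forces $k = 0$. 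For \eqref{wfc}, if $[x] = [\alpha^l(x)]$ then there exist $p_1, p_2$ with $\sigma^{p_1}(\alpha^l(x)) = \sigma^{p_2}(x)$; level-matching and the commutation of $\alpha$ with $\sigma$ reduce this to $\alpha^l(\sigma^{(0, b_1)}y) = \sigma^{(0, b_2)}y$ for a suitable $y$, and comparing blue edges at position $(m-1, 0) \to (m, 0)$ via the factorisation identity and the action of $\alpha$ yields
\[
    l \cdot m_{\ell_m} \equiv b_1 - b_2 \pmod{o(e_m^{(b_1)})} \qquad (m \ge 1).
\]
For all sufficiently large $m$, we have $|l| m_{\ell_m} + |b_1 - b_2| < \ell_m m_{\ell_m} < o(e_m^{(b_1)})$, which upgrades the congruence to the equation $l m_{\ell_m} = b_1 - b_2$; as $m_{\ell_m} \to \infty$ and $b_1 - b_2$ is fixed, $l = 0$.

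For \eqref{cond:lc}, take $\Bb := \{Z(\lambda) : \lambda \in \Lambda\}$ and, given $\lambda \in \Lambda$, factor $\lambda = \lambda_R \lambda_B$ (red-then-blue); let $l$ be the $\operatorname{lcm}$ of the orders $o(e)$ over blue edges of $\lambda_B$, or the red cycle length at $r(\lambda)$ if $\lambda_B$ is trivial. Then $\alpha^l$ fixes every blue edge of $\lambda_B$ by construction, and because the red cycle length at any vertex $v$ divides the order of every blue edge with range on the same red cycle as $v$, the integer $l m_n$ is a multiple of the cycle lengths along $\lambda_R$, so $\alpha^l$ fixes $\lambda_R$ too. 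Hence $\alpha^{-l}(Z(\lambda)) = Z(\alpha^{-l}(\lambda)) = Z(\lambda)$. Minimality of $G$ is equivalent to cofinality of $\Lambda$, which follows from simplicity of the dimension group $\varinjlim(\ZZ^{c_n}, A_n)$: simplicity implies that for each vertex $v$ there is a level $N$ at which $v$ reaches every vertex of $V_N$ via some blue path, and this immediately gives $v\Lambda x((a, 0)) \ne \emptyset$ for any $x \in \Lambda^\infty$ and all sufficiently large $a$.

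The main obstacle is the factorisation identity itself: correctly tracking how $\Ff$ propagates a blue edge through intervening red edges is the central combinatorial fact, and it is the hypothesis $o(e) > n m_n$---which ensures the orders grow faster than the $\alpha$-scaling $m_n$---that converts the resulting congruences into outright equalities.
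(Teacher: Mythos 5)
Your proof is correct and follows essentially the same route as the paper: the factorisation identity $e_m^{(b_1)}=\Ff^{b_2-b_1}(e_m^{(b_2)})$ is exactly the paper's key observation, and the resulting congruences $lm_t-s\equiv 0 \pmod{o(f_t)}$, upgraded to equalities via $o(e)>nm_n$, give \eqref{wfc} and principality just as in the paper; likewise \eqref{cond:lc} is obtained from the finiteness of the $\alpha$-orbit of $\lambda$. The only cosmetic difference is minimality, where you argue cofinality directly from simplicity of the dimension group while the paper simply cites simplicity of $C^*(\Lambda)$ together with the simplicity--minimality correspondence for \'etale groupoids.
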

\begin{proof}
To establish~\eqref{wfc} and prove that $G$ is principal, fix $x \in \Lambda^\infty$ and
$l\in\ZZ$ such that $[x] = [\alpha^l(x)]$. We claim that $l=0$. The shift-tail
equivalence class of $x$ is given by
\[
[x]:=\{\lambda\sigma^p(x)\colon p\in \NN^2, \lambda\in \Lambda x(p)\}.
\]
In particular $\sigma^p(x)=\sigma^q(\alpha^l(x))$ for some $p,q\in \NN^2$. Let $n$ denote
the level of $r(x)$. The map $\sigma^p$ increases the level of the range of an infinite
path by $p_1$, so $r(\sigma^p(x)) \in V_{n+p_1}$. On the other hand $\alpha$ does not
change the level, so $r(\sigma^q(\alpha^l(x))) \in V_{n+q_1}$, ensuring $p_1=q_1$.
Without loss of generality we may assume that $q_2\geq p_2$ (if not, replace $l$ by $-l$ and
interchange $p,q$). Define $y=\sigma^p(x)$ and $s=q_2-p_2$. Then
$y=\sigma^p(x)=\sigma^{q-p+p}(\alpha^l(x))=\sigma^{(0,s)}\alpha^l(y)$. Now observe that
for $r\in \NN$, $y(re_1,re_1+e_1+e_2)=ef=f'e'$ for some $e,e'\in\Lambda^{e_1}$,
$f,f'\in\Lambda^{e_2}$. It follows that $\sigma^{e_2}y(re_1,re_1 + e_1)=e'$, while
$\Ff(e')=y(re_1,re_1+e_1)$. So for $t\geq n+p_1$, putting $r_t:=(t-n-p_1)e_1$ and
$f_t:=y(r_t,r_t+e_1)$, we have
\[
f_t = y(r_t,r_t+e_1) =(\sigma^{(0,s)}\alpha^l(y))(r_t,r_t+e_1)=\alpha^l(y(r_t,r_t+e_1+se_2)).
\]
Since $y(r_t) \in V_t$, and by definition of $\alpha$, we have
$\alpha^l(y(r_t,r_t+e_1+se_2)) = \Ff^{lm_t}(y(r_t,r_t+e_1+se_2))$, and this in turn is
equal to $\Ff^{lm_t-s}(y(r_t,r_t+e_1))$ because $\Ff^{-s}(y(r_t,r_t+e_1)) =
\sigma^{(0,s)}y(r_t,r_t+e_1)$. Putting all this together, we obtain $f_t = \Ff^{lm_t -
s}(f_t)$.

Hence, for each $t > n + p_1$ we have $lm_t - s  = 0~\mathrm{mod}~o(f_t)$. By assumption,
we have $t\cdot m_t < o(f_t)$ for all $t$, and hence $|lm_t - s| < o(f_t)$ for large $t$.
This forces $lm_t - s=0$ for large $t$. Since $m_t \to \infty$, this forces $l=0$
establishing~\eqref{wfc}. It also forces $s=0$ and hence $p_2=q_2$. So if
$\sigma^p(x)=\sigma^q(x)$ then $p=q$, and this implies that $(x,m,x)\in G$ only for
$m=0$, so $G$ is principal.

To establish~\eqref{cond:lc}, for $\lambda\in\Lambda$ define $Z(\lambda):=\{\lambda{z}
\colon z\in s(\lambda)\Lambda^\infty\}$. Recall that $\Lambda^\infty$ is endowed with the
topology generated by the collection $\Bb:=\{Z(\lambda)\colon \lambda\in \Lambda\}$. Take
any $\lambda \in \Lambda$. We claim that $\alpha^{-l}(Z(\lambda)) \subseteq Z(\lambda)$
for some $l\in\NN\setminus\{0\}$. To see this, let $n,i$ be the pair with $r(\lambda)\in
V_{n,i}$. Since $\Ff$ permutes the elements of each $V_{n,j}$, so does $\alpha$ and hence
each $\alpha^l$ is a permutation of the finite set $V_{n,i}\Lambda^{d(\lambda)}$. In
particular $\alpha^l(\lambda)=\lambda$ for some $l>0$. Hence for each $x\in Z(\lambda)$,
with $x=\lambda{z}$, we have $\alpha^{-l}(x) = \alpha^{-l}(\lambda{z}) =
\alpha^{-l}(\lambda)\alpha^{-l}(z) = \lambda\alpha^{-l}(z)\in Z(\lambda)$.

It remains to check that $G$ is minimal, which follows from \cite[Theorem~5.1]{BCFS}
because $\cs(G)$ is simple by \cite[Theorem~5.1]{PRRS}.
\end{proof}

\begin{proof}[Proof of Theorem~\ref{thm8.1}]
Using Lemma~\ref{lem8.2}, we construct a rank-2 Bratteli diagram $\Lambda$ with
$\Lambda^0=\bigcup_{n=0}^\infty V_{n}$ such that $K_0(C^*(\Lambda))\cong \lim (\ZZ^{c_n},
A_n)$, $K_1(C^*(\Lambda))\cong \lim (\ZZ^{c_n}, B_n)$ and $o(e)>n\cdot m_n$ for every
$n\in\NN$ and $e\in V_n\Lambda^{e_1}$.

Let $G$ denote the path groupoid of $\Lambda$. By construction $G$ is a second countable,
amenable, locally compact, Hausdorff, ample groupoid and $\cs(\Lambda) \cong \cs(G)$
\cite{KP}. By Lemmas \ref{lem8.3}~and~\ref{lem8.4}, there is an automorphism $\alpha$ of
$G$ that satisfies conditions \eqref{wfc}~and~\eqref{cond:lc}, and $G$ is principal and
minimal, $K_*(\iota_{\widetilde{G}})$ is an isomorphism, and $\cs(\tginf)$ is a Kirchberg
algebra in the UCT class.

We now proceed as in the proof of Theorem~\ref{thm:AFeg}. Lemma~\ref{lem:stabilise} shows
that $\widetilde{G} := G \times K$ and $\widetilde{\alpha} := \alpha \times \id$ have all
the properties established for $G$ and $\alpha$ in the preceding paragraphs. We apply
Theorem~\ref{thm:main} to see that $\tginf$ is second-countable, amenable, locally
compact, Hausdorff, ample groupoid that is principal and minimal. Moreover,
$\cs(\widetilde{G})$ is nonunital because $\widetilde{G}^{(0)} = \go\times \ZZ$ is
noncompact.

First suppose that $A$ is nonunital. Then $A$ and $\cs(\tginf)$ are nonunital Kirchberg
algebras with the same $K$-theory:
\[
K_0(A) \cong \varinjlim (\ZZ^{c_n}, A_n)\cong K_0(C^*(\Lambda))\cong K_0(C^*(G)) \cong K_0(C^*(\widetilde{G}))\cong K_0(C^*(\tginf)),
\]
and a similar calculation gives $K_1(A) \cong K_1(C^*(\tginf))$. Hence $A\cong
C^*(\tginf)$ by the Kirchberg--Phillips theorem \cite{Kirchberg, Phillips}.

Now suppose that $A$ is unital. Then by assumption, the isomorphism $K_0(A) \cong
\varinjlim (\ZZ^{c_n}, A_n)$ carries $[1_A]$ to an element of the positive cone, and
hence the class of some $a \in \NN^{c_n} \subseteq \ZZ^{c_n}$. For each $j \le c_n$ chose
$v_j \in V_{n,j}$. As discussed on page~\pageref{pg:isodesc} above, Theorem~6.2(2) of
\cite{PRRS} shows that the isomorphism $\lim (\ZZ^{c_n}, A_n) \cong K_0(\cs(G))$ takes
$a$ to $\sum_{j \le c_n} a(j)[1_{Z(v_j)}]$. So the isomorphism $\varinjlim (\ZZ^{c_n},
A_n) \cong K_0(\cs(\widetilde{G}))$ carries $a$ to $\sum_{j \le c_n} \sum_{k \le a(j)}
[1_{Z(v_j) \times \{k\}}]$, which is the class of a characteristic function $1_V$ of a
clopen subset $V$ of $\widetilde{G}^{(0)}$. So the argument of the final few paragraphs
of the proof of Theorem~\ref{thm:AFeg} shows that $\Gg := W \tginf W$ with $W:=\hio
\times V$ has the desired properties.
\end{proof}

\end{document}